\documentclass[a4paper,11pt,oneside,notitlepage]{article}
\usepackage{times}
\usepackage{lmodern}
\usepackage[T1]{fontenc}
\usepackage[utf8]{inputenc}
\usepackage[english]{babel}
\usepackage{amssymb}
\usepackage{amsmath}
\usepackage{mathtools} 
\usepackage{amsthm}
\usepackage{amsfonts, mathrsfs}
\usepackage{latexsym}
\usepackage[pdftex]{color, graphicx}
\usepackage{makeidx}
\usepackage{sidecap}
\usepackage{verbatim}
\usepackage{geometry}
\usepackage{subfig}
\usepackage{tikz,float}
\usepackage[mathlines, running, switch, columnwise]{lineno} 
\usepackage{comment} 
\usepackage[colorinlistoftodos]{todonotes}
\usepackage{esint} 
\usepackage{pdfsync}
\usepackage{todonotes}
\usepackage{hyperref}

\geometry{a4paper,top=4cm,bottom=3cm,left=3.5cm,right=3cm}
\usetikzlibrary{shapes,arrows,shadows}
\raggedbottom

\setlength{\parindent}{0pt}  
\setlength{\marginparwidth}{2cm}
\setlength{\headheight}{13.59999pt}



\definecolor{vg}{rgb}{0.0, 0.40, 0.15}

\newenvironment{todorev}{\color{cyan}}{\color{black}}
\newcommand{\btodo}{\begin{todorev}}
\newcommand{\etodo}{\end{todorev}}

\newenvironment{samuelerev}{\color{purple}}{\color{black}}
\newcommand{\bsamr}{\begin{samuelerev}}
\newcommand{\esamr}{\end{samuelerev}}

\newenvironment{giacomorev}{\color{red}}{\color{black}}
\newcommand{\bg}{\begin{giacomorev}}
\newcommand{\eg}{\end{giacomorev}}


\newtheorem{theorem}{Theorem}[section]
\newtheorem{proposition}[theorem]{Proposition}
\newtheorem{lemma}[theorem]{Lemma}
\newtheorem{corollary}[theorem]{Corollary}
\newtheorem{definition}[theorem]{Definition}
\newtheorem{remark}[theorem]{Remark}
\newtheorem{example}[theorem]{Example}

\numberwithin{equation}{section}


\def\R{\mathbb{R}}

\def\N{\mathbb{N}}

\def\I{\mathbb{I}_n}

\def\M{\mathcal{M}}

\def\loc{{\rm loc}}

\def\sym{{\rm sym}}
\def\supp{{\rm supp}}

\def\ker{{\rm ker}}
\def\tr{{\rm tr}}

\renewcommand{\div}{\mathrm{div}}
\newcommand{\dom}{\mathrm{dom}}

\newcommand{\T}{\textnormal{T}}

\newcommand{\e}{\varepsilon}
\newcommand{\vphi}{\varphi}
\newcommand{\vrho}{\varrho}
\newcommand{\E}{\mathcal{E}}

\newcommand{\st}{ \ : \ }


\title{Functions of bounded Musielak-Orlicz-type deformation and anisotropic Total Generalized Variation \\ for image-denoising problems}

\author{Giacomo Bertazzoni, Elisa Davoli, Samuele Riccò and Elvira Zappale}

\usepackage[twoside]{fancyhdr}
  \fancyhf{}
  \fancyhead[CO]{G. Bertazzoni, E. Davoli, S. Riccò, E. Zappale} 
  \fancyhead[CE]{Bounded Musielak-Orlicz deformation and anisotropic TGV} 
  \pagestyle{fancy}

\AtEndDocument{	
    \newpage
    
    \hrule \medskip
    
    \small{
        \noindent
        {\sc G. Bertazzoni},
         Dipartimento di Scienze Fisiche, Informatiche e Matematiche, Università degli Studi di Modena e Reggio Emilia, via Campi 213/B, 41125 Modena, Italy
         
        \noindent
        E-mail:
        \href{mailto:giacomo.bertazzoni@unimore.it}{\tt giacomo.bertazzoni@unimore.it}.
    }
   
    \medskip \medskip \medskip \hrule \medskip
    
    \small{
		\noindent
		{\sc E. Davoli},
		Institute of Analysis and Scientific Computing,
		TU Wien, Wiedner Hauptstra\ss e 8-10, 1040 Vienna, Austria.
		
		\noindent
		E-mail:
		\href{mailto:elisa.davoli@tuwien.ac.at}{\tt elisa.davoli@tuwien.ac.at}.
    }
   
    \medskip \medskip \medskip \hrule \medskip
   
    \small{
		\noindent
		{\sc S. Ricc\'o},
		Institute of Analysis and Scientific Computing,
		TU Wien, Wiedner Hauptstra\ss e 8-10, 1040 Vienna, Austria

		\noindent
		E-mail:
		\href{mailto:samuele.ricco@tuwien.ac.at}{\tt samuele.ricco@tuwien.ac.at}.
    }
    
    \medskip \medskip \medskip \hrule \medskip
    
    \small{
		\noindent
		{\sc E. Zappale},
		Dipartimento di Scienze di Base e Applicate per l’Ingegneria, Sapienza - Universit\`{a} di Roma,
via Antonio Scarpa, 16, 00161 Roma, Italy

		\noindent
		E-mail:
		\href{mailto:elvira.zappale@uniroma1.it}{\tt elvira.zappale@uniroma1.it}.
    }
    
    \medskip \medskip \medskip \hrule  
}


\begin{document}

\maketitle

\begin{abstract}
    \noindent
    In the first part of this paper we introduce the space of bounded deformation fields with generalized Orlicz growth. We establish their main properties, provide a modular representation, and characterize a decomposition of the modular into an absolutely continuous part and a singular part weighted via a recession function. A further analysis in the variable exponent case is also provided. The second part of the paper contains a notion of Musielak-Orlicz anisotropic Total Generalized Variation. We establish a duality representation, and show well-posedness of the corresponding image reconstruction problem.
    
    \medskip
    
    \noindent
    {\it 2020 Mathematics Subject Classification:} 26A45, 26B30, 46E35, 46E99, 49J27, 49J45
    
    \smallskip
    
    \noindent
    {\it Keywords and phrases:}
    Functions of bounded deformation, generalized Orlicz spaces, modular, Total Generalized Variation, imaging.
    
\end{abstract}


\section{Introduction}
\label{intro}

Ever since the early 90s, the variational approach has been a cornerstone of the mathematical image reconstruction. Generally speaking, in its simpler formulation, this methodology can be summarized as follows: for a given bounded domain $\Omega \subset \R^2$, grayscale images are identified with functions $u : \Omega \to \R$. Starting from a datum $f$, possibly corrupted by noise, a clean image is obtained through the minimization of an energy functional of the form
\begin{equation*}
    \mathcal{I}(u) := \alpha \|Ku - f\|_H + \beta \, \mathcal{R}(u),
\end{equation*}
for $\alpha,\beta > 0$. The structure of $\mathcal{I}$ relies on the competition between the \emph{fidelity term} $\|Ku - f\|_H$, for a given normed space $H$ and a forward operator $K$, intended to keep track of the original datum $f$, and a \emph{regularizing term} $\mathcal{R}(u)$, whose task is to filter out the noise and return a clean image. The strength of both effects is quantified by the two \emph{tuning parameters} $\alpha$ and $\beta$.
\\
In this paper we propose a novel regularizer based on a Musielak-Orlicz generalization of classical Total Generalized Variation functionals, and provide a first study of the associated space of functions with bounded deformation with generalized Orlicz growth.
\\
Depending on specific applications, various types of variational regularizers have been proposed in the literature. A classical example is the ROF model, where $\mathcal{R}(u)$ is the total variation of $u$, cf. \cite{ROF92}. A known drawback of such functional is the so-called staircasing effect, namely the tendency of producing artificially piecewise-affine artifacts in the image denoising, see \cite{CL, J16}. In order to counteract such problems, several higher-order models have been introduced. A particularly successful one is the second-order Total Generalized Variation, defined as
\begin{equation*}
    TGV^{2}_\alpha(u) := \sup \left\{ \int_\Omega u \, \div^2 \psi \, dx \st \psi \in C^2_c(\Omega; \R^{n \times n}_\sym), \|\psi\|_\infty \le \alpha_1, \|\div \, \psi\|_\infty \le \alpha_2 \right\},
\end{equation*}
for every $u \in L^1_\loc(\Omega)$, with $\alpha = (\alpha_1, \alpha_2) \in \R^+ \times \R^+$. We refer to the seminal paper \cite{BKP10} for a general introduction and higher-order generalizations, and to \cite{BV20} for an in-depth study of the second-order formulation.
\\
In the last few years, due to an increasing interest in accurate and adaptive image reconstruction, an emerging trend has been considering anisotropic models, allowing to tailor the denoising effects to specific features of different parts of the images. First steps in this direction have already been taken, e.g., in \cite{Bl, CL, SC}. In particular, in \cite{CLR} the authors have proposed an anisotropic ROF-type functional with $\mathcal{R}$ of the form $\mathcal{R}(u) = \int_\Omega \phi(x, |Du|) \, dx$ with
\begin{equation*}
    \phi(x, t) :=
    \begin{dcases}
        \tfrac{1}{p(x)} \, t^{p(x)} & \textnormal{ if } t \in [0,1], \\
        t - 1 + \tfrac{1}{p(x)} & \textnormal{ if } t > 1,
    \end{dcases}
\end{equation*}
and with the variable exponent $p : \Omega \to [c,2]$, with $c > 1$.
\\
In parallel, the study of variational problems in variable, Orlicz, and generalized Orlicz spaces has boomed in the past ten years. Among the vast literature, we single out the contributions \cite{Iwona_book,HHbook,HaH, HHL, HHLT, rao-ren} and the references therein.
\\
The extension of the model in \cite{CLR} to incorporate the case $c = 1$, as well as an Orlicz generalization for regularizers $\mathcal{R}(u) \simeq \int_\Omega \vphi(x,|Du|)$ for $\Phi$-functions $\vphi$ possibly having linear growth, is the subject of \cite{EHH}, where a thorough mathematical foundation of the space of anisotropic functions with bounded variation with generalized Orlicz growth has also been provided. A further step in this direction has been taken in \cite{GLM25}, where the associated Euler-Lagrange equations and gradient flows have also been analyzed (see also \cite{GLM25bis}). The effect of an anisotropic norm in the Chan-Vese (or anisotropic Mumford-Shah) is the subject of \cite{MP}. Anisotropic models, at least in the ROF setting, have indeed proven to allow for more accurate computations, as well as for a staircasing reduction and a sharper texture recognition. We refer to \cite{CWSL,EO,LMM} for an overview and further references.
\\
Our contribution is twofold. First, we introduce a novel anisotropic functional setting of functions with bounded deformation, namely the space $BD^\vphi$ of bounded deformation fields with generalized Orlicz growth. After analyzing its basic properties, we establish a link to a modular representation and provide a further study in the special case of variable Lebesgue integrability. 
\\
Second, we analyze a Musielak-Orlicz anisotropic Total Generalized Variation class of regularizers. Let $\Omega$ be a bounded, connected and open set in $\R^n$, with $n \ge 2$. Inspired by \cite{BKP10}, for a fixed generalized weak $\Phi-$function $\vphi \in \Phi_w(\Omega)$ (see Section \ref{orlicz} for the detailed assumptions), we define our anisotropic Total Generalized Variation of order $2$ with weight $\alpha = (\alpha_1, \alpha_2) \in \R^+ \times \R^+$ as
\begin{equation}
\label{TGV}
\begin{aligned}
    TGV^{\vphi,2}_\alpha(u):= \sup\bigg\{ &\int_\Omega u \,\div^2 \psi \, dx \st \\
    & \ \psi \in C^2_c (\Omega; \R^{n \times n}_\sym), \|\psi\|_{\vphi^*} \le \alpha_1, \|\div \, \psi\|_{\vphi^*} \le \alpha_2 \bigg\},
\end{aligned}
\end{equation}
for every $u \in L^1_{\textrm{loc}}(\Omega)$, where $\|\cdot\|_{\vphi^*}$ is the norm associated to the Fenchel conjugate function of $\vphi$, cf. Definition \ref{def:dual}, and where the divergence of order $2$ of $\xi \in \R^{n \times n}_\sym$ is defined as
\begin{equation*}
    \div^2 \xi := \sum_{i = 1}^n \frac{\partial^2 \xi_{ii}}{\partial x_i^2} + 2 \sum_{i<j} \frac{\partial^2 \xi_{ij}}{\partial x_i \partial x_j},
\end{equation*}
(see also \cite{BKP10} for higher-order notions of divergence). Note that, in principle, $TGV^{\varphi,2}(u)$ might take infinite values.

A prototypical example of admissible functions $\vphi$ is given by $\varphi(t) := \tfrac{1}{p(x)} t^{p(x)}$ (see also \cite[Example 4.3]{EHH}), with $p : \Omega \to [1,+\infty]$, thus possibly describing linear, superlinear, or even infinite growth in different portions of the domain.
\\
After studying the basic properties of $TGV^{\vphi,2}_\alpha$, we identify a dual representation, and show how classical existence and stability for classical Total Generalized Variation carry over to our Musielak-Orlicz setting.
\\
The paper is organized as follows. In Section \ref{prelim} we collect some definitions and technical results on $\Phi$-functions and generalized Orlicz spaces. Section \ref{sec:funct_anal} contains the definition and main properties of the novel space of  bounded deformation fields with generalized Orlicz growth $BD^\vphi$. In particular, in Subsection \ref{sec:prop_var_mod} we establish a connection between $BD^{\vphi}$ and an associated weighted-modular representation, while in Subsection \ref{sec:modular} we further establish a decomposition of the modular function in terms of the modular of the absolutely continuous part of the symmetric gradient and a singular part weighted by the recession function, cf. Theorem \ref{thm:exactFormula}. Eventually, in Subsection \ref{sec:BD_px} we specify our setting to the variable exponent case. The focus of Section \ref{sec:imaging} is on our anisotropic Total Generalized Variation. In Subsection \ref{sec:tgv_prop} we establish its main properties, and in Subsection \ref{sec:dual_formulations} we identify its dual reformulation in terms of a notion of anisotropic measure variation. Finally, in Subsection \ref{sec:ex_stab} we show stability with respect to the data and well-posedness of the minimization problem. In Appendices \ref{app:proof_mod} and \ref{AmarLemma} we give the proofs of some auxiliary results we need for the proof of Theorem \ref{thm:exactFormula}.


\section{Preliminary results}
\label{prelim}

In this section, after giving some notation in Subsection \ref{notation}, we collect some definitions and technical results on modulars and norms in Subsection \ref{mod_norm} and on generalized $\Phi$-functions and Orlicz spaces in Subsection \ref{orlicz}.


\subsection{Notation}
\label{notation}

The notation $f \approx g$ means that there exist two constants $c_1, c_2 > 0$ such that $c_1 f \le  g \le c_2 f$. Given an exponent $p \in (1,+\infty)$, we denote by $p'$ its H\"older conjugate exponent, namely $p':=\frac{p}{p-1}$. If $p = 1$, then its H\"older conjugate exponent is $p' = +\infty$ and viceversa. In the sequel $\Omega$ denotes an open subset of $\mathbb R^n$. We denote by $L^0(\Omega)$ the set of measurable functions $f : \Omega \to \R$ and we use the notation $\mathcal{C}^k_c(\Omega; \R^n) := \{ \sigma \in \mathcal{C}^k(\Omega; \R^n) \st \supp(\sigma) \textnormal{ is compact in } \Omega \}$ with $k \in \N \cup \{+\infty\}$. We denote with $|\cdot|$ the Frobenius norm for matrices, i.e. $|M|^2 := \tr(M M^\T)$ where $M \in \R^{n \times n}$, and with $:$ the scalar product associated with the Frobenius norm, i.e. $A : B := \tr(A B^\T)$ for $A,B \in \R^{n \times n}$. With the same notation, namely $|\cdot|$, we also indicate the standard vector Euclidean norm, and with $\cdot$ the scalar product in $\R^n$.
\\
We say that a function $\omega: [0, +\infty) \to [0, +\infty]$ is a \textit{modulus of continuity} if it is increasing and such that $\omega(0) = \lim_{t \to 0^+} \omega(t)=0$. Note that we do not require concavity and we allow extended real values.
\\
We denote by $\R^{n \times n}_\sym$ the space of real-valued $n \times n$ symmetric matrices. By $\M(\Omega;X)$ we denote the space of all finite Radon measures in $\Omega$ with values in $X$, which can be $\R$, $\R^n$ or $\R^{n \times n}_\sym$ depending on the situation. Given a set $O \subset X$, we denote by $I_O$ the indicator function of the set $O$, namely
\begin{equation*}
    I_O(x) :=
    \begin{dcases}
        0 & \textnormal{ if } x \in O, \\
        + \infty & \textnormal{ if } x \notin O,
    \end{dcases}
\end{equation*}
and with $\chi_O$ the characteristic function of the set $O$, namely
\begin{equation*}
    \chi_O(x) :=
    \begin{dcases}
        1 & \textnormal{ if } x \in O, \\
        0 & \textnormal{ if } x \notin O.
    \end{dcases}
\end{equation*}
Given a function $u : \Omega \to \R^m$, we denote by $Du$ its distributional gradient. We indicate by $BV(\Omega;\R^m)$ the space of functions $u : \Omega \to \R^m$ such that $Du \in \M(\Omega;\R^m)$. If $m=1$ we denote the latter space simply by $BV(\Omega)$. When $n=m$, for any function $u : \Omega \to \R^n$ we use the notation $E u$ for the distributional symmetric gradient of $u$, namely $E u = \frac{1}{2} (Du + Du ^ \T)$, where $Du^\T$ stands for the transpose of $Du$. We write
\begin{equation*}
    BD(\Omega) := \{u \in L^1(\Omega;\R^n) \st E u \in \M(\Omega;\R^{n \times n}_\sym) \}.
\end{equation*}
We will often indicate by $c$ a generic constant, whose value may change from line to line.


\subsection{Modulars and norms}
\label{mod_norm}

In order to define the spaces we use in our work, we need to give different notions of norms and modulars that generate our spaces. Note that our terminology differs from the original one in \cite{Mus83}, but follows the lines of \cite{EHH, HJR} in order to get a clearer connection between the modulars and their associated norms.

\begin{definition}
\label{def:norms}
    Let $X$ be a real vector space and let $\|\cdot\|: X \to [0, +\infty]$ be a function on $X$. Consider the following conditions:
    \begin{itemize}
        \item[(N1)] $\|f\| = 0$ if and only if $f = 0$,
        \item[(N2)] $\|af\| = |a| \, \|f\|$ for all $f \in X$ and $a \in \R$,
        \item[(N3)] $\|f + g\| \le \|f\| + \|g\|$ for all $f, g \in X$,
        \item[(N3')] there exists $c > 0$ such that $\|f + g\| \le c \, (\|f\| + \|g\|)$ for all $f,g \in X$.
    \end{itemize}
    We say that $\|\cdot\| : X \to [0, +\infty]$ is
    \begin{itemize}
        \item a quasi-seminorm if and only if it satisfies (N2) and (N3'),
        \item a seminorm if and only if it satisfies (N2) and (N3),
        \item a quasinorm if and only if it satisfies (N1), (N2) and (N3'),
        \item a norm if and only if it satisfies (N1), (N2) and (N3).
    \end{itemize}
\end{definition}

Note that conditions (N2) and (N3), equivalently (N2) and (N3'), already imply that if $f = 0_X$ then $\|f\| = 0$, namely condition (N1) only adds one of the two implications.

\begin{definition}
\label{def:modular}
    Let $X$ be a real vector space. A function $\vrho : X \to [0,+\infty]$ is called a quasi-semimodular on $X$ if and only if
    \begin{itemize}
        \item[(i)] $\vrho(0_X) = 0$,
        \item[(ii)] the function $\lambda \mapsto \vrho(\lambda x)$ is non-decreasing on $[0,+\infty)$ for every $x \in X$,
        \item[(iii)] $\vrho(-x) = \vrho(x)$ for every $x \in X$,
        \item[(iv)] there exists $\beta \in (0,1]$ such that for every $x,y \in X$ and every $\theta \in [0,1]$
        \begin{equation*}
            \vrho\left(\beta(\theta x + (1-\theta) y) \right) \le \theta \vrho(x) + (1-\theta) \vrho(y).
        \end{equation*}
    \end{itemize}
    If (iv) holds with $\beta = 1$, then $\vrho$ is called semimodular. A quasi-semimodular is called a quasimodular when $\vrho(x) = 0$ if and only if $x = 0_X$. A modular is defined analogously from a semimodular.
\end{definition}

\begin{definition}
\label{def_modular_space}
    Let $X$ be a real vector space. If $\vrho$ is a quasi-semimodular in $X$, then the modular space
    \begin{equation*}
        X_\vrho := \{x \in X \st \|x\|_\vrho < +\infty\}
    \end{equation*}
    is defined by the quasi-seminorm
    \begin{equation*}
        \|x\|_\vrho := \inf \left\{\lambda > 0  \st \vrho \left(\frac{x}{\lambda}\right) \le 1 \right\}.
    \end{equation*}
\end{definition}


\subsection{Generalized \texorpdfstring{$\Phi$}{}-functions and Orlicz spaces}
\label{orlicz}

In this subsection we give some definitions regarding $\Phi$-functions and we then define generalized Orlicz (or Musielak-Orlicz) spaces, following the lines of \cite{HaH}. Since we are also going to make use of singular measures, the assumptions always need to get adjusted to hold for every point, as in \cite{HJR}.

\begin{definition}
    A function $f : \Omega \to \R$ is called $L$-almost increasing if there exists $L \ge 1$ such that $f(s) \le L f(t)$ for every $s \le t$. If this inequality holds for $L = 1$, the function is called non-decreasing. We define analogously $L$-almost decreasing and non-increasing functions.
\end{definition}

\begin{definition}
\label{def:phi-functions}
    We say that a function $\vphi: \Omega \times [0, +\infty) \to [0, +\infty]$ is a weak $\Phi$-function, and we write $\vphi \in \Phi_w(\Omega)$, if the following conditions hold:
    \begin{itemize}
        \item the function $x \mapsto \vphi(x,|f(x)|)$ is measurable for every function $f \in L^0(\Omega)$,
        \item the function $t \mapsto \vphi(x,t)$ is non-decreasing for every $x \in \Omega$,
        \item $\vphi(x,0) = \underset{t \to 0^+}{\lim} \vphi(x,t) = 0$ and $\underset{t \to +\infty}{\lim} \vphi(x,t) = +\infty$ for every $x \in \Omega$,
        \item the function $t \mapsto \tfrac{\vphi(x, t)}{t}$ is $L$-almost increasing on $(0,+\infty)$ for every $x \in \Omega$ with a constant $L$ independent of $x$.
    \end{itemize}
    If, moreover, $\vphi \in \Phi_w(\Omega)$ is convex and left-continuous with respect to $t$ for every $x \in \Omega$, then $\vphi$ is a convex $\Phi$-function, and we write $\vphi \in \Phi_c(\Omega)$. If $\vphi$ does not depend on $x$, then we omit the set and write $\vphi \in \Phi_w$ or $\vphi \in \Phi_c$.
\end{definition}

Note that since we deal with conjugates of functions possibly with linear growth at infinity, it is needed to allow for extended real-valued $\Phi$-functions. Now we are in the position of defining generalized Orlicz spaces.

\begin{definition}
    Let $\vphi \in \Phi_w(\Omega)$ and define
    \begin{equation*}
        \vrho_\vphi(f) := \int_\Omega \vphi (x,|f|) \, dx \qquad \textnormal{for every } f \in L^0(\Omega).
    \end{equation*}
    Then, the set
    \begin{equation*}
    L^\vphi(\Omega) := (L^0(\Omega))_{\vrho_\vphi} = \left\{ f \in L^0(\Omega) \st \vrho_\vphi(\lambda  f) < +\infty \quad \textnormal{ for some } \lambda > 0 \right\}
    \end{equation*}
    is called a \textit{generalized Orlicz space} with quasinorm given by $\|f\|_\vphi := \|f\|_{\vrho_\vphi}$. We use the abbreviation $\|f\|_\vphi := \left\| |f| \right\|_\vphi$ for vector-valued functions.
\end{definition}

Thanks to \cite[Lemma 3.2.2]{HaH}, it is known that $\|\cdot\|_\vphi$ is a quasinorm in $L^\vphi(\Omega)$ if $\vphi \in \Phi_w(\Omega)$, and a norm if $\vphi \in \Phi_c(\Omega)$.
\\
Now we need to define some standard conditions on $\Phi$-functions that grant them additional properties, which sometimes are inherited by their conjugate functions as well.

\begin{definition}
\label{prop_Orlicz}
    Let $\vphi : \Omega \times [0,+\infty) \to [0,+\infty]$ and let $p,q > 0$. Then we define the following conditions:
    \begin{itemize}
        \item[(A0)] There exists $\beta \in (0,1]$ such that $\vphi(x,\beta) \le 1 \le \vphi(x, \tfrac{1}{\beta})$ for every $x \in \Omega$,
        
        \item[(A1)] For every $K > 0$ there exists $\beta \in (0,1]$ such that for every $x,y \in \Omega$
        \begin{equation*}
            \vphi(x,\beta t) \le \vphi(y,t) + 1 \quad \textnormal{when } \vphi(y, t) \in \left[0, \tfrac{K}{|x-y|^n}\right],
        \end{equation*}
        
        \item[(VA1)] For every $K > 0$ there exists a modulus of continuity $\omega$ such that for every $x,y \in \Omega$
        \begin{equation*}
            \vphi \left(x,\frac{t}{1 + \omega(|x-y|)}\right) \le \vphi(y,t) + \omega(|x-y|) \quad \textnormal{when } \vphi(y, t) \in \left[0, \frac{K}{|x-y|^n}\right],
        \end{equation*}
        
        \item[(aInc)$_p$] There exists $L_p \ge 1$ such that $t \mapsto \tfrac{\vphi(x,t)}{t^p}$ is $L_p$-almost increasing on $(0,+\infty)$ for every $x \in \Omega$,
        
        \item[(aDec)$_q$] There exists $L_q \ge 1$ such that $t \mapsto \tfrac{\vphi(x,t)}{t^q}$ is $L_q$-almost decreasing on $(0,+\infty)$ for every $x \in \Omega$.
    \end{itemize}
    We say that (aInc), respectively (aDec), holds if (aInc)$_p$, respectively (aDec)$_q$, holds for some $p > 1$, respectively $q > 1$.
\end{definition}

\begin{remark}
\label{rmk:prop_ainc}
    Note that, by Definition \ref{def:phi-functions}, every function $\vphi \in \Phi_w(\Omega)$ satisfies (aInc)$_1$.
\end{remark}

Note that if $\vphi \in \Phi_w(\Omega)$, then $\vphi(\, \cdot \, ,1) \approx 1$ implies (A0). Moreover, if $\vphi$ satisfies (aDec), then (A0) and $\vphi(\,\cdot\, ,1) \approx 1$ are equivalent. Assumption (A1) is an almost continuity condition and (aInc) and (aDec) are quantitative versions of the $\nabla_2$ and $\Delta_2$ conditions and measure lower and upper growth rates. We define the condition (A1) as in \cite{EHH}. We point out that it slightly differs from \cite{HaH, H15}, where it is assumed that 
\begin{equation*}
    \vphi(x,\beta t) \le \vphi(y,t) \quad \textnormal{when } \vphi(y, t) \in \left[1, \tfrac{1}{|B|}\right]
\end{equation*}
and $x$ and $y$ belong to the ball $B$. Lastly, the “vanishing (A1)” condition (VA1) is a continuity condition for $\vphi$, introduced in \cite{HOk} to prove maximal regularity of minimizers.
\\
In order to state the last property which we need for our analysis, we first need to define the recession function of $\vphi$.

\begin{definition}
\label{def:recession}
    Let $\vphi \in \Phi_w(\Omega)$. We call recession function of $\vphi$ the function $\vphi^\infty : \Omega \to [0,+\infty]$ defined by
    \begin{equation*}
        \vphi^\infty(x) := \limsup_{t \to +\infty} \frac{\vphi(x,t)}{t} \qquad \textnormal{for all } x \in \Omega.
    \end{equation*}
\end{definition}

Note that, since the continuity of $\vphi$ is not necessarily inherited by $\vphi^\infty$, this makes the non-autonomous case not trivial. For our analysis, we also need the following weaker version of (VA1) where at least one of the points has to belong to the set $\{\vphi^\infty < +\infty\}$.

\begin{definition}\label{restrVA1def}
    Let $\vphi \in \Phi_w(\Omega)$. We say that $\vphi$ satisfies restricted (VA1) if and only if it satisfies (A1) and for every $K > 0$ there exists $\omega$ modulus of continuity such that
    \begin{equation*}
            \vphi \left(x, \frac{t}{1 + \omega(|x-y|)}\right) \le \vphi(y,t) + \omega(|x-y|) \quad \textnormal{when } \vphi(y, t) \in \left[0, \frac{K}{|x-y|^n}\right],
        \end{equation*}
        for every $x,y \in \Omega$ such that $\vphi^\infty(x) < +\infty$ or $\vphi^\infty(y) < +\infty$.
\end{definition}

We now need to define the associate space of the generalized Orlicz space $L^\vphi(\Omega)$. In general, the associate space is a variant of the dual function space which works better at the end-points $p = 1$ and $p = \infty$. 
\begin{definition}
\label{def:dual}
    Let $\vphi \in \Phi_w(\Omega)$. We define the associate space $(L^\vphi)'(\Omega)=\{u \in L^0(\Omega):  \|u\|_{(L^\vphi)'}<+\infty\}$ with
    \begin{equation*}
        \|u\|_{(L^\vphi)'} := \sup_{\|v\|_\vphi \le 1} \int_\Omega uv \, dx.
    \end{equation*}
\end{definition}

Thanks to \cite[Theorem 3.4.6]{HaH}, it is well know that for weak $\Phi$-functions the associate space corresponds to the generalized Orlicz space $L^{\vphi^*}(\Omega)$, where
\begin{equation*}
    \vphi^*(x, t) := \sup_{s \ge 0} [st - \vphi(x, s)]
\end{equation*}
is the (Fenchel) conjugate function of $\vphi \in \Phi_w(\Omega)$. Moreover, it can be shown that (see \cite[Theorem 3.4.6]{EHH}) for all $u \in L^0(\Omega)$, $\|u\|_{L^{\varphi}}$  is equivalent to $\sup_{\|v\|_{L^{\vphi^*}} \le 1} \int_\Omega uv \, dx.$

Let us state some well-known properties of the conjugate function.

\begin{lemma}
\label{lmm:prop_conj}
    Let $\vphi \in \Phi_w(\Omega)$. Then,
    \begin{itemize}
        \item[(i)] $\vphi^* \in \Phi_c(\Omega)$, meaning that $\vphi^*$ is always convex and left-continuous,
        
        \item[(ii)] If $p,q \in (1,+\infty)$, then $\vphi$ satisfies (aInc$_p$) or (aDec$_q$) if and only if $\vphi^*$ satisfies respectively (aDec$_{p'}$) or (aInc$_{q'}$),
        
        \item[(iii)] If $\vphi \in \Phi_c(\Omega)$, then
        \begin{equation*}
            \vphi^*\left(x, \tfrac{\vphi(x, t)}{t}\right) \le \vphi(x, t) \quad \textnormal{for every } x \in \Omega \textnormal{ and } t \in \R^+,
        \end{equation*}
        and $\vphi^{**} = \vphi$,
        
        \item[(iv)] If $\vphi$ satisfies (A0) or (A1), then so does $\vphi^*$.
    \end{itemize}
    \qed
\end{lemma}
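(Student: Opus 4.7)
All four items are classical facts of convex analysis and generalized Orlicz theory; the task is to verify that the classical conjugation identities carry over pointwise in $x\in\Omega$ and that the $\Phi_c(\Omega)$ structure is preserved. Throughout I would work from the defining supremum $\vphi^*(x,t)=\sup_{s\ge 0}[st-\vphi(x,s)]$ and, where convenient, cite the corresponding statements from \cite{HaH}.

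For (i), convexity and lower semicontinuity of $t\mapsto\vphi^*(x,t)$ are immediate, since a pointwise supremum of affine (hence convex and continuous) functions is convex and lsc; combined with monotonicity in $t$ (obvious from $s\ge 0$), lsc yields left-continuity on $[0,+\infty)$. The boundary identity $\vphi^*(x,0)=0$ comes from $\vphi(x,\cdot)\ge 0$ and $\vphi(x,0)=0$, while $\lim_{t\to 0^+}\vphi^*(x,t)=0$ follows from (aInc)$_1$ (Remark \ref{rmk:prop_ainc}), which bounds $\vphi^*(x,s)/s$ near $0$ by a constant times $\vphi^*(x,t_0)/t_0$ for a suitable $t_0>0$. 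Divergence $\vphi^*(x,t)\to+\infty$ as $t\to+\infty$ is a consequence of $\vphi^*(x,t)\ge st-\vphi(x,s)$ for any fixed $s$. Measurability of $x\mapsto\vphi^*(x,|f(x)|)$ reduces, via monotonicity in the second argument, to taking the supremum over a countable dense set of $s$'s, each term being measurable by the $\Phi_w$ property of $\vphi$ applied with the constant function $f\equiv s$. Finally, (aInc)$_1$ for $\vphi^*$ is automatic by Remark \ref{rmk:prop_ainc}.

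For (ii), the aInc/aDec duality under Fenchel conjugation is classical; see \cite{HaH}. Concretely, assuming (aInc$_p$) for $\vphi$ with $p>1$, I would use the rescaling $s\mapsto\lambda s$ with $\lambda=(t_1/t_2)^{1/(p-1)}$ inside the supremum defining $\vphi^*(x,t_2)$: the factor coming from almost-monotonicity of $s\mapsto\vphi(x,s)/s^p$ combines with $\lambda t_2$ to yield the bound $\vphi^*(x,t_2)/t_2^{p'}\le L'\,\vphi^*(x,t_1)/t_1^{p'}$ for $t_1\le t_2$, that is (aDec$_{p'}$). The reverse implication, and the (aDec$_q$)$\Leftrightarrow$(aInc$_{q'}$) pairing, follow from the same computation applied to $\vphi^{**}=\vphi$, itself part of (iii). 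For (iii), the Young-type inequality is a direct computation: since $\vphi(x,\cdot)$ is convex with $\vphi(x,0)=0$, the map $s\mapsto\vphi(x,s)/s$ is non-decreasing on $(0,+\infty)$. Splitting the supremum defining $\vphi^*(x,\vphi(x,t)/t)$ at $s=t$, for $s\le t$ one has $s\vphi(x,t)/t\le\vphi(x,t)$ and $-\vphi(x,s)\le 0$, so the bracket is at most $\vphi(x,t)$; for $s>t$ the monotonicity gives $s\vphi(x,t)/t\le\vphi(x,s)$, so the bracket is non-positive. The identity $\vphi^{**}=\vphi$ is Fenchel-Moreau applied pointwise in $x$ to the canonical extension of $\vphi(x,\cdot)$ by $+\infty$ to $(-\infty,0)$, legitimate since the extended function is convex, lsc, proper and vanishes at $0$.

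For (iv), I would invoke \cite{HaH}; this is, in my view, the main obstacle. Preservation of (A0) is straightforward: Young's inequality combined with the estimates $\vphi(x,\beta)\le 1$ and $\vphi(x,1/\beta)\ge 1$ gives, after a short one-variable computation, a dual constant (essentially a power of $\beta$) witnessing (A0) for $\vphi^*$. Preservation of (A1) is considerably more delicate: one inserts the (A1) inequality inside the supremum defining $\vphi^*$ and reabsorbs the additive error $+1$ through a multiplicative rescaling of the argument, but one has to track carefully how the threshold condition $\vphi(y,t)\in[0,K|x-y|^{-n}]$ transforms under conjugation, given that the present formulation of (A1) differs slightly from the one in \cite{HaH}. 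I would follow the proof there verbatim, checking that the adaptation to the present formulation does not alter any of the constants in a way that breaks the final estimate.
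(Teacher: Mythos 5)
Your proposal is correct and essentially matches the paper's approach: the paper simply cites \cite{HaH} (Lemma~2.4.1, Proposition~2.4.9, Lemmas~3.7.6 and~4.1.7) and \cite[Cor.~2.6.3]{HHbook} for the four items, and your sketches of (i)--(iii) are faithful reconstructions of those proofs, with (iv) correctly deferred to \cite{HaH} just as the paper does. One small circularity worth cleaning up in (i): you invoke Remark~\ref{rmk:prop_ainc} to obtain (aInc)$_1$ for $\vphi^*$ and to deduce $\lim_{t\to 0^+}\vphi^*(x,t)=0$, but that remark presupposes $\vphi^*\in\Phi_w(\Omega)$, which is the very membership you are in the process of verifying; the clean route is to first note that convexity of $\vphi^*(x,\cdot)$ together with $\vphi^*(x,0)=0$ directly gives that $t\mapsto \vphi^*(x,t)/t$ is non-decreasing (so (aInc)$_1$ holds with $L=1$), and then use finiteness of $\vphi^*(x,\cdot)$ near $0$ (a consequence of (aInc)$_1$ for $\vphi$ and $\vphi(x,\cdot)\to+\infty$) to conclude $\vphi^*(x,t)\to 0$. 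A similar care is needed in (ii) where you appeal to $\vphi^{**}=\vphi$, which under the standing hypothesis $\vphi\in\Phi_w(\Omega)$ only holds up to the equivalence $\vphi^{**}\simeq\vphi$; this is enough since (aInc)/(aDec) are stable under such equivalence, but it should be said.
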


Conditions (i), (ii) and (iv) have been proven in \cite{HaH}, respectively in Lemma 2.4.1, Proposition 2.4.9 and Lemmas 3.7.6 and 4.1.7, while condition (iii) has been shown in \cite[Corollary 2.6.3]{HHbook}, see also \cite[Proposition 2.4.5]{HaH}.
\\
Lastly, we report the following result regarding some immersions of generalized Orlicz spaces into Lebesgue spaces.

\begin{lemma}
\label{lmm:inclusions}
    Let $\Omega \subset \R^n$ be such that $|\Omega| < +\infty$ and let $\vphi \in \Phi_w(\Omega)$. If $\vphi$ satisfies (A0), then
    \begin{equation*}
        L^\infty(\Omega) \hookrightarrow L^\vphi(\Omega) \hookrightarrow L^1(\Omega).
    \end{equation*}
    Moreover, if $\vphi$ satisfies (aInc)$_p$, then $L^\vphi(\Omega) \hookrightarrow L^p(\Omega)$.
    \qed
\end{lemma}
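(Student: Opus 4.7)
The plan is to verify each of the three inclusions by comparing the modular $\vrho_\vphi$ directly to the Lebesgue modulars, using only the pointwise bounds provided by (A0) together with the $L$-almost increasing property of $t\mapsto\vphi(x,t)/t$ that is built into the definition of $\Phi_w(\Omega)$ (cf. Remark \ref{rmk:prop_ainc}). Convexity of $\vphi$ is not available in this setting, so this almost-monotonicity of the ratio is the essential replacement tool.

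For $L^\infty(\Omega)\hookrightarrow L^\vphi(\Omega)$, let $\beta\in(0,1]$ be as in (A0) and fix $f\in L^\infty(\Omega)$. Choosing $\lambda:=\|f\|_\infty/\beta$, the monotonicity of $\vphi(x,\cdot)$ together with $\vphi(x,\beta)\le 1$ yields $\vrho_\vphi(f/\lambda)\le|\Omega|$. If $|\Omega|\le 1$ we conclude $\|f\|_\vphi\le\lambda$ directly. Otherwise, I would rescale by $c>1$ and use the almost-increasing property of $t\mapsto\vphi(x,t)/t$ in the form $\vphi(x,t/c)\le (L/c)\vphi(x,t)$ to obtain $\vrho_\vphi(f/(c\lambda))\le L|\Omega|/c$; choosing $c=L|\Omega|$ gives $\|f\|_\vphi\le L|\Omega|\|f\|_\infty/\beta$.

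For $L^\vphi(\Omega)\hookrightarrow L^1(\Omega)$, fix $\lambda>0$ with $\vrho_\vphi(f/\lambda)\le 1$ and split $\Omega$ along the threshold coming from (A0). On $\{|f|\le \lambda/\beta\}$ the estimate $|f|\le \lambda/\beta$ contributes at most $\lambda|\Omega|/\beta$ to $\|f\|_1$. On $\{|f|>\lambda/\beta\}$, I apply the almost-increasing property of $t\mapsto\vphi(x,t)/t$ to the pair $s=1/\beta<t=|f|/\lambda$; combined with $\vphi(x,1/\beta)\ge 1$ from (A0) this gives the pointwise inequality $|f|/\lambda\le (L/\beta)\vphi(x,|f|/\lambda)$, whose integral is bounded by $L/\beta$. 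Summing and taking the infimum over admissible $\lambda$ yields $\|f\|_1\le (|\Omega|+L)/\beta\,\|f\|_\vphi$.

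The third embedding follows the same dichotomy, replacing the ratio $\vphi(x,t)/t$ by $\vphi(x,t)/t^p$ and (aInc)$_1$ by (aInc)$_p$. On $\{|f|\le \lambda/\beta\}$ one has $|f|^p\le(\lambda/\beta)^p$; on the complement the almost-increasing property of $\vphi(x,\cdot)/(\cdot)^p$ at $s=1/\beta$ and $t=|f|/\lambda$, together with $\vphi(x,1/\beta)\ge 1$, gives $(|f|/\lambda)^p\le (L_p/\beta^p)\vphi(x,|f|/\lambda)$, and integration produces $\|f\|_p\le ((|\Omega|+L_p)/\beta^p)^{1/p}\|f\|_\vphi$. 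The only delicate point throughout is to avoid invoking convexity of $\vphi$ — which is why every key step routes through the almost-monotone ratios — but this is fully dictated by the (A0) and (aInc) structure, so no serious obstacle is expected.
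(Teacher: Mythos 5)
Your proof is correct. The paper does not prove Lemma~\ref{lmm:inclusions} itself but only refers to \cite[Corollaries 3.7.9 \& 3.7.10]{HaH}, so there is no in-text argument to compare against; what you supply is a self-contained verification at the modular level. Your route is the standard one for weak $\Phi$-functions: the first embedding chooses $\lambda=\|f\|_\infty/\beta$ so that $\vphi(\cdot,|f|/\lambda)\le\vphi(\cdot,\beta)\le 1$, then normalizes the bound $\vrho_\vphi(f/\lambda)\le|\Omega|$ by a factor $c=\max\{1,L|\Omega|\}$ using (aInc)$_1$ in the form $\vphi(x,t/c)\le(L/c)\vphi(x,t)$; the second and third embeddings split at the (A0) threshold $|f|=\lambda/\beta$ and use (aInc)$_1$, respectively (aInc)$_p$, together with $\vphi(\cdot,1/\beta)\ge 1$ to get the pointwise bound $(|f|/\lambda)^p\le (L_p/\beta^p)\,\vphi(x,|f|/\lambda)$ on the exceptional set, and then pass to the Luxemburg norm via $\lambda\downarrow\|f\|_\vphi$. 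All steps are legitimate under the stated hypotheses and correctly avoid any appeal to convexity, which is indeed unavailable for $\vphi\in\Phi_w(\Omega)$; the only cosmetic point is that your constant in the first embedding could be written uniformly as $\max\{1,L|\Omega|\}/\beta$ to dispense with the dichotomy on $|\Omega|$.
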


Note that the second continuous inclusion comes from the fact that every weak $\Phi$-function satisfies (aInc)$_1$, see Remark \ref{rmk:prop_ainc}. This Lemma was shown in \cite[Corollary 3.7.9 \& Corollary 3.7.10]{HaH}. We point out that condition (A0) is essential both for establishing the above embeddings and to ensure that $L^\vphi(\Omega)$ is a Banach space, cf. \cite[Theorem 3.7.13]{HaH}. In particular, of (A0) is violated, the first inclusion in Lemma \ref{lmm:inclusions} might fail to hold. We refer to \cite[Example 3.7.11]{HaH} for a counterexample.


\section{The space of bounded deformations fields with generalized Orlicz growth}
\label{sec:funct_anal}

In \cite[Definition 4.1]{EHH}, given a function $\vphi \in \Phi_w(\Omega)$, an anisotropic total variation (there called {\it dual norm}) is introduced exploiting the conjugate function of $\vphi$, namely for any $u \in L^1(\Omega)$ we have
\begin{equation}
\label{def:EHH_TV}
    V_\vphi(u) := \sup \left\{ \int_\Omega u \, \div \, w \, dx \st w \in C^1_c(\Omega; \R^n), \|w\|_{\vphi^*} \le 1 \right\}.
\end{equation}
Moreover, the space of functions of bounded variation with generalized Orlicz growth is defined as
\begin{equation}
\label{def:BV_phi}
    BV^\vphi(\Omega) := \{ u \in L^\vphi(\Omega) \st \|u\|_{BV^\vphi(\Omega)} < +\infty \},
\end{equation}
where
\begin{equation}
\label{norm_BVphi}
    \|u\|_{BV^\vphi(\Omega)} := \|u\|_\vphi + V_\vphi(u).
\end{equation}
Equivalently, given $\vphi \in \Phi_w(\Omega)$ in \cite{BHHZ} a vectorial anisotropic total variation is introduced, namely for any $m \in \N^+$ and $u \in L^1_\loc(\Omega;\R^m)$ we have
\begin{equation}
\label{var_BHHZ}
    V^m_\vphi(u) := \sup \left \{\sum_{i = 1}^m \int_\Omega u_i \, \div \, w_i \, dx \st w \in \left[C^1_c(\Omega; \R^n)\right]^m, \|w\|_{\vphi^*} \le 1 \right\}.
\end{equation}
One can equivalently write
\begin{equation*}
    V^m_\vphi(u) = \sup \left \{ \int_\Omega u \cdot \div \, w \, dx \st w \in C^1_c(\Omega; \R^{m \times n}), \|w\|_{\vphi^*} \le 1 \right\},
\end{equation*}
where the divergence $\div \, w$ is a vector in $\R^m$, namely taken a matrix-valued field $\xi$, it is computed row-wise, i.e.
\begin{equation}
\label{divrow}
    (\div \, \xi)_i :=  \div \, \xi_i = \sum_{j = 1}^n \frac{\partial \xi_{ij}}{\partial x_j},
\end{equation}
for any $i \in [1,m] \cap \N$ and any $\xi : \Omega \to \R^{m \times n}$. Moreover, we say that $u$ belongs to $BV^\vphi(\Omega;\R^m)$ if
\begin{equation*}
    \|u\|_{BV^\vphi(\Omega;\R^m)} := \|u\|_\vphi + V_\vphi^m(u) < +\infty.
\end{equation*}
In the case $m = 1$, \eqref{var_BHHZ} reduces to \eqref{def:EHH_TV} and we denote $BV^\vphi(\Omega;\R)$ simply by $BV^\vphi(\Omega)$.
\\
Note that in general Harjulehto \& H\"ast\"o in their works, see e.g. \cite{HaH}, use the symbol $C^1_0(\Omega; \R^n)$ to denote the space $C^1_c(\Omega; \R^n)$. For this reason, we gave the definition of the total variation with our notation instead of the one used in \cite{EHH}.

\begin{remark}
\label{rmk:A0_si_no}
    Clearly, when $\vphi(x,t) = |t|$, $BV^\varphi(\Omega;\R^m)= BV(\Omega;\R^m)$. A priori, without assuming any other property on $\vphi$ except for it to be in the space $\Phi_w(\Omega)$, the spaces $BV(\Omega)$ and $BV^\vphi(\Omega)$ are not related to each other. Considering $\Omega \subset \R^n$ bounded, one gets the continuous inclusion $BV^\vphi(\Omega) \hookrightarrow BV(\Omega)$ assuming (A0) on the function $\vphi$, see Lemma \ref{lmm:inclusions}. The fact that $\vphi$ satisfies (aInc)$_1$ is already included in the definition of weak $\Phi$-function, see Remark \ref{rmk:prop_ainc}.
\end{remark}

\begin{remark}
    In \cite{AB} and \cite{M05}, in the simple anisotropic case, (i.e. $\varphi(x,t)$ with linear behavior in $t$) the dual function $\vphi^0$ is used instead of the conjugate function $\vphi^*$ in order to define a weighted total variation (they call it {\it generalized total variation}). But under the assumptions considered in \cite{AB} and \cite{M05}, the two definitions coincide. Indeed, let us assume that $\vphi: \Omega \times [0, +\infty) \to [0,+\infty)$ is a continuous function which is convex in the second variable, satisfies (A0) and such that
    \begin{equation*}
        \vphi(x,t) = t \, \vphi(x,1) \quad \textnormal{ for any } x \in \Omega \textnormal{ and } t \in [0, +\infty).
    \end{equation*}
    Assume that there exist $0 < \lambda \le \Lambda < +\infty$ such that 
    \begin{equation*}
        \lambda t \le \vphi(x,t) \le \Lambda t \qquad \textnormal{ for any } x \in \Omega \textnormal{ and for any } t \in [0, +\infty).
    \end{equation*}
    Moreover, let us consider $\psi : \Omega \times \R^n \to [0, +\infty)$ defined as $\psi(x,\xi) := \vphi(x, |\xi|)$ for any $x \in \Omega$ and $\xi \in \R^n$. Note that $\psi$ is continuous, convex in the second variable, and such that
    \begin{equation*}
        \psi(x, t \xi) = \vphi(x, |t \xi|) = |t|\vphi (x, |\xi|) = |t| \psi(x,\xi)
    \end{equation*}
    and
    \begin{equation*}
        \lambda |\xi|  \le \psi(x, \xi) \le \Lambda |\xi|,
    \end{equation*}
    for every $t \in \R$, $\xi \in \R^n$ and every $x \in \Omega$.
    \\
    In \cite{AB}, considered $u \in BV(\Omega)$ and under these assumptions on $\psi$, the anisotropic total variation (there called {\it generalized total variation of $u$ with respect to $\psi$ in $\Omega$}) has been characterized, see in particular \cite[Proposition 3.2]{AB}, as
    \begin{equation}
    \label{Vpsieq}
        \overline{V_\psi}(u) := \sup \left\{\int_\Omega u \, \div w \, dx \st w \in C^1_c(\Omega;\R^n), \psi^0(x,w(x)) \le 1 \textnormal{ for all } x \in \Omega \right\}, 
    \end{equation}    
    where $\psi^0$ is the dual function of $\psi$, namely
    \begin{equation*}
        \psi^0(x, \xi^*) := \sup \{(\xi^*, \xi) \st \xi \in \R^n, \psi(x, \xi) \le 1\}.
    \end{equation*}
    Note that in \cite{AB} they use the notation $C^1_0(\Omega;\R^n)$ to indicate the space that we denote with $C^1_c(\Omega;\R^n)$. Then, given a function $u \in BV(\Omega)$, the anisotropic total variation of $u$ with respect to $\vphi$ in $\Omega$ introduced in \cite{EHH} as \eqref{def:EHH_TV}, coincides with the notion given by \cite{AB}, namely \eqref{Vpsieq}.
    \\
    Indeed, by \cite[(5.10)]{AB} the right-hand side of \eqref{Vpsieq} coincides with
    \begin{equation}
    \label{Vpsieq2}
        \sup\left\{\int_\Omega u \, \div \, w \, dx \st w \in C^1_c(\Omega;\R^n), \int_\Omega \psi^*(x, w(x)) \, dx < +\infty \right\}.
    \end{equation}
    Considering the even extension of $\vphi(x,\cdot)$, for any $x \in \Omega$, and since $\vphi(x,\cdot)$ is convex, in view of our definition of $\psi$ and Lemma \ref{lemmaApp}, which guarantees that
    \begin{equation}
    \label{psi*varphi*mod}
        \psi^*(x,\cdot)= \vphi^*(x,|\cdot|) \qquad \textnormal{ for a.e. } x \in \Omega,
    \end{equation}
    we have that \eqref{Vpsieq2} is equal to
    \begin{equation}
    \label{Vpsieq3}
        \sup \left\{\int_\Omega u \, \div \, w \, dx \st w \in C^1_c(\Omega;\R^n), \int_\Omega \vphi^*(x, |w(x)|) \, dx < +\infty \right\}.
    \end{equation}
    Taking into account \cite[(2.18)]{AB}, namely
    \begin{equation*}
        \psi^*(x, \xi^*) =
        \begin{dcases}
            0 & \textnormal{ if } \psi^0(x, \xi^*) \le 1, \\
            +\infty & \textnormal{ if } \psi^0(x, \xi^*) > 1,
        \end{dcases}
    \end{equation*}
    for every $x \in \Omega$ and every $\xi^* \in \R^n$, we have that \eqref{Vpsieq3} can be equivalently written as
    \begin{equation}
    \label{Vpsieq4}
        \sup \left\{\int_\Omega u \, \div \, w \, dx \st w \in C^1_c(\Omega;\R^n), \int_\Omega \vphi^*(x, |w(x)|) \, dx \le 1 \right\},
    \end{equation}
    where \eqref{psi*varphi*mod} has been exploited again. Now, using the definition
    \begin{equation*}
        \|w\|_{\vphi^*} := \inf \left\{\lambda > 0 \st \int_\Omega \vphi^* \left(x, \left|\frac{w(x)}{\lambda} \right| \right) \, dx < 1 \right\},
    \end{equation*}
    we get that \eqref{Vpsieq4} is equal to \eqref{def:EHH_TV}.
\end{remark}

In order to treat the minimization problem using $TGV^{\vphi,2}_\alpha$ as a regularizer, see \eqref{TGV}, we need to introduce a variant of the generalized variation associated to $\vphi$ defined in \eqref{def:EHH_TV}. For every $v \in \M(\Omega;\R^m)$ with $m \in \N^+$, we define
\begin{equation}
\label{V_tilde}
    \widetilde{V}^m_\vphi(v) := \sup \left\{\int_\Omega \psi \cdot dv(x) \st \psi \in C^1_c(\Omega;\R^m), \|\psi\|_{\vphi^*} \le 1 \right\}.
\end{equation}
If we are working on a set $A \subsetneq \Omega$, then we will denote it by $\widetilde{V}^m_\vphi(v;A)$.

\begin{remark}
\label{rk:v-equiv}
    Note that for any $u \in BV(\Omega)$, in order to know that at least that $Du \in \M(\Omega;\R^n)$, an integration by parts yields
    \begin{equation*}
        \widetilde{V}^n_\vphi(Du) = V_\vphi(u),
    \end{equation*}
    where the second term is defined as in \eqref{def:EHH_TV}, namely the anisotropic total variation introduced in \cite{EHH}.
\end{remark}

In the following, we will either use the above notation for $m = n$ or $m = n \times n$. Note that in the latter case the scalar product coincides with the one underlying the Frobenius norm in $\R^{n \times n}$, namely reading for any $v \in \M(\Omega;\R^{n\times n}_\sym)$ as 
\begin{equation*}
    \widetilde{V}^{n\times n}_\vphi(v) = \sup \left\{\int_\Omega \psi : dv(x) \st \psi \in C^1_c(\Omega;\R^{n \times n}_\sym), \|\psi\|_{\vphi^*} \le 1 \right\}.
\end{equation*}
Considering test functions in $C^1_c(\Omega;\R^{n \times n}_\sym)$ or $C^1_c(\Omega;\R^{n \times n})$ is equivalent when $v \in \M(\Omega;\R^{n \times n}_\sym)$, since the scalar product only takes into account the symmetric part of the test function. In general the notation $\widetilde{V}^{n\times n}_\vphi$ could be used also for measures in $\M(\Omega;\R^{n \times n})$ without any symmetry property, but that is not the case in this work. We avoid specifying the symmetry in order to lighten the notation.

\begin{remark}
\label{rmk:struttura}
    Considering an open set $\Omega \subset \R^n$, if $v$ has the special structure $v = Du - w$, with $u,w \in \M(\Omega;\R^n)$, an integration by parts entails
    \begin{equation*}
        \widetilde{V}^n_\vphi(Du - w) = \sup \left\{\int_\Omega u \, \div \, \psi \, dx + \int_\Omega \psi \cdot dw(x) \st \psi \in C^1_c(\Omega;\R^n), \|\psi\|_{\vphi^*} \le 1 \right\},
    \end{equation*}
    see also \cite{DFFI}. Note that it is possible to rewrite it like this by \cite[Exercise 3.2]{AFP}, indeed having $u \in \M(\Omega;\R^n)$ and $Du = v + w \in \M(\Omega;\R^n)$ implies that $u \in BV(\Omega)$.
    \\
    In the same way, if $\Omega$ is also bounded and $C^1$, by \cite[Chapter II, Theorem 2.3]{temam}, having $Eu \in \M(\Omega;\R^{n \times n}_\sym)$ implies $u \in BD(\Omega)$. Namely, in the special case $v = Eu \in \M(\Omega;\R^{n\times n}_\sym)$, \eqref{V_tilde} rewrites as
    \begin{equation}
    \label{var_grad_sym}
        \widetilde{V}^{n \times n}_\vphi(E u) = \sup \left\{\int_\Omega E^* \psi \cdot u \, dx \st \psi \in C^1_c(\Omega;\R^{n \times n}_\sym), \|\psi\|_{\vphi^*} \le 1 \right\},
    \end{equation}
    where $E^* \psi = -\div \, \psi$, and $\div$ denotes the matrix divergence of order $1$,  computed  row-wise (i.e. with respect to columns), cf. \eqref{divrow}.
\end{remark}

\begin{remark}
    We point out that, here and in the following, the set $\Omega$ is only required to have a $C^1$ boundary in order to apply \cite[Chapter II, Theorem 2.3]{temam}. The extension of the above result to the case of Lipschitz boundary will be the subject of the forthcoming paper \cite{DR25}.
\end{remark}

Thanks to the definition of generalized variation associated to $\vphi$ in \eqref{V_tilde}, we can now define the space of bounded deformation fields with generalized Orlicz growth, namely
\begin{equation}
\label{BD_phi}
    BD^\vphi(\Omega) := \left\{ w \in L^\vphi(\Omega;\R^n) \st \|w\|_{BD^\vphi(\Omega)} < +\infty \right\},
\end{equation}
where
\begin{equation*}
    \|w\|_{BD^\vphi(\Omega)} := \|w\|_\vphi + \widetilde{V}^{n \times n}_\vphi(Ew).
\end{equation*}

\begin{lemma}
\label{lmm:seminorm}
    Let $\Omega$ be a bounded and open set and let $\vphi \in \Phi_w(\Omega)$. Then $\widetilde{V}^{n\times n}_\vphi$ is a seminorm.
\end{lemma}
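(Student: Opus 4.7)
The plan is to verify the two seminorm axioms (N2) and (N3) of Definition~\ref{def:norms} directly from the supremum definition \eqref{V_tilde} of $\widetilde{V}^{n\times n}_\vphi$. The key structural observation is that, for fixed $\psi \in C^1_c(\Omega;\R^{n\times n}_\sym)$, the functional $v \mapsto \int_\Omega \psi : dv(x)$ is linear on $\M(\Omega;\R^{n\times n}_\sym)$, so what remains is essentially book-keeping on the admissible test set.

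For positive homogeneity (N2), given $a \in \R$ and $v \in \M(\Omega;\R^{n\times n}_\sym)$, I would first treat $a \geq 0$: the linearity of the pairing yields $\int_\Omega \psi : d(av) = a \int_\Omega \psi : dv$, and taking the supremum over admissible $\psi$ gives $\widetilde{V}^{n\times n}_\vphi(av) = a \widetilde{V}^{n\times n}_\vphi(v)$. For $a < 0$, I would exploit the fact that the test set $\{\psi : \|\psi\|_{\vphi^*} \le 1\}$ is symmetric, since the Luxemburg quasinorm $\|\cdot\|_{\vphi^*}$ satisfies $\|-\psi\|_{\vphi^*} = \|\psi\|_{\vphi^*}$ (a consequence of property (iii) of Definition~\ref{def:modular}, namely $\vrho_{\vphi^*}(-\psi) = \vrho_{\vphi^*}(\psi)$, together with Definition~\ref{def_modular_space}). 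Changing the sign variable $\psi \mapsto -\psi$ inside the supremum then converts the factor $a$ into $|a|$.

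For subadditivity (N3), given $v_1, v_2 \in \M(\Omega;\R^{n\times n}_\sym)$ and any admissible $\psi$, linearity gives
\begin{equation*}
    \int_\Omega \psi : d(v_1 + v_2) = \int_\Omega \psi : dv_1 + \int_\Omega \psi : dv_2 \leq \widetilde{V}^{n\times n}_\vphi(v_1) + \widetilde{V}^{n\times n}_\vphi(v_2),
\end{equation*}
and passing to the supremum in $\psi$ on the left-hand side concludes.

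There is no real obstacle here: neither injectivity (N1) nor the triangle inequality in the sharp form rely on any nontrivial property of $\vphi^*$ beyond the evenness of the modular, which is built into Definition~\ref{def:modular}(iii). The only mild subtlety worth mentioning is that the values $\widetilde{V}^{n\times n}_\vphi(v_i)$ may be $+\infty$, in which case the inequality in (N3) is trivial with the usual convention $\infty + \infty = \infty$ and $|a|\cdot \infty = \infty$ for $a \neq 0$; the case $a = 0$ is handled separately by noting that $\widetilde{V}^{n\times n}_\vphi(0) = 0$ directly from \eqref{V_tilde}.
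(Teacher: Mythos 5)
Your proof is correct and follows essentially the same route as the paper: check (N2) and (N3) of Definition~\ref{def:norms} directly from the supremum definition~\eqref{V_tilde}, using linearity of the pairing and symmetry of the admissible test set. The only difference is that you spell out the homogeneity step (splitting $a\ge 0$ and $a<0$ and invoking evenness of the Luxemburg quasinorm) and the edge cases $a=0$ and $\widetilde{V}^{n\times n}_\vphi(v_i)=+\infty$, where the paper simply declares (N2) ``obvious''; the subadditivity argument is identical.
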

\begin{proof}
    We need to check that $\widetilde{V}^{n \times n}_\vphi$ satisfies conditions (N2) and (N3) in Definition \ref{def:norms}. The homogeneity property (N2) is obvious, in fact $\widetilde{V}^{n \times n}_\vphi(av) = |a|\widetilde{V}^{n \times n}_\vphi(v)$ for every $v \in \M(\Omega;\R^{n \times n}_\sym)$ and any $a \in \R$. In order to check (N3), namely the triangle inequality, let us consider $v,w \in \M(\Omega;\R^{n \times n}_\sym)$. We have
    \begin{equation*}
        \int_\Omega \psi : d(v+w) = \int_\Omega \psi : dv + \int_\Omega \psi : dw \le \widetilde{V}^{n \times n}_\vphi(v) + \widetilde{V}^{n \times n}_\vphi(w)
    \end{equation*}
    for any $\psi \in C^1_c(\Omega;\R^{n \times n}_\sym)$ such that $\|\psi\|_{\vphi^*} \le 1$. By taking the supremum over such $\psi$ we have the thesis.
\end{proof}

This result entails that if $\vphi \in \Phi_w(\Omega)$, then $\|\cdot\|_{BD^\vphi(\Omega)}$ is a quasinorm. In addition, if $\vphi$ verifies (A0), or if $\vphi \in \Phi_c(\Omega)$, then $\|\cdot\|_{BD^\vphi(\Omega)}$ is a norm, see \cite[Lemma 3.2.2 \& Theorem 3.7.13]{HaH}.

\begin{remark}
\label{rmk:A0_si_no_BD}
    As for $BV(\Omega)$ and $BV^\vphi(\Omega)$, see Remark \ref{rmk:A0_si_no}, without assuming any other property on $\vphi$ except for it to be in the space $\Phi_w(\Omega)$, the spaces $BD(\Omega)$ and $BD^\vphi(\Omega)$ are not related to each other. Considering a set $\Omega$ bounded, one gets the continuous immersion $BD^\vphi(\Omega) \hookrightarrow BD(\Omega)$ assuming (A0) on the function $\vphi$, see Lemma \ref{lmm:inclusions}.
\end{remark}

\begin{remark}
\label{rmk:BD-phi=BD-tilde}
    Since a priori we define the space starting from a function $\vphi \in \Phi_w(\Omega)$, one may ask whether the space defined starting from $\vphi^{**}$, always a convex $\Phi$-function, see Lemma \ref{lmm:prop_conj}(i), namely
    \begin{equation*}
        BD^{\vphi^{**}}(\Omega) := \left\{ w \in L^{\vphi^{**}}(\Omega;\R^n) \st \|w\|_{BD^{\vphi^{**}}(\Omega)} < +\infty \right\},
    \end{equation*}
    with
    \begin{equation*}
        \|w\|_{BD^{\vphi^{**}}(\Omega)} := \|w\|_{\vphi^{**}} + \widetilde{V}^{n \times n}_{\vphi^{**}}(E w),
    \end{equation*}
    differs from the space $BD^\vphi(\Omega)$. Thanks to \cite[Proposition 2.4.5 \& Proposition 3.2.4]{HaH} it is possible to prove that $BD^\vphi(\Omega) = BD^{\vphi^{**}}(\Omega)$, indeed the norms $\|\cdot\|_\vphi$ and $\|\cdot\|_{\vphi^{**}}$ are equivalent and the variations coincide, implying that the norms $\|\cdot\|_{BD^\vphi(\Omega)}$ and $\|\cdot\|_{BD^{\vphi^{**}}(\Omega)}$ are equivalent as well. Note that some results needed for this consideration, e.g. \cite[Lemma 2.4.3]{HaH} or \cite[Proposition 2.4.5]{HaH}, are only stated without the $x$-dependence, but they hold true even in the generalized Orlicz setting, while others, e.g. \cite[Theorem 2.5.10]{HaH} or \cite[Proposition 3.2.4]{HaH}, are already stated with the $x$-dependence.
\end{remark}

With the next example we want to show that, at least considering some $\Phi$-functions for which a Korn-type inequality does not hold, the space $BV^\vphi(\Omega)$ is a proper subset of $BD^\vphi(\Omega)$. The example follows the lines of \cite[Example 7.7]{ACD}, exploiting a construction developed in \cite{O62}, see also \cite[Chapter I, Subsection 1.3]{temam}.

\begin{example}
    We first remark that \eqref{var_BHHZ}, $BV^\vphi(\Omega;\R^n) \subset BD^\vphi(\Omega)$. We show below that, at least in some situations, the inclusion is strict.
    \\
    Indeed, without loss of generality we can consider an open set $\Omega \supset \supset B$, with $B$ a small ball centered at $0$, and a function such that $\vphi(x,t) = |t|$ for a.e. $x \in B$. Then, the same field $u$ provided by \cite[Example 7.7]{ACD}, which has compact support in $B$ and is in $BD(\Omega) \setminus BV(\Omega; \R^n)$ provides the desired example of a field in $BD^\vphi(\Omega) \setminus BV^\vphi(\Omega; \R^n)$.
    \\
    Note that many $\Phi$-functions, e.g. many out of the ones listed in \cite[Example 2.5.3]{HaH}, fulfill the assumption of coinciding with $|t|$ on a ball $B$, meaning that this example holds true in many contexts.
\end{example}

The validity of Korn-type inequalities in the context of Orlicz spaces has been studied in different contexts, for example \cite[Theorem 1.1]{BD12} where it was shown that the $\nabla_2$-condition and the $\Delta_2$-condition are sharp conditions for Korn-type inequalities in Orlicz spaces (for the definition of these two conditions see \cite[Subsection 2.3]{rao-ren} or \cite[Equations (2.6) \& (2.7)]{BCD17}). In \cite[Theorems 3.1 \& 3.3]{C14} the validity of Korn-type inequalities for Young functions was shown, while in \cite[Theorems 3.1 \& 3.3]{BCD17} the equivalence of these Korn-type inequalities to two balance conditions, similar to the $\nabla_2$- and $\Delta_2$-conditions, has been proven. Note that, under the assumptions considered in \cite{C14} and \cite{BCD17}, their definition of Young functions and our definition of convex $\Phi$-functions, when independent of $x$, are equivalent. In the context of generalized Orlicz spaces (or Musielak-Orlicz spaces) instead, there are results in the case of variable exponents Lebesgue spaces when the exponent is bounded away from $1$, see \cite[Section 5]{DR03} and \cite[Theorems 14.3.21 \& 14.3.23]{HHbook}, but not much else is known.


\subsection{Weighted modular and associated norms}
\label{sec:prop_var_mod}

Analogously to \cite[Definition 4.1]{EHH}, we define the anisotropic modular, or {\it dual modular}, associated to the anisotropic total variation $\widetilde{V}^{n \times n}_\vphi$, defined as in \eqref{V_tilde}. In particular, we define for any $v \in \M(\Omega; \R^{n \times n}_\sym)$
\begin{equation}
\label{varrhosymvarphi}
    \vrho_{\widetilde{V}^{n \times n}_\vphi}(v) := \sup \left\{ \int_\Omega \psi : dv(x) - \int_\Omega\vphi^*(x,|\psi|) \, dx \st \psi \in C^1_c(\Omega; \R^{n \times n}_\sym) \right\}.
\end{equation}

\begin{lemma}
\label{lmm:semimod}
    Let $\Omega$ be a bounded and open set and let $\vphi \in \Phi_w(\Omega)$. Then $\vrho_{\widetilde{V}^{n \times n}_\vphi}$ is a semimodular.
\end{lemma}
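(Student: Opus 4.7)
I need to verify the four properties of a semimodular in Definition \ref{def:modular}, namely (i)--(iv) with $\beta = 1$. Since the integrand inside the defining supremum is affine in $v$, three of the four properties will be essentially immediate and the only mild point is the monotonicity in $\lambda$, which I would deduce from convexity.

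First I would check (i): plugging $v = 0$ into \eqref{varrhosymvarphi} reduces the expression to $\sup_\psi \{- \int_\Omega \vphi^*(x,|\psi|)\,dx\}$, which is nonpositive because $\vphi^* \ge 0$, and is attained at $\psi \equiv 0$ since $\vphi^*(x,0) = 0$ (this holds because $\vphi(x,0)=0$ and by the very definition of the Fenchel conjugate). Hence $\vrho_{\widetilde{V}^{n\times n}_\vphi}(0)=0$. For (iii), I would perform the change of test function $\tilde\psi := -\psi$ in the supremum defining $\vrho_{\widetilde{V}^{n\times n}_\vphi}(-v)$: the class $C^1_c(\Omega;\R^{n\times n}_\sym)$ is closed under negation, and $\vphi^*(x,|\tilde\psi|) = \vphi^*(x,|\psi|)$, so the supremum is left unchanged and equals $\vrho_{\widetilde{V}^{n\times n}_\vphi}(v)$.

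Next I would tackle (iv) with $\beta = 1$. For every fixed $\psi \in C^1_c(\Omega;\R^{n\times n}_\sym)$, the map
\begin{equation*}
v \longmapsto \int_\Omega \psi : dv(x) - \int_\Omega \vphi^*(x,|\psi|)\,dx
\end{equation*}
is affine in $v$, so for $v,w \in \M(\Omega;\R^{n\times n}_\sym)$ and $\theta \in [0,1]$ one has
\begin{equation*}
\int_\Omega \psi : d(\theta v + (1-\theta)w) - \int_\Omega\vphi^*(x,|\psi|)\,dx
= \theta A(\psi,v) + (1-\theta) A(\psi,w),
\end{equation*}
where $A(\psi,\cdot)$ denotes the individual integrand for $v$ or $w$. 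Bounding each term by $\vrho_{\widetilde{V}^{n\times n}_\vphi}(v)$ and $\vrho_{\widetilde{V}^{n\times n}_\vphi}(w)$ respectively, and then passing to the supremum over $\psi$, gives the convexity inequality with $\beta = 1$.

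Finally, for (ii), I would use that (iv) combined with (i) already implies monotonicity on $[0,+\infty)$: first observe that $\vrho_{\widetilde{V}^{n\times n}_\vphi}(\lambda v) \ge 0$ for every $\lambda \ge 0$ (take $\psi = 0$ in \eqref{varrhosymvarphi}); then, for $0 \le \lambda_1 \le \lambda_2$ with $\lambda_2 > 0$, decompose
\begin{equation*}
\lambda_1 v = \tfrac{\lambda_1}{\lambda_2}(\lambda_2 v) + \bigl(1-\tfrac{\lambda_1}{\lambda_2}\bigr)\cdot 0,
\end{equation*}
and apply convexity from (iv) together with $\vrho_{\widetilde{V}^{n\times n}_\vphi}(0)=0$ to conclude $\vrho_{\widetilde{V}^{n\times n}_\vphi}(\lambda_1 v) \le \tfrac{\lambda_1}{\lambda_2}\,\vrho_{\widetilde{V}^{n\times n}_\vphi}(\lambda_2 v) \le \vrho_{\widetilde{V}^{n\times n}_\vphi}(\lambda_2 v)$. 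The case $\lambda_2 = 0$ is trivial from (i). No step presents a real obstacle: the proof is a routine verification that exploits the affine dependence on $v$ of the expression inside the supremum.
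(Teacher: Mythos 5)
Your proposal is correct. The verification of (i), (iii), and (iv) matches the paper's argument: both treat (i) and (iii) as immediate (testing with $\psi = 0$ for (i), using $\psi \leftrightarrow -\psi$ invariance for (iii)), and both establish (iv) by exploiting that $v \mapsto \int_\Omega \psi : dv - \int_\Omega \vphi^*(x,|\psi|)\,dx$ is affine for each fixed $\psi$ and then bounding before passing to the supremum. The one place where you genuinely diverge is (ii). The paper proves monotonicity directly: it observes that, by the $\psi \leftrightarrow -\psi$ symmetry, one may restrict the supremum to the class $\overline{T}^+$ of test functions with $\int_\Omega \psi : dv \ge 0$, for which $\lambda \int_\Omega \psi : dv$ is manifestly nondecreasing in $\lambda$, and then compares the two suprema. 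You instead derive (ii) abstractly from (i), (iv) with $\beta = 1$, and the nonnegativity of $\vrho_{\widetilde{V}^{n\times n}_\vphi}$, via the standard fact that a nonnegative convex function vanishing at the origin is nondecreasing along rays from the origin: $\vrho(\lambda_1 v) = \vrho\bigl(\tfrac{\lambda_1}{\lambda_2}(\lambda_2 v) + (1 - \tfrac{\lambda_1}{\lambda_2})\cdot 0\bigr) \le \tfrac{\lambda_1}{\lambda_2}\vrho(\lambda_2 v) \le \vrho(\lambda_2 v)$. This is a cleaner, structure-free argument that applies to any candidate semimodular once (i), (iv), and nonnegativity are in hand; the paper's argument is more hands-on but also exhibits explicitly which test functions drive the supremum, which some readers may find more instructive. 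Both are correct.
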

\begin{proof}
    We need to check that $\vrho_{\widetilde{V}^{n \times n}_\vphi}$ satisfies the conditions of Definition \ref{def:modular}. Conditions (i) and (iii) are obvious, respectively thanks to the properties of $\vphi^*$ and that if $\psi$ is an admissible test function, then also $-\psi$ is. Condition (iv) is also easy to prove using the linearity of the first integral in the definition of $\vrho_{\widetilde{V}^{n \times n}_\vphi}$ and the fact that the supremum of a sum is smaller than the sum of the suprema.
    \\
    We are only left to prove condition (ii), namely that the function $\lambda \mapsto \vrho_{\widetilde{V}^{n \times n}_\vphi}(\lambda v)$ on $[0, +\infty)$ is non-decreasing for any $v \in \M(\Omega; \R^{n \times n}_\sym)$. Indeed, if $\lambda = 0$, then $\vrho_{\widetilde{V}^{n \times n}_\vphi}(\lambda v) = 0$, but we know already that for any other $\mu > \lambda = 0$ then $\vrho_{\widetilde{V}^{n \times n}_\vphi}(\mu v) \ge 0$, since we can always choose $\psi = 0$ as a test function. Let us instead fix $0 < \lambda < \mu < +\infty$. We point out that since both $\psi$ and $-\psi$ are possible test functions, we can also only consider without loss of generality functions in the space
    \begin{equation*}
        \overline{T}^+ := \left\{ \psi \in C^1_c(\Omega; \R^{n \times n}_\sym) \st \int_\Omega \psi : dv(x) \ge 0 \right\},
    \end{equation*}
    since the second integral in the definition of the modular is always zero or negative. Then we have
    \begin{equation*}
    \begin{aligned}
        \vrho_{\widetilde{V}^{n \times n}_\vphi}(\lambda v)
        &= \sup \left\{ \int_\Omega \lambda\psi : dv(x) - \int_\Omega\vphi^*(x,|\psi|) \, dx \st \psi \in C^1_c(\Omega; \R^{n \times n}_\sym) \right\} \\
        &= \sup \left\{ \lambda \int_\Omega \psi : dv(x) - \int_\Omega\vphi^*(x,|\psi|) \, dx \st \psi \in \overline{T}^+ \right\} \\
        &\le \sup \left\{ \mu \int_\Omega \psi : dv(x) - \int_\Omega\vphi^*(x,|\psi|) \, dx \st \psi \in \overline{T}^+ \right\} \\
        &= \sup \left\{\int_\Omega \mu\psi : dv(x) - \int_\Omega\vphi^*(x,|\psi|) \, dx \st \psi \in C^1_c(\Omega; \R^{n \times n}_\sym) \right\} = \vrho_{\widetilde{V}^{n \times n}_\vphi}(\mu v),
    \end{aligned}
    \end{equation*}
    for any $v \in \M(\Omega; \R^{n \times n}_\sym)$, proving condition (ii).
    \\
    Since condition (iv) holds with the constant $\beta = 1$, then $\vrho_{\widetilde{V}^{n \times n}_\vphi}$ is a semimodular.
\end{proof}

Note that, analogously to the anisotropic total variation, see Remark \ref{rmk:struttura}, considering a bounded, $C^1$ and open set $\Omega \subset \R^n$, in the special case $v = Eu \in \M(\Omega; \R^{n \times n}_\sym)$ this rewrites as
\begin{equation}
\label{varrhosymvarphi_Eu}
    \vrho_{\widetilde{V}^{n \times n}_\vphi}(Eu) = \sup \left\{ \int_\Omega \left[u \cdot E^*\psi - \vphi^*(x,|\psi|) \right] dx \st \psi \in C^1_c(\Omega; \R^{n \times n}_\sym) \right\},
\end{equation}
where $E^*\psi = - \div \, \psi$.

\begin{remark}
\label{rmk:test_mod}
    On the one hand, note that testing \eqref{varrhosymvarphi_Eu} with $\psi = 0$, we get that the supremum in $\vrho_{\widetilde{V}^{n \times n}_\vphi}$ is always non-negative. On the other hand, if $\vrho_{\vphi^*}(|\psi|) = +\infty$, then the integral in \eqref{varrhosymvarphi_Eu} is equal to $-\infty$, since the first term is finite. Here we used again  \cite[Chapter II, Theorem 2.3]{temam} yielding that $Eu \in \M(\Omega;\R^{n\times n}_\sym)$ implies $u \in BD(\Omega)$.

    This means that test functions $\psi$ with $\vrho_{\vphi^*}(|\psi|) = +\infty$ can be omitted and $\vrho_{\widetilde{V}^{n \times n}_\vphi}$ can be equivalently rewritten as
    \begin{equation*}
        \vrho_{\widetilde{V}^{n \times n}_\vphi}(Eu) := \sup \left\{ \int_\Omega \left[u \cdot E^*\psi - \vphi^*(x,|\psi|) \right] dx \st \psi \in C^1_c(\Omega; \R^{n \times n}_\sym), \vrho_{\vphi^*}(|\psi|) < +\infty \right\}.
    \end{equation*}
    The analogous reasoning can be repeated for the general expression of $\vrho_{\widetilde{V}^{n \times n}_\vphi}$ in \eqref{varrhosymvarphi}.
\end{remark}

In Lemma \ref{lmm:semimod} we showed that $\vrho_{\widetilde{V}^{n \times n}_\vphi}$ is a semimodular, implying that, thanks to the Luxemburg method, we obtain a seminorm $\|\cdot\|_{\vrho_{\widetilde{V}^{n \times n}_\vphi}}$. The following result relates the anisotropic variation we introduced in \eqref{V_tilde} with the seminorm associated to the modular defined in \eqref{varrhosymvarphi}, similarly to \cite[Lemma 4.7]{EHH}.

\begin{lemma}
\label{lem:equivalence}
    Let $\Omega \subset \R^n$ be a bounded, connected, $C^1$ and open set. Let $\vphi \in \Phi_w(\Omega)$ and let $u \in BD^\vphi(\Omega)$. Then
    \begin{equation*}
        \|Eu\|_{\vrho_{\widetilde{V}^{n \times n}_\vphi}} \le \widetilde{V}^{n\times n}_\vphi(Eu) \le 2 \, \|Eu\|_{\vrho_{\widetilde{V}^{n \times n}_\vphi}}.
    \end{equation*}
\end{lemma}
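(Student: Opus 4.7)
The plan is to split the claim into two separate Luxemburg-type inequalities, mirroring the approach of \cite[Lemma 4.7]{EHH} for $V_\vphi$, with the anisotropic variation $\widetilde{V}^{n\times n}_\vphi$ playing the role of the (associate) dual norm. The starting observation is that by homogeneity one has
\begin{equation*}
    \int_\Omega \psi : dEu \le \|\psi\|_{\vphi^*} \, \widetilde{V}^{n\times n}_\vphi(Eu)
\end{equation*}
for every $\psi \in C^1_c(\Omega;\R^{n\times n}_\sym)$, and that, since $\vphi^* \in \Phi_c(\Omega)$ by Lemma \ref{lmm:prop_conj}(i), the conjugate modular $\rho_{\vphi^*}$ enjoys the standard Luxemburg bounds, in particular $\|\psi\|_{\vphi^*}\le 1+\rho_{\vphi^*}(|\psi|)$ and $\|\psi\|_{\vphi^*}\le 1\Rightarrow \rho_{\vphi^*}(|\psi|)\le 1$ (the latter via left-continuity and monotone convergence).

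For the lower bound $\|Eu\|_{\vrho_{\widetilde{V}^{n \times n}_\vphi}} \le \widetilde{V}^{n\times n}_\vphi(Eu)$, I would set $\lambda:=\widetilde{V}^{n\times n}_\vphi(Eu)$ (the case $\lambda=0$ or $\lambda=+\infty$ is immediate) and show $\vrho_{\widetilde{V}^{n \times n}_\vphi}(Eu/\lambda)\le 1$, which by Definition \ref{def_modular_space} yields the desired inequality. For any $\psi\in C^1_c(\Omega;\R^{n\times n}_\sym)$, combining the two Luxemburg facts above gives
\begin{equation*}
    \int_\Omega \psi : d\left(\tfrac{Eu}{\lambda}\right) - \int_\Omega \vphi^*(x,|\psi|)\,dx \;\le\; \|\psi\|_{\vphi^*} - \rho_{\vphi^*}(|\psi|) \;\le\; 1,
\end{equation*}
and taking the supremum over $\psi$ closes this direction.

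For the upper bound $\widetilde{V}^{n\times n}_\vphi(Eu)\le 2\,\|Eu\|_{\vrho_{\widetilde{V}^{n \times n}_\vphi}}$, I would pick an arbitrary $\lambda'>\lambda:=\|Eu\|_{\vrho_{\widetilde{V}^{n \times n}_\vphi}}$, so that by monotonicity of the semimodular (Lemma \ref{lmm:semimod}) one has $\vrho_{\widetilde{V}^{n \times n}_\vphi}(Eu/\lambda')\le 1$. Testing against any admissible $\psi$ with $\|\psi\|_{\vphi^*}\le 1$, which forces $\rho_{\vphi^*}(|\psi|)\le 1$, I get
\begin{equation*}
    \int_\Omega \psi : d\left(\tfrac{Eu}{\lambda'}\right) \;\le\; 1 + \int_\Omega \vphi^*(x,|\psi|)\,dx \;\le\; 2.
\end{equation*}
Taking the supremum over such $\psi$ yields $\widetilde{V}^{n\times n}_\vphi(Eu)\le 2\lambda'$, and letting $\lambda'\downarrow \lambda$ concludes the argument.

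No substantial obstacle is expected. The only delicate points are the two auxiliary Luxemburg inequalities for $\rho_{\vphi^*}$, which rely crucially on the convexity and left-continuity of $\vphi^*$ granted by Lemma \ref{lmm:prop_conj}(i); this is also what ultimately explains the factor $2$ in the right-hand side, inherited from the classical equivalence between Luxemburg and Orlicz (Amemiya) norms.
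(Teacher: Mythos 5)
Your proof is correct and follows essentially the same route as the paper: both directions hinge on the same two Luxemburg facts for $\vrho_{\vphi^*}$ (namely $\|\psi\|_{\vphi^*}\le 1+\vrho_{\vphi^*}(|\psi|)$ and the implication $\|\psi\|_{\vphi^*}\le 1\Rightarrow\vrho_{\vphi^*}(|\psi|)\le 1$, the latter via convexity and left-continuity of $\vphi^*$), combined with the associate-norm estimate $\int_\Omega\psi:dEu\le\|\psi\|_{\vphi^*}\,\widetilde V^{n\times n}_\vphi(Eu)$. The only cosmetic difference is that you carry the explicit scales $\lambda,\lambda'$ through the argument whereas the paper first normalizes $\widetilde V^{n\times n}_\vphi(Eu)=1$; the substance, including the source of the factor $2$, is identical.
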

\begin{proof}
    Since $u \in BD^\vphi(\Omega)$, the case $\widetilde{V}^{n \times n}_\vphi(E u) = +\infty$ is excluded a priori. Moreover, if $\widetilde{V}^{n \times n}_\vphi(Eu) = 0$, then $\vrho_{\widetilde{V}^{n \times n}_\vphi}(\frac{Eu}{\lambda}) = 0$ for every $\lambda > 0$, yielding $\|Eu\|_{\vrho_{\widetilde{V}^{n \times n}_\vphi}} = 0$. Lastly, since the claim is homogeneous, the case $\widetilde{V}^{n \times n}_\vphi(Eu) \in (0,+\infty)$ can always be reduced to $\widetilde{V}^{n \times n}_\vphi(Eu) = 1$. 
    \\
    By the definition of $\widetilde{V}^{n \times n}_\vphi$, see \eqref{var_grad_sym}, for any $w \in C^1_c(\Omega;\R^{n \times n}_\sym)$ such that $\|w\|_{\vphi^*} \le 1$ it follows that 
    \begin{equation*}
        \int_\Omega u \cdot E^*w \, dx \le \widetilde{V}^{n \times n}_\vphi(Eu) \, \|w\|_{\vphi^*} = \|w\|_{\vphi^*} \le 1 + \vrho_{\vphi^*}(|w|),
    \end{equation*}
    where the last inequality comes from a general property of the Luxemburg norms, see \cite[Corollary 2.1.15]{HHbook}. Thus, by the definition of $\vrho_{\widetilde{V}^{n \times n}_\vphi}$ given in \eqref{varrhosymvarphi},
    \begin{equation*}
        \vrho_{\widetilde{V}^{n \times n}_\vphi}(Eu) \le \sup_{w \in C^1_c(\Omega; \R^{n \times n}_\sym)} [1 + \vrho_{\vphi^*}(|w|) - \vrho_{\vphi^*}(|w|)] = 1,
    \end{equation*}
    implying $\|E u\|_{\vrho_{\widetilde{V}^{n \times n}_\vphi}} \le 1$. This concludes the proof of the first inequality. 
    \\
    We next establish the opposite inequality $2 \, \|Eu\|_{\vrho_{\widetilde{V}^{n \times n}_\vphi}} \ge 1$, which is equivalent to
    \begin{equation}
    \label{eq:mod_magg}
        \vrho_{\widetilde{V}^{n \times n}_\vphi} \left(\frac{2Eu}{\lambda} \right) > 1 \qquad \textnormal{ for every } \lambda < 1.
    \end{equation}
    Indeed, on the one hand if $\|v\|_{\vrho_{\widetilde{V}^{n \times n}_\vphi}} \ge 1$, by Definition \ref{def_modular_space} we have that
    \begin{equation*}
        \inf \left\{ k > 0 : \vrho_{\widetilde{V}^{n \times n}_\vphi}\left( \frac{v}{k} \right) \le 1 \right\} \ge 1,
    \end{equation*}
    entailing that for any $\lambda < 1$ it holds $\|v\|_{\vrho_{\widetilde{V}^{n \times n}_\vphi}} > \lambda$. By \cite[Lemma 3.2.3]{HaH} then this implies that $\vrho_{\widetilde{V}^{n \times n}_\vphi}(\tfrac{v}{\lambda}) > 1$ for any $\lambda < 1$. On the other hand, having $\vrho_{\widetilde{V}^{n \times n}_\vphi}(\tfrac{v}{\lambda}) > 1$ for any $\lambda < 1$ entails that, by Definition \ref{def_modular_space}, 
    \begin{equation*}
        \|v\|_{\vrho_{\widetilde{V}^{n \times n}_\vphi}} =   \inf \left\{ \lambda > 0 \st \vrho_{\widetilde{V}^{n \times n}_\vphi}\left( \frac{v}{\lambda} \right) \le 1 \right\} \ge 1
    \end{equation*}
    directly from the definition of $\inf$. Now, by \cite[Lemma 3.2.3]{HaH} and thanks to the fact that $\vphi^* \in \Phi_c(\Omega)$, see Lemma \ref{lmm:prop_conj}(iii), $\vrho_{\vphi^*}(|w|) \le 1$ if and only if $\|w\|_{\vphi^*} \le 1$, and we can immediately conclude that
    \begin{equation*}
    \begin{aligned}
        \vrho_{\widetilde{V}^{n \times n}_\vphi} \left(\frac{2Eu}{\lambda} \right)
        & \ge \sup \left\{ \int_\Omega \left[ \frac{2 u}{\lambda} \cdot E^* w - \vphi^*(x, |w|) \right] dx \st w \in C^1_c(\Omega; \R^{n \times n}_\sym), \|w\|_{\vphi^*}\le 
        1 \right\} \\
        & \ge \frac{2}{\lambda} \, \widetilde{V}^{n \times n}_\vphi(Eu) - 1 > 1.
    \end{aligned}
    \end{equation*}
    This proves \eqref{eq:mod_magg} and yields the thesis.
\end{proof}

The following result, instead, relates the seminorm $\widetilde{V}^{n\times n}_\vphi$, see Lemma \ref{lmm:seminorm}, with the norm of the symmetrized gradient, in particular considering the norm associated to the associate space $(L^{\vphi^*}(\Omega; \R^{n \times n}))'$. Obviously, in the associate space norm we test with functions in $L^{\vphi^*}(\Omega; \R^{n \times n})$, while in $\widetilde{V}^{n\times n}_\vphi$ the test functions are smooth. This means that some approximation is needed, but a priori we are not able to use density in $L^{\vphi^*}(\Omega; \R^{n \times n})$ since $\vphi^*$ is in general not doubling.
\\
In order to state the result, we need the classical decomposition of the symmetrized gradient, see for instance \cite{BCD},
\begin{equation*}
    Eu = \E u \, \mathcal{L}^n + E^s u.
\end{equation*}
Moreover, let us remark the definition of the space $LD_\loc(\Omega)$, namely
\begin{equation*}
    LD_\loc(\Omega) := \{u \in L^1_\loc(\Omega; \R^n) \st Eu \in L^1_\loc(\Omega;\R^{n \times n}_\sym)\}.
\end{equation*}
The result, analogous to \cite[Theorem 5.2]{EHH}, reads as follows.

\begin{lemma}
\label{lem:BV-gradient}
    Let $\Omega \subset \R^n$ be a bounded, connected and open set. Let $\vphi \in \Phi_w(\Omega)$ and let $u \in LD_\loc(\Omega)$. Then
    \begin{equation*}
        \widetilde{V}^{n \times n}_\vphi(Eu) \le \| \E u \|_{(L^{\vphi^*}(\Omega; \R^{n \times n}))'}.
    \end{equation*}
    Moreover, if additionally $C^1_c(\Omega; \R^{n \times n})$ is dense in $L^{\vphi^*}(\Omega; \R^{n \times n})$, then
    \begin{equation*}
        \widetilde{V}^{n \times n}_\vphi(Eu) = \| \E u \|_{(L^{\vphi^*}(\Omega;\R^{n \times n}))'}.
    \end{equation*}
\end{lemma}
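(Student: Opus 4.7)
The plan is to match test classes and use duality between generalized Orlicz spaces. For the first inequality, since $u\in LD_\loc(\Omega)$ the distributional symmetric gradient coincides with the locally integrable function $\E u$, so that for every $\psi \in C^1_c(\Omega;\R^{n\times n}_\sym)$ a standard integration by parts gives
\begin{equation*}
    \int_\Omega u\cdot E^{*}\psi\,dx \;=\; \int_\Omega \psi : \E u\,dx.
\end{equation*}
Rewriting the supremum in \eqref{var_grad_sym} in this form, if $\|\psi\|_{\vphi^*}\le 1$ then $\psi$ (extended by zero outside its support) is admissible in Definition \ref{def:dual}, hence $\int_\Omega \psi:\E u\,dx \le \|\E u\|_{(L^{\vphi^*}(\Omega;\R^{n\times n}))'}$. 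Taking the supremum over such $\psi$ delivers the inequality.

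For the equality, I would assume the density hypothesis and, without loss of generality, that $\|\E u\|_{(L^{\vphi^*}(\Omega;\R^{n\times n}))'}<+\infty$. Fix $\varepsilon>0$ and pick $v \in L^{\vphi^*}(\Omega;\R^{n\times n})$ with $\|v\|_{\vphi^*}\le 1$ and
\begin{equation*}
    \int_\Omega v:\E u\,dx \;\ge\; \|\E u\|_{(L^{\vphi^*}(\Omega;\R^{n\times n}))'} - \varepsilon.
\end{equation*}
Since $\E u$ is symmetric, replacing $v$ with its symmetric part $\tfrac12(v+v^{\T})$ leaves the integral unchanged; and since the Frobenius norm satisfies $|\tfrac12(A+A^{\T})|\le |A|$ pointwise, the Luxemburg norm cannot increase, so we may assume $v$ is symmetric. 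By the density hypothesis, select $\psi_k \in C^1_c(\Omega;\R^{n\times n})$ with $\psi_k\to v$ in $L^{\vphi^*}$; after symmetrizing each $\psi_k$ pointwise and dividing by $\max\{1,\|\psi_k\|_{\vphi^*}\}$ we obtain symmetric, compactly supported $\hat\psi_k$ with $\|\hat\psi_k\|_{\vphi^*}\le 1$ and $\hat\psi_k\to v$ in $L^{\vphi^*}$. H\"older's inequality for generalized Orlicz spaces, together with $\E u \in (L^{\vphi^*})'$, then yields $\int_\Omega \hat\psi_k:\E u\,dx \to \int_\Omega v:\E u\,dx$. Each $\hat\psi_k$ is admissible in \eqref{var_grad_sym}, so passing to the limit and sending $\varepsilon\to0$ gives the opposite inequality.

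The main subtlety is in the second half: although $C^1_c$ is dense in $L^{\vphi^*}$ by hypothesis, the approximants must simultaneously (i) take values in the symmetric subspace, and (ii) obey the \emph{hard} constraint $\|\hat\psi_k\|_{\vphi^*}\le 1$, not merely satisfy it asymptotically. Both obstacles are handled by the pointwise symmetrization and the gentle renormalization above, which are continuous in the $L^{\vphi^*}$-topology; everything else reduces to the definition of the associate norm and to H\"older's inequality.
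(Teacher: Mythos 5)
Your proof of the first inequality coincides with the paper's: rewrite the supremum in \eqref{var_grad_sym} as a pairing of $\E u$ against symmetric $C^1_c$ test functions of $\vphi^*$-norm at most one, and bound by the definition of the associate norm. For the equality, your mechanism differs: the paper approximates a generic $w\in L^{\vphi^*}(\Omega;\R^{n\times n}_\sym)$ by smooth $w_j$ and invokes Fatou's lemma, then only afterwards specializes $w = \frac{\E u}{|\E u|}h$; you instead take an $\varepsilon$-near-optimizer, symmetrize it pointwise (with the neat observation that $|\tfrac12(A+A^{\T})|\le|A|$ so the Luxemburg norm does not increase — a step the paper leaves implicit), normalize, and pass to the limit via the generalized H\"older inequality. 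Your route is arguably cleaner: H\"older gives genuine convergence of the integrals rather than a one-sided liminf bound, and the renormalization and symmetrization are handled explicitly.

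There is, however, a real gap in the phrase ``without loss of generality $\|\E u\|_{(L^{\vphi^*}(\Omega;\R^{n\times n}))'}<+\infty$.'' If the associate-space norm of $\E u$ is infinite, the equality claims that $\widetilde{V}^{n\times n}_\vphi(Eu)=+\infty$, and your H\"older argument becomes vacuous precisely because the bound $|\int(\hat\psi_k-v):\E u\,dx|\le c\,\|\hat\psi_k-v\|_{\vphi^*}\|\E u\|_{(L^{\vphi^*})'}$ degenerates. Note that $u\in LD_\loc(\Omega)$ only gives $\E u\in L^1_\loc$ and the lemma makes no (A0)-type assumption, so the infinite case cannot be excluded. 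To cover it one really needs the paper's device of choosing $v=\frac{\E u}{|\E u|}h$ with $h\ge 0$, so that the target integrand $v:\E u=|\E u|h$ is nonnegative and a monotone/Fatou-type passage is available; in your framework you would have to handle this case by a separate truncation argument. (To be fair, the paper's application of Fatou to the signed integrand $\E u:w_j$ \emph{before} specializing $w$ is itself not airtight; both proofs need some extra care on this point.)
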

\begin{proof}
    Since $u \in LD_\loc(\Omega)$, it follows from the definition of $\widetilde{V}^{n \times n}_\vphi$ that 
    \begin{equation}
    \label{eq:integrationByParts}
        \widetilde{V}^{n \times n}_\vphi(Eu) = \sup \left\{ \int_\Omega \E u : w \, dx \st w \in C^1_c(\Omega; \R^{n \times n}_\sym), \|w\|_{\vphi^*} \le 1 \right\}. 
    \end{equation}
    The definition of the associate space norm (extended in a standard way to the vector valued setting), see Definition \ref{def:dual}, implies that, chosen $w \in C^1_c(\Omega; \R^{n \times n}_\sym)$ such that $\|w\|_{\vphi^*} \le 1$,
    \begin{equation*}
        \int_\Omega \E u : w \, dx \le \int_\Omega |\E u| |w| \, dx \le \| \E u\|_{(L^{\vphi^*}(\Omega;\R^{n \times n}))'} \|w\|_{\vphi^*}.
    \end{equation*}
    Taking the supremum over those $w$, we conclude that $\widetilde{V}^{n \times n}_\vphi(\E u) \le \|\E u\|_{(L^{\vphi^*}(\Omega;\R^{n \times n}))'}$. 
    \\
    Under the density assumption, namely $C^1_c(\Omega;\R^{n \times n})$ being dense in $L^{\vphi^*}(\Omega;\R^{n \times n})$, we next show the opposite inequality, namely $ \|\E u\|_{(L^{\vphi^*}(\Omega;\R^{n \times n}))'} \le \widetilde{V}^{n \times n}_\vphi(\E u)$. Indeed, let $w \in L^{\vphi^*}(\Omega; \R^{n \times n}_\sym)$ with $\|w\|_{\vphi^*} = 1$ and let $(w_j)_j \subset C^1_c(\Omega; \R^{n \times n}_\sym)$ be a sequence such that $w_j \to w$ in $L^{\vphi^*}(\Omega; \R^{n \times n}_\sym)$ and pointwise almost everywhere. Since we know that also $w_j/\|w_j\|_{\vphi^*} \to w$ in $L^{\vphi^*}(\Omega; \R^{n \times n}_\sym)$, we may assume that $\|w_j\|_{\vphi^*} = 1$ for every $j \in \N$. By Fatou's Lemma, we have
    \begin{equation*}
        \liminf_{j \to \infty} \int_\Omega \E u : w_j \, dx \ge \int_\Omega \E u : w \, dx.
    \end{equation*}
    So, from \eqref{eq:integrationByParts} follows that 
    \begin{equation*}
        \widetilde{V}^{n \times n}_\vphi(\E u) \ge \sup \left\{ \int_\Omega \E u : w \, dx \st w \in L^{\vphi^*}(\Omega; \R^{n \times n}_\sym), \| w\|_{\vphi^*} \le 1 \right\}.
    \end{equation*}
    Let $h\in L^{\vphi^*}(\Omega)$. We can set
    \begin{equation*}
        w :=
        \begin{dcases}
            \frac{\mathcal E u}{|\mathcal E u|} \, h & \textnormal{ if } |\mathcal E u|\neq 0, \\
            0 & \textnormal{ otherwise.}
        \end{dcases}
    \end{equation*}
    This gives 
    \begin{equation*}
        \widetilde{V}^{n \times n}_\vphi( \E u) \ge \sup \left\{ \int_\Omega |\E u| \, h \, dx \st h \in L^{\vphi^*}(\Omega), \| h\|_{\vphi^*}\le 1 \right\} = \| \E u \|_{(L^{\vphi^*}(\Omega;\R^{n \times n}))'}.
    \end{equation*}
    Hence $\widetilde{V}^{n \times n}_\vphi(\E u) = \| \E u \|_{(L^{\vphi^*}(\Omega;\R^{n \times n}))'}$. This concludes the proof.
\end{proof}

In the second part of Lemma \ref{lem:BV-gradient} we assume that $C^1_c(\Omega; \R^{n \times n})$ is dense in $L^{\vphi^*}(\Omega; \R^{n \times n})$. If $\vphi^*$ satisfies (A0) and (aDec), then this holds automatically by \cite[Theorem 3.7.15]{HaH} (note that this theorem is stated for scalar functions, but the vectorial case can be deduced working components-wise thanks to the equivalence of euclidean norms in spaces of finite dimension). We know already that, by Lemma \ref{lmm:prop_conj}(ii) \& (iv), $\vphi^*$ satisfies these conditions if and only if $\vphi$ satisfies (A0) and (aInc), meaning that it is not clear whether this density is there or not in the subsets where the function $\vphi$ is linear.


\subsection{Explicit expression of the anisotropic modular}
\label{sec:modular}

In this subsection we derive an explicit formula for the dual modular $\vrho_{\widetilde{V}^{n \times n}_\vphi}$ in terms of the modular $\vrho_\vphi$ of the absolutely continuous part of the symmetric gradient and its singular part with weight given by the recession function, see Definition \ref{def:recession}. In particular, analogously to \cite[Theorem 6.4]{EHH}, the main result of the section reads as follows.

\begin{theorem}
\label{thm:exactFormula}
    Let $\Omega \subset \R^n$ be a bounded, connected and open set. Let $\vphi \in \Phi_c(\Omega) \cap C(\Omega \times [0,+\infty))$ satisfy (A0), (aDec) and restricted (VA1). If $u \in BD(\Omega)$, then 
    \begin{equation*}
        \vrho_{\widetilde{V}^{n \times n}_\vphi}(Eu) = \vrho_\vphi(|\E u|) + \int_\Omega \vphi^\infty \, d|E^su|.
    \end{equation*}
    \qed
\end{theorem}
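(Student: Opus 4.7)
The plan is to prove the equality by establishing the two inequalities separately, in the spirit of \cite[Theorem 6.4]{EHH} adapted to the symmetric-gradient setting.

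For the upper bound $\vrho_{\widetilde{V}^{n \times n}_\vphi}(Eu) \le \vrho_\vphi(|\E u|) + \int_\Omega \vphi^\infty \, d|E^s u|$, I would fix a test function $\psi \in C^1_c(\Omega; \R^{n \times n}_\sym)$ and, exploiting Remark \ref{rmk:test_mod}, restrict to those with $\vrho_{\vphi^*}(|\psi|) < +\infty$. Since $\vphi \in \Phi_c(\Omega)$ and $\vphi^*(x, s) = +\infty$ for $s > \vphi^\infty(x)$, this forces $|\psi(x)| \le \vphi^\infty(x)$ for a.e.\ $x$ in $\{\vphi^\infty < +\infty\}$. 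Integration by parts (valid as $u \in BD(\Omega)$) combined with the Lebesgue decomposition $Eu = \E u \, \mathcal{L}^n + E^s u$ gives
\begin{equation*}
    \int_\Omega u \cdot E^* \psi \, dx = \int_\Omega \psi : \E u \, dx + \int_\Omega \psi : dE^s u.
\end{equation*}
On the absolutely continuous part, Young's inequality $\psi : \E u \le \vphi(x, |\E u|) + \vphi^*(x, |\psi|)$ yields a bound by $\vrho_\vphi(|\E u|)$ after subtracting $\int_\Omega \vphi^*(x, |\psi|) \, dx$. On the singular part, the pointwise bound $|\psi(x)| \le \vphi^\infty(x)$ produces $\int_\Omega \psi : dE^s u \le \int_\Omega \vphi^\infty \, d|E^s u|$. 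Taking the supremum in $\psi$ closes this inequality.

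For the lower bound I would construct a sequence $(\psi_\varepsilon) \subset C^1_c(\Omega; \R^{n \times n}_\sym)$ whose contribution to $\vrho_{\widetilde{V}^{n \times n}_\vphi}(Eu)$ approaches the right-hand side. Pointwise, the optimal candidate at a Lebesgue point $x_0$ of the absolutely continuous part is an element of the subdifferential $\partial_t \vphi(x_0, |\E u(x_0)|)$ rotated along $\E u(x_0)/|\E u(x_0)|$, which makes Young's inequality sharp; on the support of $E^s u$ the candidate at a Besicovitch differentiation point $x_0$ is close to $\vphi^\infty(x_0) \Theta(x_0)$, where $\Theta = dE^s u / d|E^s u|$. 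The actual construction freezes the coefficients on a fine dyadic partition of $\Omega$, chooses an almost-optimal constant symmetric matrix on each cube, sums against a smooth partition of unity, and mollifies to obtain an admissible competitor. The restricted (VA1) condition is what allows one to replace $\vphi(x, t)$ by $\vphi(x_0, t)$ on each cube up to a vanishing error at points where $\vphi^\infty(x_0) < +\infty$, while (A0) and (aDec) control the behaviour under mollification-truncation. Passage to the limit combines Lebesgue differentiation for $x \mapsto \vphi(x, |\E u(x)|)$ with the Besicovitch theorem for the polar decomposition of $E^s u$.

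The hard part is the lower bound, and specifically the simultaneous construction of smooth competitors that are nearly optimal in both regimes while respecting the non-autonomous structure of $\vphi$: one has to glue the cube-wise frozen-coefficient constructions into a single $C^1_c$ test field without losing mass, and to bound the Young-defect contribution arising from the coefficient freezing by means of restricted (VA1). This is precisely the role of the Amar-type lemma developed in Appendix \ref{AmarLemma}, whose application is expected to supply the smoothing-and-gluing machinery bridging the pointwise/cube-wise constructions and the admissible smooth test functions prescribed in the definition of $\vrho_{\widetilde{V}^{n \times n}_\vphi}$.
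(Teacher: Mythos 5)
Your upper bound is essentially the paper's: integrate by parts, split $Eu = \mathcal{E}u\,\mathcal{L}^n + E^s u$, apply Young on the absolutely continuous part and the pointwise bound $|\psi| \le \vphi^\infty$ (Lemma \ref{lem:bound}) on the singular part. That part is sound.

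Your lower bound, however, diverges from the paper's architecture and contains a concrete misstep. The paper does \emph{not} run a single frozen-coefficient construction. It first proves Proposition \ref{prop:singularPart} (the singular part alone produces $\int_\Omega \vphi^\infty\,d|E^s u|$) and Proposition \ref{prop:exactFormulaAC} (the absolutely continuous part alone produces $\vrho_\vphi(|\mathcal{E}u|)$), and only then reduces Theorem \ref{thm:exactFormula} to showing that the supremum of the sum equals the sum of the suprema. The nontrivial step there is \emph{not} cube-wise gluing: given near-optimal $\psi_1,\psi_2 \in T^\vphi$ for the two sub-problems, one covers $\supp E^s u$ by finitely many small rectangles of small total Lebesgue measure, builds a cutoff $\theta$, sets $\psi_\varepsilon := \theta\psi_2 + (1-\theta)\psi_1$, and then uses convexity of $\vphi^*$, absolute continuity of the Lebesgue integral, and Lemma \ref{lem:bound} to show the interpolation error is $O(\varepsilon)$. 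Three cases (both integrals finite, $\vrho_\vphi$ infinite, $\int \vphi^\infty\,d|E^s u|$ infinite) must be handled separately; your sketch omits this entirely.

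The concrete error is your claimed role for Lemma \ref{lemmaApp} (Appendix \ref{AmarLemma}). That lemma is a pure convex-analysis identity, namely $(\phi \circ \|\cdot\|_X)^* = \phi^* \circ \|\cdot\|_{X^*}$ for even convex $\phi$ on a Banach space $X$. It supplies no smoothing, no gluing, and no approximation machinery whatsoever; it is used in this paper only in a remark in Section \ref{sec:funct_anal} to reconcile the present definition of the anisotropic total variation with the ones of Amar--Bellettini and Moll via the relation $\psi^*(x,\cdot) = \vphi^*(x,|\cdot|)$. The actual machinery that replaces your proposed cube-wise freezing lives in the proofs of Propositions \ref{prop:singularPart} and \ref{prop:exactFormulaAC} in Appendix \ref{app:proof_mod}: a lower semicontinuous envelope $h_k$ of $\vphi(\cdot,k)/k$ approximating $\vphi^\infty$ from below (this is where restricted (VA1) enters), approximations via sums of characteristic functions of the open level sets of $h_k$ for the singular part, and a $*_t$-mollification of $\vphi$ in the $t$-variable together with an $*_x$-mollification in the $x$-variable to reach Young's equality for the absolutely continuous part. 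A direct one-shot dyadic construction may well be feasible, but your sketch leaves the hard uniform-error estimates unproved and rests on a wrong lemma.
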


The following result is an immediate consequence of Theorem \ref{thm:exactFormula}, when the $\Phi$-function $\vphi$ does not depend on $x$, i.e. when it is a classical Orlicz $\Phi$-function.

\begin{corollary}
\label{ascor6.5EHH}
    Let $\vphi \in \Phi_c$ be independent of $x$ and satisfy $(aDec)$. If $u \in BD(\Omega)$, then
    \begin{equation*}
        \vrho_{\widetilde{V}^{n \times n}_\vphi}(Eu) = \vrho_\vphi(|\E u|) + \vphi^\infty |E^su|
    \end{equation*}
    in any Borel subset of $\Omega$ and so
    \begin{itemize}
        \item[(1)] $BD^\vphi(\Omega) = BD(\Omega)$ if $\vphi^\infty < \infty$,
        \item[(2)] $BD^\vphi(\Omega) = LD^\vphi(\Omega)$ if $\vphi^\infty = \infty$.
    \end{itemize}
    \qed
\end{corollary}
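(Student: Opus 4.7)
The plan is to apply Theorem~\ref{thm:exactFormula} and read off the two structural dichotomies from its explicit formula. The first step is to verify that every hypothesis of Theorem~\ref{thm:exactFormula} is automatic in the $x$-independent case: (aDec)$_q$ forces $\vphi$ to be finite on $[0,+\infty)$, and combined with convexity this yields continuity of $\vphi$ on $[0,+\infty)$; (A0) follows from $\vphi(0)=0$, $\lim_{t\to+\infty}\vphi(t)=+\infty$ and monotonicity by picking $\beta$ sufficiently small; finally both (A1) and restricted (VA1) reduce to monotonicity of $\vphi$, since the inequalities $\vphi(\beta t)\le \vphi(t)+1$ and $\vphi(t/(1+\omega(|x-y|)))\le \vphi(t)+\omega(|x-y|)$ hold trivially for any $\beta\le 1$ and any modulus of continuity $\omega$.

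Once Theorem~\ref{thm:exactFormula} is applicable, the constancy of $\vphi^\infty$ collapses the singular integral and delivers
\begin{equation*}
    \vrho_{\widetilde{V}^{n\times n}_\vphi}(Eu) = \vrho_\vphi(|\E u|) + \vphi^\infty\, |E^s u|(\Omega).
\end{equation*}
The version on an arbitrary Borel subset $B\subset\Omega$ should follow by observing that the argument behind Theorem~\ref{thm:exactFormula} actually compares Borel measures on $\Omega$ (the absolutely continuous part $\vphi(|\E u|)\mathcal{L}^n$ and the singular part $\vphi^\infty |E^s u|$), so restricting everything to $B$ in place of $\Omega$ is a purely measure-theoretic step.

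For (1), if $\vphi^\infty<+\infty$, then convexity together with $\vphi(0)=0$ gives the global linear bound $\vphi(t)\le \vphi^\infty t$. For $u\in BD(\Omega)$ this yields $\vrho_\vphi(|\E u|)\le \vphi^\infty\,\|\E u\|_{L^1}<+\infty$, the singular contribution $\vphi^\infty |E^s u|(\Omega)$ is finite, and $u\in L^1(\Omega)\subset L^\vphi(\Omega)$ on the bounded set $\Omega$. Hence $u\in BD^\vphi(\Omega)$, while the reverse inclusion follows from Remark~\ref{rmk:A0_si_no_BD} since (A0) is now automatic. For (2), if $\vphi^\infty=+\infty$, then any $u\in BD^\vphi(\Omega)$ has $\vrho_{\widetilde{V}^{n\times n}_\vphi}(\lambda Eu)<+\infty$ for some $\lambda>0$, which through the formula forces $\lambda\vphi^\infty|E^s u|(\Omega)<+\infty$, i.e.\ $|E^s u|(\Omega)=0$; thus $Eu=\E u\,\mathcal{L}^n$ with $\E u\in L^\vphi$ via the $\Delta_2$-type property from (aDec), so $u\in LD^\vphi(\Omega)$. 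Conversely any $u\in LD^\vphi(\Omega)$ has $E^s u=0$ and finite modular, hence $u\in BD^\vphi(\Omega)$.

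The main subtlety is bridging the finiteness of the dual modular $\vrho_{\widetilde{V}^{n\times n}_\vphi}(Eu)$ and the finiteness of the seminorm $\widetilde{V}^{n\times n}_\vphi(Eu)$ appearing in the definition of $BD^\vphi(\Omega)$. Since Lemma~\ref{lem:equivalence} is formally stated under the a priori assumption $u\in BD^\vphi(\Omega)$, I would bypass it with the direct Young-type bound $\widetilde{V}^{n\times n}_\vphi(Eu)\le \vrho_{\widetilde{V}^{n\times n}_\vphi}(Eu)+1$, obtained from $\vrho_{\vphi^*}(|\psi|)\le 1$ whenever $\|\psi\|_{\vphi^*}\le 1$ (as $\vphi^*\in\Phi_c$ by Lemma~\ref{lmm:prop_conj}(i)) together with the very definition of $\vrho_{\widetilde{V}^{n\times n}_\vphi}$.
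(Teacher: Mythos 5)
Your proof is correct and follows the natural route—which is also the paper's (the paper states the corollary without proof, calling it "an immediate consequence of Theorem~\ref{thm:exactFormula}"). Your verification that the hypotheses of Theorem~\ref{thm:exactFormula} collapse in the $x$-independent case is accurate: (aDec) forces finiteness (hence, with convexity, continuity), (A0) follows from the limits at $0$ and $+\infty$, and (A1) together with restricted (VA1) are vacuous for autonomous $\Phi$-functions by monotonicity. The Young-type bound $\widetilde{V}^{n\times n}_\vphi(Eu)\le \vrho_{\widetilde{V}^{n\times n}_\vphi}(Eu)+1$ is the right way to avoid the circularity in Lemma~\ref{lem:equivalence}, whose proof normalizes $\widetilde{V}^{n\times n}_\vphi(Eu)$ and so needs its finiteness a priori. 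One small imprecision: in case (1), the inclusion $L^1(\Omega)\subset L^\vphi(\Omega;\R^n)$ is not a consequence of $\Omega$ being bounded but of the linear growth bound $\vphi(t)\le\vphi^\infty t$, which is exactly what you derived from convexity and $\vphi(0)=0$; the boundedness of $\Omega$ is used only to place $u$ in $L^1(\Omega;\R^n)$ in the first place (via $u\in BD(\Omega)$). Likewise, in case (2) the appeal to a $\Delta_2$-type property is superfluous: the definition of $L^\vphi$ only requires $\vrho_\vphi(\lambda\,|\E u|)<+\infty$ for some $\lambda>0$, which is what the formula already gives.
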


We observe that in (2) we cannot say that $BD^\vphi(\Omega) = W^{1,\vphi}(\Omega)$, see e.g. \cite[Definition 3]{BHH18}, since the latter equality would follow from a Korn's inequality. But, as proven in \cite[Theorem 1.1]{BD12}, the latter, for classical convex $\Phi$-functions holds if and only if $\vphi$ satisfies both $\nabla_2$ and $\Delta_2$ conditions. Now, recall first that $\vphi \in \Phi_c$ implies that $\vphi= \vphi^{**}$ and that the $\nabla_2$ condition for $\vphi$ is equivalent, by \cite[Definition 2.4.10]{HHbook}, to $\vphi^*$ satisfying $\Delta_2$. Furthermore, \cite[Corollary 2.4.11]{HHbook} states that $\vphi \in \Phi_w$ satisfies $\nabla_2$ if and only if it satisfies $(aInc)$.  By assumption $\vphi$ satisfies $(aDec)$ but not $(aInc)$, thus $\vphi$ satisfies $\Delta_2$ but not $\nabla_2$. As a particular case, one can consider $\vphi(t) = t \log(1+t)$, see \cite[Remark 1.3]{HHbook}.

In order to prove Theorem \ref{thm:exactFormula} we need the following lemma, proven in \cite[Lemma 6.1]{EHH}, which shows the importance of the recession function $\vphi^\infty$, see Definition \ref{def:recession}.

\begin{lemma}
\label{lem:bound}
    Let $\Omega \subset \R^n$ be a bounded, connected and open set. Let $\vphi \in \Phi_c(\Omega)$ satisfy restricted (VA1). If $\psi \in C(\Omega)$ with $\vrho_{\vphi^*}(\psi) < +\infty$, then $|\psi| \le \vphi^\infty$.
    \qed
\end{lemma}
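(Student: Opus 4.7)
I would argue by contradiction. Suppose there exists $x_0 \in \Omega$ with $|\psi(x_0)| > \vphi^\infty(x_0)$. Since $\psi$ is continuous, hence finite-valued, at $x_0$, necessarily $\vphi^\infty(x_0) < +\infty$, which puts us in the regime where restricted (VA1) applies with $y = x_0$. The strategy is to prove that $\vphi^*(x, |\psi(x)|)$ grows at least like $|x-x_0|^{-n}$ near $x_0$, which fails to be integrable in $\R^n$ and thus contradicts $\vrho_{\vphi^*}(\psi) < +\infty$.

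First, fix intermediate values $\vphi^\infty(x_0) < s' < s < |\psi(x_0)|$. By continuity of $\psi$, there is $r>0$ with $B(x_0,r)\subset\Omega$ and $|\psi(x)|\ge s$ for every $x\in B(x_0,r)$; by monotonicity of $\vphi^*(x,\cdot)$ it suffices to show $\int_{B(x_0,r)}\vphi^*(x,s)\,dx=+\infty$. From the very definition of $\vphi^\infty(x_0)$ as a $\limsup$, there exists $T>0$ such that $\vphi(x_0,t)\le s' t$ for all $t>T$.

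The heart of the argument is a pointwise lower bound on $\vphi^*(x,s)$. Write $h:=|x-x_0|$, let $\omega$ be the modulus associated to a fixed $K>0$ in restricted (VA1), set $\alpha:=1+\omega(h)$, and substitute $\tau:=t/\alpha$ in (VA1), obtaining
\begin{equation*}
    \vphi(x,\tau)\le\vphi(x_0,\alpha\tau)+\omega(h)\qquad\text{whenever } \vphi(x_0,\alpha\tau)\le K h^{-n}.
\end{equation*}
Combining this, for $\alpha\tau>T$, with $\vphi(x_0,\alpha\tau)\le s'\alpha\tau$ gives
\begin{equation*}
    s\tau-\vphi(x,\tau)\ge (s-s'\alpha)\tau-\omega(h),
\end{equation*}
valid for every $\tau\le K/(s'\alpha h^n)$. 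Choosing $\tau=\tau^\star:=K/(s'\alpha h^n)$ (admissible once $h$ is small enough that $\tau^\star>T/\alpha$) and noting that $\alpha\to 1$ as $h\to 0$, so that $s-s'\alpha\ge(s-s')/2$ for $h$ small, the definition of the Fenchel conjugate yields
\begin{equation*}
    \vphi^*(x,s)\ge s\tau^\star-\vphi(x,\tau^\star)\ge\frac{C}{h^n}-\omega(h)
\end{equation*}
for some $C>0$ and every $x$ in a suitably small punctured neighborhood of $x_0$.

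Integrating this bound on a small ball $B(x_0,r')\subset B(x_0,r)$ and passing to polar coordinates centered at $x_0$, the $|x-x_0|^{-n}$ term produces a logarithmically divergent radial integral, while the $\omega$-term contributes a bounded quantity. Hence $\int_{B(x_0,r')}\vphi^*(x,|\psi(x)|)\,dx=+\infty$, contradicting $\vrho_{\vphi^*}(\psi)<+\infty$ and forcing $|\psi(x_0)|\le\vphi^\infty(x_0)$.

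The main obstacle is the simultaneous management of the three scales intrinsic to restricted (VA1): $\tau$ must be large enough to activate the asymptotic linearity $\vphi(x_0,t)\le s' t$; the side condition $\vphi(x_0,\alpha\tau)\le K h^{-n}$ must be respected; and yet $\tau$ must be allowed to reach the critical order $h^{-n}$ for $\vphi^*(x,s)$ to blow up at the borderline non-integrable rate. The explicit choice $\tau^\star=K/(s'\alpha h^n)$ meets all three constraints for small $h$, after which the non-integrability of $|x-x_0|^{-n}$ in dimension $n$ closes the contradiction.
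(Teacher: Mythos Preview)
The paper does not supply its own proof of this lemma; it cites \cite[Lemma~6.1]{EHH} and closes the statement with a \qed. Your argument is a correct direct proof: assuming $|\psi(x_0)|>\vphi^\infty(x_0)$ forces $\vphi^\infty(x_0)<+\infty$, so restricted (VA1) applies with $y=x_0$; it transfers the linear bound $\vphi(x_0,t)\le s't$ to neighboring points $x$ up to the threshold $\vphi(x_0,t)\le K|x-x_0|^{-n}$, and evaluating the Fenchel conjugate at the borderline scale $\tau^\star\sim|x-x_0|^{-n}$ gives $\vphi^*(x,s)\gtrsim|x-x_0|^{-n}$, which is not integrable near $x_0$ and contradicts $\vrho_{\vphi^*}(\psi)<+\infty$.

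One simplification: since $\vphi\in\Phi_c(\Omega)$ is convex with $\vphi(x_0,0)=0$, the map $t\mapsto\vphi(x_0,t)/t$ is non-decreasing, so $\vphi^\infty(x_0)=\sup_{t>0}\vphi(x_0,t)/t$ and the bound $\vphi(x_0,t)\le s't$ holds for \emph{all} $t>0$; the threshold $T$ is unnecessary. Also, in the final step it is cleaner to absorb the $\omega$-term before integrating: since $\omega(h)\to 0$ while $C h^{-n}\to+\infty$, one has $\vphi^*(x,s)\ge \tfrac{C}{2}|x-x_0|^{-n}$ on a sufficiently small punctured ball, and integrating this single term already yields the divergence, avoiding any ambiguity in subtracting a finite integral from an infinite one.
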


In \cite[Section 3 \& Example A.1]{HHL} it was shown that log-H\"older continuity is not sufficient when working in $BV^{p(\cdot)}$ and here, similarly, the (A1) condition (corresponding to log-H\"older continuity in the general case of convex $\Phi$-functions) is not sufficient in the previous result, as in the following ones, in view of \cite[Example 4.3]{EHH}. That is why we make use of the restricted (VA1) condition, which corresponds to strong log-H\"older continuity in the case of the variable exponent function, see \cite[Proposition 3.2]{EHH}. 
\\
In the next two results we consider the singular and absolutely continuous parts of the symmetric gradient separately. Then we combine them to handle the whole function in Theorem \ref{thm:exactFormula}. Since the proofs are similar to the ones of \cite[Propositions 6.2 \& 6.3]{EHH}, they can be found in our setting in Appendix \ref{app:proof_mod}.

\begin{proposition}
\label{prop:singularPart}
    Let $\Omega \subset \R^n$ be a bounded, connected and open set. Let $\vphi \in \Phi_c(\Omega)$ satisfy (A0), (aDec) and restricted (VA1). If $u \in BD(\Omega)$, then
    \begin{equation*}
        \sup_{\psi \in T^\vphi} \int_\Omega \psi : dE^su = \int_\Omega \vphi^\infty \, d|E^su|,
    \end{equation*}
    where 
    \begin{equation}
    \label{def_test}
        T^\vphi := \left\{ \psi \in C^1_c(\Omega; \R^{n \times n}_\sym) \st \vrho_{\vphi^*}(|\psi|) < +\infty \right\}.
    \end{equation}
    \qed
\end{proposition}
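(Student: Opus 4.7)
The plan is to prove the two inequalities separately. For the upper bound, fix any $\psi \in T^\vphi$. Since $\psi$ is continuous and $\vrho_{\vphi^*}(|\psi|) < +\infty$, applying Lemma \ref{lem:bound} to the continuous scalar function $|\psi|$ yields $|\psi(x)| \le \vphi^\infty(x)$ for every $x \in \Omega$. Writing the polar decomposition $E^s u = M \, |E^s u|$, with $M : \Omega \to \R^{n \times n}_\sym$ Borel measurable and $|M(x)| = 1$ for $|E^s u|$-a.e.\ $x$, and using the Cauchy--Schwarz inequality $|A : B| \le |A|\,|B|$ for the Frobenius inner product, I would compute
\begin{equation*}
    \int_\Omega \psi : dE^s u = \int_\Omega \psi(x) : M(x)\, d|E^s u|(x) \le \int_\Omega |\psi|\, d|E^s u| \le \int_\Omega \vphi^\infty\, d|E^s u|,
\end{equation*}
and taking the supremum over $\psi \in T^\vphi$ proves the $\le$ direction.

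For the reverse inequality, the natural target test field is $\vphi^\infty M$, which is inadmissible: $\vphi^\infty$ is only a $\limsup$ of continuous-in-$t$ functions (hence merely Borel in $x$) and $\vrho_{\vphi^*}(\vphi^\infty)$ is typically not finite. I would therefore build an admissible approximating sequence by a three-level scheme. First, truncate via $\vphi^\infty_N := \vphi^\infty \wedge N$ and localize on compactly contained subdomains $\Omega_j \Subset \Omega$. Second, for a parameter $\delta \in (0,1)$, use a Lusin-type argument to replace the bounded Borel field $(1-\delta) \vphi^\infty_N M \, \chi_{\Omega_j}$ by a continuous compactly supported field $g_{N,\delta,j}$ which agrees with it off a set of arbitrarily small $|E^s u|$-measure and satisfies the pointwise bound $|g_{N,\delta,j}(x)| \le (1-\delta)\vphi^\infty(x)$. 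Third, mollify $g_{N,\delta,j}$ to obtain $\psi_{N,\delta,j,\e} \in C^1_c(\Omega;\R^{n \times n}_\sym)$ and pass to the iterated limits $\e \to 0$, $j \to \infty$, $N \to \infty$ and $\delta \to 0$, using standard convergence of mollifications against Radon measures together with monotone and dominated convergence; the case $\int_\Omega \vphi^\infty\, d|E^s u| = +\infty$ is handled by the same truncation, since then the truncated integrals blow up along the sequence.

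The main technical obstacle is verifying that the smoothed fields lie in $T^\vphi$, i.e.\ that $\vrho_{\vphi^*}(|\psi_{N,\delta,j,\e}|) < +\infty$. In the borderline linear-growth regime, $\vphi^*(x,s) = +\infty$ as soon as $s > \vphi^\infty(x)$, so a strict pointwise bound $|g_{N,\delta,j}(x)| \le (1-\delta)\vphi^\infty(x)$ need not survive mollification at points where $\vphi^\infty$ is discontinuous. This is exactly the role of the restricted (VA1) hypothesis: it provides a quantitative continuity-type comparison between $\vphi(x,\cdot)$ and $\vphi(y,\cdot)$ for nearby $x,y$ (the restriction allowing one to bypass the degenerate pairs where both recession values are infinite), which dualizes to a usable estimate on $\vphi^*(x,s)$ in terms of $\vphi^*(y,\widetilde s)$ with $\widetilde s$ a controlled perturbation of $s$. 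Combined with (A0), which gives integrability of a uniform bound on compact sets, and with (aDec), which underlies the dual estimates via Lemma \ref{lmm:prop_conj}(ii), this secures membership in $T^\vphi$. Once that is in place, the remaining convergence arguments are routine and yield $\int_\Omega \psi_{N,\delta,j,\e} : dE^s u \to \int_\Omega \vphi^\infty\, d|E^s u|$, completing the proof.
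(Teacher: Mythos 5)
Your upper bound is correct and follows the paper's own argument (polar decomposition plus Lemma~\ref{lem:bound}). The gap is in the lower bound, specifically in the second step of your three-level scheme.

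You invoke a Lusin-type argument to replace the Borel field $(1-\delta)\vphi^\infty_N M\,\chi_{\Omega_j}$ by a continuous, compactly supported field $g_{N,\delta,j}$ that both agrees with it off a set of small $|E^s u|$-measure \emph{and} satisfies the pointwise bound $|g_{N,\delta,j}(x)| \le (1-\delta)\vphi^\infty(x)$ everywhere. Lusin plus Tietze gives you the first property, but not the second: on the small exceptional set the continuous extension is constrained only by a $\sup$-bound, whereas the pointwise ceiling $(1-\delta)\vphi^\infty$ is a merely Borel function of $x$ (a $\limsup$ in $t$), which in general is neither upper nor lower semicontinuous. A continuous function cannot be forced below a Borel obstacle on the complement of the compact agreement set. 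And since in the borderline regime $\vphi^*(x,s)=+\infty$ as soon as $s>\vphi^\infty(x)$, even a small overshoot on a set of positive Lebesgue measure destroys $\vrho_{\vphi^*}(|g|)<+\infty$. You flag this as ``the main technical obstacle'' and appeal to restricted (VA1) to patch it through the mollification, but you do not actually carry out that estimate, and restricted (VA1) only compares $\vphi(x,\cdot)$ and $\vphi(y,\cdot)$ when at least one of $\vphi^\infty(x),\vphi^\infty(y)$ is finite — it gives no help near points of $\{\vphi^\infty=+\infty\}$ where the truncation $\vphi^\infty\wedge N$ is discontinuous in a way that mollification cannot control. So as written the membership $\psi_{N,\delta,j,\e}\in T^\vphi$ is not established.

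The paper avoids this entirely by never trying to approximate $\vphi^\infty M$ directly. It introduces the \emph{lower semicontinuous} minorants $h_k(x) := \lim_{r\to 0^+}\inf_{y\in B(x,r)}\tfrac{\vphi(y,k)}{k}$, which satisfy $h_k\le \tfrac{\vphi(\cdot,k)}{k}\le\vphi^\infty$ by construction, $h_k\nearrow\vphi^\infty$ (this is where restricted (VA1) together with (A1) enters), and, crucially, $\vphi^*(\cdot,h_k)\le\vphi(\cdot,k)$ by Lemma~\ref{lmm:prop_conj}(iii), so $\vrho_{\vphi^*}(h_k)\le\vrho_\vphi(k)<+\infty$ under (A0) and (aDec). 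Lower semicontinuity makes the super-level sets $\{h_k>j/K\}$ open, so one can build the test field as a finite layer-cake sum $\psi_\e=\sum_j\tfrac1K\psi_j$ with each $\psi_j\in C^1_c$ supported in one such open set (via Lemma~\ref{lem:singularPart}), yielding $|\psi_\e|\le h_k$ \emph{automatically} and hence $\psi_\e\in T^\vphi$ with no continuity argument on $\vphi^*$ needed. This is the device that resolves exactly the obstacle you identified; if you want to rescue your scheme, replacing the Lusin step by approximation from below via a lower semicontinuous minorant of $\vphi^\infty$ is the natural fix, and it essentially reduces to the paper's argument.
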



\begin{proposition}
\label{prop:exactFormulaAC}
    Let $\Omega \subset \R^n$ be a bounded, connected and open set. Let $\vphi \in \Phi_c(\Omega) \cap C(\Omega \times [0,+\infty))$ satisfy (A0) and (aDec).
    If $u \in BD(\Omega)$, then 
    \begin{equation*}
        \sup_{\psi \in T^\vphi} \int_\Omega \left[\E u : \psi - \vphi^*(x,|\psi|) \right] dx = \vrho_\vphi(|\E u|).
    \end{equation*}
    \qed
\end{proposition}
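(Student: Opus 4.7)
The plan is to establish both inequalities of the claim separately. For the upper bound, I would apply Cauchy-Schwarz followed by the pointwise Fenchel-Young inequality associated to the conjugate pair $(\vphi,\vphi^*)$: for any $\psi \in T^\vphi$,
\begin{equation*}
    \E u(x) : \psi(x) \le |\E u(x)|\,|\psi(x)| \le \vphi(x,|\E u(x)|) + \vphi^*(x,|\psi(x)|) \quad \text{a.e. } x \in \Omega,
\end{equation*}
where the second step uses $\vphi^{**}=\vphi$ from Lemma~\ref{lmm:prop_conj}(iii). Integrating, rearranging, and taking the supremum over $T^\vphi$ yields $\sup_{\psi \in T^\vphi}(\,\cdots\,) \le \vrho_\vphi(|\E u|)$.

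For the opposite inequality I would explicitly construct near-optimal test functions. The Fenchel-Young equality motivates the choice
\begin{equation*}
    \psi^*(x) := q(x) \, \frac{\E u(x)}{|\E u(x)|} \quad \text{on } \{|\E u| > 0\}, \qquad \psi^* \equiv 0 \text{ elsewhere},
\end{equation*}
where $q(x)$ is a measurable selection from the subdifferential $\partial_t \vphi(x,|\E u(x)|)$; such a selection exists because $\vphi$ is convex and continuous in $t$, and one may simply take the right derivative $\vphi'_r(x, |\E u(x)|)$. The Fenchel-Young equality case then gives pointwise $\E u : \psi^* - \vphi^*(\cdot,|\psi^*|) = \vphi(\cdot,|\E u|)$ a.e. Since $\psi^*$ is merely measurable, I would truncate and mollify: fixing a compact exhaustion $\Omega_j \Subset \Omega$ and a parameter $M > 0$, set $\psi_{M,j} := \chi_{A_{M,j}} \psi^*$ on $A_{M,j} := \Omega_j \cap \{|\E u| \le M\}$. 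Joint continuity of $\vphi$ on $\Omega \times [0,+\infty)$ together with convexity in $t$ yield the uniform bound $|\psi_{M,j}| \le C_{M,j} := \sup_{x \in \Omega_j} \vphi'_r(x, M+1) < +\infty$, and standard mollification produces $\psi_{M,j}^\eta \in C^\infty_c(\Omega; \R^{n \times n}_\sym)$ satisfying the same $L^\infty$-bound and $\psi_{M,j}^\eta \to \psi_{M,j}$ almost everywhere as $\eta \to 0$.

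To pass to the limit, note that (A0) on $\vphi^*$, inherited from $\vphi$ via Lemma~\ref{lmm:prop_conj}(iv), yields $\vphi^*(\cdot, C_{M,j}) \in L^\infty(\Omega_j) \subset L^1(\Omega_j)$, so in particular $\psi_{M,j}^\eta \in T^\vphi$. Continuity of $\vphi^*(x,\cdot)$ wherever it is finite, combined with the uniform bound, then gives by dominated convergence
\begin{equation*}
    \lim_{\eta \to 0}\int_\Omega \bigl[\E u : \psi_{M,j}^\eta - \vphi^*(x,|\psi_{M,j}^\eta|)\bigr] \, dx = \int_{A_{M,j}} \vphi(x,|\E u|) \, dx,
\end{equation*}
and monotone convergence as $j \to \infty$ and $M \to \infty$ recovers $\vrho_\vphi(|\E u|)$, with the case $\vrho_\vphi(|\E u|) = +\infty$ handled identically since the truncated integrals then diverge. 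The \emph{main obstacle} is precisely the absence of $(aInc)$ on $\vphi$: consequently $\vphi^*$ fails the $\Delta_2$ condition, so general fields in $L^{\vphi^*}$ cannot be approximated by $C^1_c$-fields in $L^{\vphi^*}$-norm. The level-set truncation circumvents this by restricting the analysis to a uniformly bounded subset of $L^{\vphi^*}$, where convergence of the modular $\int \vphi^*(x,|\cdot|)\,dx$ reduces to dominated convergence via a trivial $L^\infty$-bound rather than density in $L^{\vphi^*}$.
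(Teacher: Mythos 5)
Your upper bound is fine (minor quibble: you don't need $\vphi^{**}=\vphi$ for Fenchel--Young, only convexity of $\vphi$, which holds since $\vphi\in\Phi_c(\Omega)$). The lower bound, however, has a genuine gap that the paper's proof is specifically engineered to avoid.

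The problem is your claim that the mollified truncations $\psi^\eta_{M,j}$ lie in $T^\vphi$, and that dominated convergence can be applied with dominating function $\vphi^*(\cdot,C_{M,j})$. Condition (A0) on $\vphi^*$ only gives $\vphi^*(x,\beta)\le 1$ for some fixed small $\beta\in(0,1]$; it gives \emph{no} control on $\vphi^*(x,t)$ for $t=C_{M,j}$, which will typically be large. In fact, in the central case this proposition is designed to handle — $\vphi(x,\cdot)$ with linear growth at some points, so that $\vphi^\infty(x)<+\infty$ there — one has $\vphi^*(x,t)=+\infty$ whenever $t>\vphi^\infty(x)$. Your unmollified candidate $\psi^*(x)=\vphi'_r(x,|\E u(x)|)\,\tfrac{\E u(x)}{|\E u(x)|}$ is safe because $|\psi^*(x)|\le\vphi^\infty(x)$ pointwise, but a spatial mollification destroys this: near a point $x_0$ where $\vphi^\infty(x_0)$ is small, the averaged value $|\psi^\eta_{M,j}(x_0)|$ can pick up contributions from nearby $y$ with $\vphi'_r(y,M)>\vphi^\infty(x_0)$, and then $\vphi^*(x_0,|\psi^\eta_{M,j}(x_0)|)=+\infty$. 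The extreme example $\vphi(x,t)=t$ on a subregion shows the boundary case already fails: there $|\psi^*|\equiv1=\vphi^\infty$ exactly, so \emph{any} mollification overshoots. Thus $\vrho_{\vphi^*}(|\psi^\eta_{M,j}|)$ can be $+\infty$, your test functions need not lie in $T^\vphi$, and $\vphi^*(\cdot,C_{M,j})$ is not a usable dominant.

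The paper's construction is built precisely to keep the spatially-mollified test function pointwise \emph{below} $\vphi'(x,|g(x)|)$ at each $x$, which is what makes $\vphi^*(x,|\psi(x)|)\le\vphi^*(x,\vphi'(x,|g(x)|))\le|g(x)|\vphi'(x,|g(x)|)\lesssim\vphi(x,|g(x)|)$ and hence the modular finite. Concretely: first the paper replaces $\E u$ by continuous $g$ via Fatou and $L^1$ approximation; then it mollifies $\vphi$ \emph{in the $t$-variable} to get $\psi'_\e\le\vphi'$ with $\psi'_\e\nearrow\vphi'$ and $\psi'_\e$ jointly continuous; then it subtracts $\e$ and uses uniform continuity on $\supp v_i$ to choose a spatial mollification radius $\delta$ small enough that the slack $\e$ absorbs the spatial averaging error. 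This chain of estimates is exactly the delicate part your level-set truncation skips, and (A0)/(aDec) alone do not rescue it. Your diagnosis of the obstacle (no $\Delta_2$ for $\vphi^*$, hence no density of smooth fields in $L^{\vphi^*}$) is correct, but the fix you propose — a uniform $L^\infty$ bound — is not strong enough: you need a \emph{pointwise $x$-dependent} bound tied to $\vphi'(x,\cdot)$, not a constant bound.
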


Note that removing the assumption that $\vphi$ is continuous in both variables is not trivial.
\\
Having the two previous propositions at hands, we can now prove Theorem \ref{thm:exactFormula}, exploiting the ideas used in \cite[Theorem 6.4]{EHH}.

\begin{proof}[Proof of Theorem \ref{thm:exactFormula}]
    Since $Eu = \E u + E^su$ and since $u \in BD(\Omega)$, an integration by parts implies that 
    \begin{equation*}
        \int_\Omega u \cdot E^*\psi \, dx = \int_\Omega \psi : dEu = \int_\Omega \E u : \psi \, dx + \int_\Omega \psi : dE^su
    \end{equation*}
    for any $\psi \in T^\vphi$. Hence, the claim follows from Propositions \ref{prop:singularPart} and \ref{prop:exactFormulaAC} once we prove that
    \begin{equation*}
    \begin{aligned}
        &\sup_{\psi \in T^\vphi} \left\{\int_\Omega [\E u : \psi - \vphi^*(x, |\psi|)] \, dx + \int_\Omega \psi : dE^su \right\} \\
        = &\sup_{\psi \in T^\vphi} \int_\Omega [\E u : \psi - \vphi^*(x, |\psi|)] \, dx + \sup_{\psi \in T^\vphi} \int_\Omega \psi : dE^su. 
    \end{aligned}
    \end{equation*}
    The first inequality, namely ``$\le$'', is obvious, so we only need to prove the opposite one.
    \\
    Let us first assume that $\vrho_\vphi(|\E u|) + \int_\Omega \vphi^\infty \, d|E^su| < +\infty$ and let us fix $\e > 0$. We can always choose $\psi_1, \psi_2\in T^\vphi$ such that 
    \begin{equation}
    \label{eq_aux:choice_1}
        \sup_{\psi \in T^\vphi} \int_\Omega [\E u : \psi - \vphi^*(x, |\psi|)] \, dx \le \int_\Omega [\E u : \psi_1 - \vphi^*(x, |\psi_1|)] \, dx + \e < +\infty
    \end{equation}
    and
    \begin{equation}
    \label{eq_aux:choice_2}
        \sup_{\psi \in T^\vphi} \int_\Omega \psi : dE^su \le \int_\Omega \psi_2 : dE^su + \e < +\infty.
    \end{equation}
    Since $u \in BD(\Omega)$ and $\psi_1, \psi_2 \in T^\vphi$, we have $|\E u|\,|\psi_i| \in L^1(\Omega)$ and $\vrho_{\vphi^*}(|\psi_i|) < +\infty$ for $i \in \{1,2\}$. Thus, by the absolute continuity of the integral, we can find $\delta > 0$ such that 
    \begin{equation}
    \label{eq_aux:stima_1_2}
        \left| \int_{\Omega \setminus \Omega_1} [\E u :  \psi_i - \vphi^*(x, |\psi_i|)] \, dx \right| \le \int_{\Omega \setminus \Omega_1} [|\E u| \, |\psi_i| + \vphi^*(x, |\psi_i|)] \, dx \le \e
    \end{equation}
    for $i \in \{1,2\}$ and any $\Omega_1 \subset \Omega$ with $|\Omega \setminus \Omega_1|< \delta$ and 
    \begin{equation*}
        \left|\int_{\Omega \setminus \Omega_2} \psi_2 : dE^su \, \right| \le \int_{\Omega \setminus \Omega_2} \vphi^\infty \, d|E^su| \le \e
    \end{equation*}
    for any $\Omega_2 \subset \Omega$ with $|E^su|(\Omega \setminus \Omega_2) < \delta$.
    \\
    Since $\supp \, E^su$ is a set with zero Lebesgue measure, we can find a finite collection of open rectangles $Q_i \subset \Omega$ such that $|E^su|(\bigcup_{i \in \N} Q_i) > |E^su|(\Omega) - \delta$ and $|\bigcup_{i \in \N} 2Q_i| < \delta$. Let us now choose $\theta \in C^1_c(\Omega)$ such that $0 \le \theta \le 1$, $\theta=1$ in $\Omega_2 := \bigcup_{i \in \N} Q_i$ and $\theta = 0$ in $\Omega_1 := \Omega \setminus \bigcup_{i \in \N} 2Q_i$, and let us fix $\psi_\e := \theta \, \psi_2 + (1 - \theta) \psi_1 \in C^1_c(\Omega; \R^{n\times n}_\sym)$. Since $\psi_\e$ is a pointwise convex combination, we get
    \begin{equation*}
        \vphi^*(\cdot, |\psi_\e|) \le \vphi^*(\cdot, \max\{|\psi_2|, |\psi_1|\}) \le \vphi^*(\cdot, |\psi_2|) + \vphi^*(\cdot, |\psi_1|).
    \end{equation*}
    This yields that $\vrho_{\vphi^*}(|\psi_\e|) \le \vrho_{\vphi^*}(|\psi_2|) + \vrho_{\vphi^*}(|\psi_1|) < +\infty$, yielding $\psi_\e \in T^\vphi$. We then get that
    \begin{equation*}
    \begin{aligned}
        \vrho_{\widetilde{V}^{n \times n}_\vphi}(Eu)
        &= \sup_{\psi \in T^\vphi} \left\{\int_\Omega [\E u : \psi - \vphi^*(x, |\psi|)] \, dx + \int_\Omega \psi : dE^su \right\} \\
        &\ge \int_\Omega [\E u : \psi_\e - \vphi^*(x, |\psi_\e|)] \, dx + \int_\Omega \psi_\e : dE^su \\
        &\ge \int_{\Omega_1} [\E u : \psi_1 - \vphi^*(x, |\psi_1|)] \, dx + \int_{\Omega_2} \psi_2 : dE^su - c_\theta \\
        &\ge \int_\Omega [\E u : \psi_1 - \vphi^*(x, |\psi_1|)] \, dx + \int_\Omega \psi_2 : dE^su - 5\e,
    \end{aligned}
    \end{equation*}
    where, thanks to \eqref{eq_aux:stima_1_2},
    \begin{equation*}
        c_\theta := \int_{\Omega \setminus \Omega_1} [|\E u| \, |\psi_\e| + \vphi^*(x, |\psi_\e|)] \, dx + \int_{\Omega \setminus\Omega_2} |\psi_\e|\,  d|E^su| \le 3\e
    \end{equation*}
    by the absolute integrability assumptions and by Lemma \ref{lem:bound}. By the choices of $\psi_1$ and $\psi_2$, namely \eqref{eq_aux:choice_1} and \eqref{eq_aux:choice_2}, we then get
    \begin{equation*}
        \vrho_{\widetilde{V}^{n \times n}_\vphi}(Eu) \ge \sup_{\psi \in T^\vphi} \int_\Omega [\E u : \psi - \vphi^*(x, |\psi|)] \, dx + \sup_{\psi \in T^\vphi} \int_\Omega \psi : dE^su - 7 \e.
    \end{equation*}
    The lower bound follows by letting $\e \to 0^+$. This concludes the proof in the case $\vrho_\vphi(|\E u|) + \int_\Omega \vphi^\infty \, d|E^su| < +\infty$.
    \\
    If, instead, $\vrho_\vphi(|\E u|) = +\infty$ and $\int_\Omega \vphi^\infty \, d|E^su| < +\infty$, we simply estimate
    \begin{equation*}
    \begin{aligned}
        &\sup_{\psi \in T^\vphi} \left\{ \int_\Omega [\E u : \psi - \vphi^*(x, |\psi|)] \, dx + \int_\Omega \psi : dE^su \right\} \\
        \ge &\sup_{\psi \in T^\vphi} \int_\Omega [\E u : \psi - \vphi^*(x, |\psi|)] \, dx - \sup_{\psi \in T^\vphi} \int_\Omega \psi : dE^su \\
        = & \ \vrho_\vphi(|\E u|) - \int_\Omega \vphi^\infty \, d|E^su| = +\infty. 
    \end{aligned}
    \end{equation*}
    We are only left to consider the case $\int_\Omega \vphi^\infty \, d|E^su| = +\infty$. By the proof of Proposition \ref{prop:singularPart}, see Appendix \ref{app:proof_mod}, there exists $\psi_\e \in C^1_c(\Omega; \R^{n \times n}_\sym)$ such that
    \begin{equation*}
        \int_\Omega \psi_\e : dE^su > \frac{1}{\e}
    \end{equation*}
    and $|\psi_\e| \le \frac{\vphi(\cdot, k)}{k}$ for some $k = k_\e$. For any function $\theta : \Omega \to [0,1]$, we have that 
    \begin{equation*}
        \E u : (\theta \psi_\e) - \vphi^*(\cdot, |\theta \psi_\e|) \ge \E u : (\theta \psi_\e) - \vphi^* \left(\cdot, \frac{\vphi(\cdot, k)}{k} \right) \ge - \left(\frac{\vphi^+(k)}{k} \, |\E u| + \vphi^+(k) \right),
    \end{equation*}
    see Lemma \ref{lmm:prop_conj}(iii). Since the function on the right-hand side is integrable, we can choose $\delta_k > 0$ such that its integral over any measurable $A$ with $|A| < \delta_k$ is at least $-1$. Furthermore, since $\supp \, E^su$ has measure zero, we can choose $\theta \in C^\infty_c(\Omega)$ as before to have support with Lebesgue measure at most $\delta_k$ and such that satisfies
    \begin{equation*}
        \int_\Omega (\theta \psi_\e) : dE^su > \frac{1}{2} \int_\Omega \psi_\e : dE^su > \frac{1}{2\e}. 
    \end{equation*}
    Then,
    \begin{equation*}
    \begin{aligned}
        &\sup_{\psi \in T^\vphi} \left\{\int_\Omega [\E u : \psi - \vphi^*(x, |\psi|)] \, dx + \int_\Omega \psi : dE^su \right\} \\
        \ge & \int_\Omega (\theta \psi_\e) : dE^su + \int_\Omega [\E u : (\theta \psi_\e) - \vphi^*(x, |\theta \psi_\e|)] \, dx \ge \frac{1}{2\e} - 1.
    \end{aligned}
    \end{equation*}
    Letting $\e \to +\infty$, the claim follows. This concludes the proof of the theorem.
\end{proof}


\subsection{The variable exponent case}
\label{sec:BD_px}

In this section we focus on the special $\Phi$-function
\begin{equation}
\label{nostraphi}
    \vphi(x,t) := \frac{1}{p(x)} \, t^{p(x)},
\end{equation}
with $p:\Omega \to [1,+\infty)$ a measurable function. Such  a $p$ is called a {\it variable exponent}. Let us recall that, choosing $\vphi$ as in \eqref{nostraphi} and considering the set
\begin{equation}
\label{set_superlin}
    Y := \{x \in \Omega \st p(x) = 1\},
\end{equation}
we have that the conjugate $\vphi^*$ is of the form
\begin{equation}
\label{phi*def}
    \varphi^*(x,t) =
    \begin{dcases}
        \tfrac{1}{q(x)} \, t^{q(x)} & \textnormal{ if } x \in \Omega \setminus Y, \\
        \infty \chi_{(1, \infty)}(t) & \textnormal{ if } x \in Y,
    \end{dcases}
\end{equation}
see \cite[Example 4.3]{EHH}, where $q$ is the function defined pointwise in $\Omega \setminus Y$ by $\tfrac{1}{q(x)} + \tfrac{1}{p(x)} = 1$ and with the convention that $\infty \cdot 0 = 0$. From now, given a set $A \subset \Omega$, we write
\begin{equation*}
    p^+_A := \underset{x \in A}{\textnormal{ess-sup} \ } p(x) \qquad \textnormal{ and } \qquad p^-_A := \underset{x \in A}{\textnormal{ess-inf} \ } p(x),
\end{equation*}
respectively $q^\pm_A$ for the conjugate exponent function $q$. In the following, we will assume $p^+_\Omega < +\infty$. We start by recalling the following notion, introduced in \cite{BHHZ}.

\begin{definition}
\label{slHY}
    A variable exponent $p$ is \emph{strongly $\log$-H\"older continuous in $Y$} if
    \begin{equation*}
        p(x)-1 \le \frac{\omega(|x-y|)}{\log(e+ \frac{1}{|x - y|})},
    \end{equation*}
    for some $\omega:[0,\infty)\to [0, \infty)$ with $\lim_{t \to 0}\omega(t)=0$ and every $y \in Y$ and $x\in\Omega$.
\end{definition}

At this point, it is worth observing that the above property is weaker than the 'classical $\log$-H\"older continuity' as stated in \cite[Definition 2.3]{BHHZ}, which is, in turn, equivalent to the restricted (VA1) condition, introduced in Definition \ref{restrVA1def}, for the function \eqref{nostraphi}. 
\\
In \cite{BHHZ} the following technical result has been proven.

\begin{lemma}
\label{lem:smallOnY}
    Suppose that $p$ is strongly $\log$-Hölder continuous in $Y$. If $\rho_{q(\cdot)}(w)<\infty$ for some continuous $w$, then $|w|\le 1$ in $Y$.
\end{lemma}

\begin{remark}
\label{rmk:prop_variable_exp}
    The explicit definition of the function $\vphi$ in \eqref{nostraphi} entails some additional properties other than the explicit form of its conjugate function given by \eqref{phi*def}. Indeed, it is immediate to see that $\vphi \in \Phi_c(\Omega)$, see Definition \ref{def:phi-functions} and \cite[Section 7.1]{HaH}. Moreover, arguing as in \cite[Lemma 7.1.1]{HaH}, it is easy to see that $\vphi$ satisfies (A0), (aInc)$_{p^-_\Omega}$ and (aDec)$_{p^+_\Omega}$, see Definition \ref{prop_Orlicz}.
\end{remark}

The first result we prove in this subsection is the equivalent of Theorem \ref{thm:exactFormula} in this specific setting. In particular, we show that the same conclusion of Theorem \ref{thm:exactFormula} holds under less restrictive hypotheses on our function $\vphi$. Before doing so, we need to recall the definition of the modular associated to the variable exponent space when $p^+_\Omega < +\infty$, given in an equivalent way to the one in \cite[Subsection 2.2]{HHL}, namely
\begin{equation*}
    \vrho_{L^{p(\cdot)}(\Omega; \R^{n \times n}_\sym)}(u) := \int_{\Omega} \frac{1}{p(x)} |u(x)|^{p(x)} \, dx
\end{equation*}
for any $u \in \M(\Omega; \R^{n \times n}_\sym)$. Now, the theorem reads as follows.

\begin{theorem}
\label{thm:eq_mod_px}
    Let $\Omega \subset \R^n$ be a bounded, connected and open set. Let $\vphi$ be defined as in \eqref{nostraphi} and let $p : \Omega \to [1, +\infty)$ be  continuous in $\Omega$ and strongly $\log$-H\"older continuous in $Y$, \color{black} with $p^+_\Omega < +\infty$. 
    If $u \in BD(\Omega)$, then 
    \begin{equation}
    \label{rho_old}
        \vrho_{\widetilde{V}^{n \times n}_\vphi}(Eu) = \vrho_{\textnormal{old}}(Eu) := |Eu|(Y) + \vrho_{L^{p(\cdot)}(\Omega \setminus Y; \R^{n \times n}_\sym)}(\E u).
    \end{equation}
\end{theorem}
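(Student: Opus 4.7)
The strategy is to exploit the explicit forms of $\vphi$ in \eqref{nostraphi} and of $\vphi^*$ in \eqref{phi*def} in order to compute the supremum defining $\vrho_{\widetilde{V}^{n\times n}_\vphi}(Eu)$ directly, rather than trying to invoke Theorem \ref{thm:exactFormula} whose continuity and restricted (VA1) hypotheses are not automatic when $p$ is merely lower semicontinuous. The first move is to restrict to those $\psi \in C^1_c(\Omega;\R^{n\times n}_\sym)$ for which $\int_\Omega \vphi^*(x,|\psi|)\,dx < +\infty$: by the second branch of \eqref{phi*def}, this forces $|\psi(x)| \le 1$ for $\mathcal{L}^n$-a.e.\ $x\in Y$, and then by continuity of $\psi$ together with the fact that $Y$ is closed, one obtains $|\psi| \le 1$ pointwise on $Y$. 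Meanwhile, on $\Omega\setminus Y$ the penalty becomes the standard variable-exponent modular $\int_{\Omega\setminus Y} |\psi|^{q(x)}/q(x)\,dx$.

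Second, I would prove the upper bound $\vrho_{\widetilde{V}^{n\times n}_\vphi}(Eu) \le \vrho_{\mathrm{old}}(Eu)$ by splitting
\begin{equation*}
    \int_\Omega \psi:dEu = \int_Y \psi : dEu + \int_{\Omega\setminus Y} \psi : \E u\,dx + \int_{\Omega\setminus Y} \psi : dE^s u.
\end{equation*}
The first piece is bounded by $\|\psi\|_{L^\infty(Y)}\,|Eu|(Y)\le |Eu|(Y)$. For the absolutely continuous piece I would apply the pointwise Young inequality with exponents $p(x),q(x)$, which exactly absorbs the penalty on $\Omega\setminus Y$ and yields $\vrho_{L^{p(\cdot)}(\Omega\setminus Y;\R^{n\times n}_\sym)}(\E u)$. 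The singular piece on $\Omega\setminus Y$ is handled exactly as in Proposition \ref{prop:singularPart}: since $\vphi^\infty \equiv +\infty$ on $\Omega\setminus Y$ (because $p(x) > 1$ there), either $|E^s u|(\Omega\setminus Y) = 0$, in which case this contribution vanishes, or both sides of the identity are $+\infty$ and the statement is trivial.

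Third, for the lower bound, I would build an almost optimal sequence by gluing two suboptimal test fields via a partition of unity, mimicking the cutoff construction at the end of the proof of Theorem \ref{thm:exactFormula}. On a neighborhood of $Y$ I take $\psi_1$ to be a smooth approximation of the polar of $Eu\mres Y$, satisfying $|\psi_1|\le 1$ and achieving $|Eu|(Y) - \varepsilon$; on $\Omega\setminus Y$ I take $\psi_2$ to be a $C^1_c$ mollification of the pointwise Young optimizer $|\E u|^{p(x)-2}\E u$, which realizes equality in Young's inequality up to an arbitrarily small error and hence delivers $\vrho_{L^{p(\cdot)}(\Omega\setminus Y)}(\E u) - \varepsilon$. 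Convex-combining $\psi_1$ and $\psi_2$ via a cutoff supported on a thin layer of small Lebesgue measure around $\partial Y$ (and using the absolute continuity of both $\vrho_\vphi(|\E u|)$ and $|Eu|(Y)$ to control the error there) produces an admissible $\psi_\varepsilon$ whose associated expression approaches $\vrho_{\mathrm{old}}(Eu)$ as $\varepsilon\to 0^+$.

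The main obstacle I expect is the transition across $\partial Y$: because $p$ is only lower semicontinuous, the explicit formula \eqref{phi*def} has a jump there, so the Young equality used on $\Omega\setminus Y$ must be matched with the pointwise constraint $|\psi|\le 1$ imposed from $Y$. The fact that $Y$ is closed (so that $\overline{Y}=Y$ and the constraint extends by continuity) and that $p^+_\Omega < +\infty$ (giving a uniform upper bound on $q$ away from $Y$, hence uniform integrability of the penalty) are the structural features that allow the cutoff-based gluing to work without requiring the restricted (VA1) hypothesis of Theorem \ref{thm:exactFormula}.
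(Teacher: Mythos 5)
Your overall strategy matches the paper's in spirit---Young's inequality supplies the upper bound and a cutoff-based gluing of two suboptimal test fields the lower bound---but you organize the argument by region ($Y$ versus $\Omega\setminus Y$), whereas the paper organizes it by measure type (absolutely continuous versus singular): it re-verifies Lemma~\ref{lem:bound} and Propositions~\ref{prop:singularPart} and~\ref{prop:exactFormulaAC} under the weakened hypotheses (lower semicontinuity of $p$, $p^+_\Omega<+\infty$, $Y$ closed) and then re-runs the gluing step of Theorem~\ref{thm:exactFormula}. The two decompositions produce the same closed formula because on $Y$ the $\Phi$-function is linear ($\vphi(x,t)=t$ and $\vphi^\infty\equiv 1$), so the absolutely continuous and singular contributions recombine into $|Eu|(Y)$. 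The paper's route has the advantage of making explicit where lower semicontinuity of $p$ is actually used: it replaces restricted (VA1) by the observation that $h_k=\vphi(\cdot,k)/k$ is already lower semicontinuous and increases to $\vphi^\infty$, while for the absolutely continuous part the density of $C^\infty_c(\Omega)$ in $L^\vphi(\Omega)$, guaranteed by (A0) and (aDec), substitutes for the continuity of $\vphi$ in both variables.

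Two steps in your sketch need more than you give them. First, $\psi_2$ as a ``$C^1_c$ mollification of the Young optimizer $|\E u|^{p(x)-2}\E u$'' is exactly the delicate point: that optimizer is generically unbounded, and mollifying it inside a variable-exponent modular, where $\vphi^*$ is not doubling, requires the careful two-parameter construction $\psi_{\e,i}$ from the proof of Proposition~\ref{prop:exactFormulaAC} (convolution in $t$ to smooth $\vphi'$, truncation, convolution in $x$ with radius tuned to a uniform-continuity modulus, then dominated convergence). Your one-line description would in effect have to reproduce that argument. Second, discarding the case $|E^su|(\Omega\setminus Y)>0$ by asserting that ``both sides of the identity are $+\infty$'' is unjustified for the right-hand side: $\vrho_{\textnormal{old}}(Eu)=|Eu|(Y)+\vrho_{L^{p(\cdot)}(\Omega\setminus Y;\R^{n\times n}_\sym)}(\E u)$ only sees the absolutely continuous part of $Eu$ on $\Omega\setminus Y$ and can remain finite regardless of $E^su$ there; this case therefore needs separate treatment (the paper addresses it by asserting that $E^su$ is concentrated on $Y$, a claim that itself deserves scrutiny). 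Finally, note that passing from $|\psi|\le 1$ $\mathcal{L}^n$-a.e.\ on $Y$ to $|\psi|\le 1$ pointwise on $Y$ via continuity of $\psi$ and closedness of $Y$ is vacuous when $\mathcal{L}^n(Y)=0$; the paper's adaptation of Lemma~\ref{lem:bound} shares this gap, so you are no worse off, but the pointwise (or at least $|E^su|$-a.e.) bound is precisely what the singular-part estimate requires.
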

\begin{proof}
    First, let us observe that the claim of Lemma \ref{lem:bound} is automatically satisfied in the case of $\vphi$ given by \eqref{nostraphi} with the only requirement of strong $\log$-H\"older continuity in $Y$ and \color{black} the assumption $p^+_\Omega$ is finite. Indeed, on the one hand, in view of \eqref{phi*def} and Definition \ref{def:recession} (which is a limit in this specific case), it suffices to prove the inequality in the set $Y$ given by \eqref{set_superlin}, since in $\Omega \setminus Y$ the recession function is $+\infty$. On the other hand though, whenever $\psi \in C(\Omega)$ with $\vrho_{\vphi^*}(|\psi|) < +\infty$, then by \eqref{phi*def} we have that $\vphi^*(x, |\psi|) = 0$ for almost every $x \in Y$, implying that $|\psi| \le 1=\vphi^\infty(x)$ in $Y$, in view of Lemma \ref{lem:smallOnY}, \color{black} which
    proves our claim.
    \\
    As a second step, let us remark that choosing $\vphi$ as in \eqref{nostraphi}, with $p(\cdot)$ continuous, \color{black}, Proposition \ref{prop:exactFormulaAC} is  valid. \color{black} 
    In fact, by assumption  $\vphi$ is continuous, and by \color{black} Remark \ref{rmk:prop_variable_exp}, we know that (A0) and (aDec) are satisfied.
   Hence, we can conclude that
    \begin{equation*}
        \sup_{\psi \in T^\vphi} \int_\Omega \left[\E u : \psi - \vphi^*(x,|\psi|) \right] \, dx = \frac{1}{p(x)} \int_\Omega |\E u|^{p(x)} \, dx
    \end{equation*}
    for any $u \in BD(\Omega)$, where $T^\vphi$ is defined as in \eqref{def_test}.
    \\
    We are left to prove that Proposition \ref{prop:singularPart} holds true under these hypotheses and choosing $\vphi$ as in \eqref{nostraphi}, namely that
    \begin{equation*}
        \sup_{\psi \in T^\vphi} \int_\Omega \psi : dE^su = \int_\Omega \vphi^\infty \, d|E^su|.
    \end{equation*}
    Note that the inequality "$\le$" is immediately verified thanks to the fact that Lemma \ref{lem:bound} holds under our assumptions. So it suffices to prove that
    \begin{equation*}
        \int_\Omega \varphi^\infty d |E^s u|\leq \sup_{\psi \in T^\varphi}\int_\Omega \psi : d E^s u.
    \end{equation*}
    Thanks to the continuity assumption on $p$ in $\Omega$, \color{black} we obtain that \eqref{leqhk} and \eqref{geqhk} hold. In particular, the second one holds for any $\beta \in [0,1]$, due to the non decreasing monotonicity of $(\cdot)^{p(x)}$. Observe that $h_k$ is lower semicontinuous by definition (it is  a lower semicontinuous envelope, see \cite[Chapter 1]{DM}) and increasing in $k$. Consequently the same arguments as in the proof of Proposition \ref{prop:singularPart} lead to the desired conclusion. Moreover, let us remark that for every $u \in BD(\Omega)$ it holds
    \begin{equation*}
        \sup_{\psi \in T^\vphi} \int_\Omega \psi : d E^s u = \int_Y d|E^s u| = |E^s u|(Y),
    \end{equation*}
    since $\vphi^\infty(x) = 1$ in $Y$ as observed before, which indeed says that $E^s u$ is concentrated on $Y$.
    \\
    Finally, following the same steps of the proof of Theorem \ref{thm:exactFormula}, since under our hypotheses the equivalent formulations of Lemma \ref{lem:bound}, Proposition \ref{prop:singularPart} and Proposition \ref{prop:exactFormulaAC} hold, we get the thesis.
\end{proof}

\begin{remark}
   As a corollary of Theorem \ref{thm:eq_mod_px}, under the same hypotheses we get the coincidence of the spaces $BD^\vphi$ and $BD \cap LD^{p(\cdot)}$, with
   \begin{equation*}
        LD^{p(\cdot)}(U) := \{u \in L^{p(\cdot)}(U; \R^n) \st Eu \in L^{p(\cdot)}(U;\R^{n \times n}_\sym)\},
    \end{equation*}
    when $\vphi$ is as in \eqref{nostraphi}. Alternatively, this latter equality can be proven directly arguing as in \cite[Theorem 3.8]{BHHZ}  even removing the continuity hypotesis on $p(\cdot)$, replacing it by lower semicontinuity \color{black}.
\end{remark}

\section{The imaging problem}
\label{sec:imaging}

In this section we finally deal with the novel regularizer based on a Musielak-Orlicz generalization of classical Total Generalized Variation functionals that we defined in \eqref{TGV}, namely the anisotropic Total Generalized Variation of order $2$, with weight $\alpha = (\alpha_1, \alpha_2) \in \R^+ \times \R^+$, given by
\begin{equation*}
\begin{aligned}
    TGV^{\vphi,2}_\alpha(u):= \sup\bigg\{ &\int_\Omega u \,\div^2 \psi \, dx \st \\
    & \ \psi \in C^2_c (\Omega; \R^{n \times n}_\sym), \|\psi\|_{\vphi^*} \le \alpha_1, \|\div \, \psi\|_{\vphi^*} \le \alpha_2 \bigg\},
\end{aligned}
\end{equation*}
where we consider $u \in L^1_\loc(\Omega)$.
\\
In Subsection \ref{sec:tgv_prop} we prove some properties of this functional, while in Subsection \ref{sec:dual_formulations} we give a dual characterization of this functional exploiting the space $BD^\vphi$ we defined in Section \ref{sec:funct_anal}. Finally, in Subsection \ref{sec:ex_stab} we show how the anisotropic Total Generalized Variation inherits classical existence and stability properties.


\subsection{Basic properties of \texorpdfstring{$TGV^{\vphi,2}_\alpha(u)$}{TGV}}
\label{sec:tgv_prop}

Following the lines of \cite{BKP10}, we define the space of functions of  bounded variation of order $2$ with Orlicz growth and weight $\alpha$ as
\begin{equation}
\label{def:BGV}
    BGV^{\vphi,2}_\alpha(\Omega) := \{ u \in L^\vphi(\Omega) \st TGV^{\vphi,2}_\alpha(u) < +\infty \},
\end{equation}
endowed with the norm
\begin{equation*}
    \|u\|_{BGV^{\vphi,2}_\alpha} := \|u\|_\vphi + TGV^{\vphi,2}_\alpha(u),
\end{equation*}
where the functional $TGV^{\vphi,2}_\alpha$ is defined as in \eqref{TGV}.

\begin{remark}
    Note that it would also be possible to define the functionals $TGV^{\vphi,k}_\alpha$ and the spaces $BGV^{\vphi,k}_\alpha(\Omega)$ for general $k \in \N^+$ following the lines of \cite{BKP10}. Moreover, for $k = 1$ and $\alpha \in \R^+$, and chosen $u \in L^1_\loc(\Omega)$, we would have
    \begin{equation*}
        TGV^{\vphi,1}_\alpha(u) = \sup \left\{\int_\Omega u \, \div \psi \, dx \st \psi \in C^1_c (\Omega; \R^n), \|\psi\|_{\vphi^*} \le \alpha \right\} = \alpha V_\vphi(u),
    \end{equation*}
    where the right-hand side is defined as in \eqref{def:EHH_TV}. Hence, we can indeed speak of a generalization of an anisotropic total variation.  Contrary to Remark \ref{rk:v-equiv}, which was true for $u \in BV(\Omega)$ in order to know at least that $Du \in \M(\Omega;\R^n)$, here it is enough to take $u \in L^1_{\loc}(\Omega)$.
\end{remark}

In the following result we prove some basic properties of the anisotropic Total Generalized Variation.

\begin{proposition}
\label{prop:TGV_prop}
    Let $\Omega$ be a bounded, connected and open set in $\R^n$, with $n \ge 2$. Let $\vphi \in \Phi_w(\Omega)$ satisfy (A0) and let $\alpha = (\alpha_1, \alpha_2) \in \R^+ \times \R^+$. Then, the following statements hold:
    \begin{itemize}
        \item[(i)] $TGV^{\vphi,2}_\alpha$ is a seminorm on the normed space $BGV^{\vphi,2}_\alpha(\Omega)$.
        
        \item[(ii)] For $u \in L^1_\loc(\Omega)$, there holds $TGV^{\vphi,2}_\alpha(u) = 0$ if and only if $u$ is a polynomial of degree $0$ or $1$.
        
        \item[(iii)] For eevery $\tilde\alpha = (\tilde\alpha_1, \tilde\alpha_2) \in \R^+ \times \R^+$, the seminorms $TGV^{\vphi,2}_\alpha$ and $TGV^{\vphi,2}_{\tilde\alpha}$ are equivalent.
        
        \item[(iv)] $TGV^{\vphi,2}_\alpha$ is rotationally invariant, namely for any matrix $R \in SO(n)$ and $u \in BGV^{\vphi,2}_\alpha(\Omega)$ we can define $\tilde{u}(x) := u(Rx)$ and we have that $\tilde{u} \in BGV^{\vphi,2}_\alpha(R^\T\Omega)$ and $TGV^{\vphi,2}_\alpha(\tilde{u}) = TGV^{\vphi,2}_\alpha(u)$.
        
        \item[(v)] For $r > 0$ and $u \in BGV^{\vphi,2}_\alpha(\Omega)$, define $\bar{u}(x) := u(rx)$. Then $\bar{u} \in BGV^{\vphi,2}_\alpha(r^{-1}\Omega)$ and
        \begin{equation*}
            TGV^{\vphi,2}_\alpha(\bar{u}) = r^{-n} \, TGV^{\vphi,2}_{\bar\alpha}(u) \qquad \textnormal{ with } \bar\alpha = (\bar\alpha_1, \bar\alpha_2) := (\alpha_1 r^2, \alpha_2 r).
        \end{equation*}
    \end{itemize}
\end{proposition}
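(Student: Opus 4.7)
The plan is to treat the five assertions in order, with (i), (iii), (iv) and (v) arising as formal consequences of the defining supremum via symmetry, homogeneity, or change of variables, while (ii) will need a distributional argument. A recurring ingredient will be that hypothesis (A0) on $\vphi$, combined with Lemma \ref{lmm:prop_conj}(iv), also gives (A0) for $\vphi^*$, so that Lemma \ref{lmm:inclusions} yields $L^\infty(\Omega)\hookrightarrow L^{\vphi^*}(\Omega)$; in particular every $\psi\in C^2_c(\Omega;\R^{n\times n}_\sym)$ has $\|\psi\|_{\vphi^*}+\|\div\,\psi\|_{\vphi^*}<+\infty$, so upon multiplication by a sufficiently small positive constant it can always be made admissible in \eqref{TGV}.

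For (i), I would derive the homogeneity $TGV^{\vphi,2}_\alpha(au)=|a|\,TGV^{\vphi,2}_\alpha(u)$ from the fact that $-\psi$ is admissible whenever $\psi$ is, and I would obtain the triangle inequality, exactly as in Lemma \ref{lmm:seminorm}, by estimating the supremum of a sum by the sum of suprema; the norm property of $\|\cdot\|_{BGV^{\vphi,2}_\alpha}$ would then reduce to $\|\cdot\|_\vphi$ being a norm on $L^\vphi(\Omega)$. For (iii), given two weights $\alpha$ and $\tilde\alpha$, the constant $\mu:=\min\{\alpha_1/\tilde\alpha_1,\alpha_2/\tilde\alpha_2\}$ sends any $\tilde\alpha$-admissible $\psi$ to an $\alpha$-admissible $\mu\psi$ and yields $TGV^{\vphi,2}_\alpha(u)\ge\mu\,TGV^{\vphi,2}_{\tilde\alpha}(u)$; swapping the roles of $\alpha$ and $\tilde\alpha$ will give the equivalence. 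For (iv), given $\tilde\psi\in C^2_c(R^\T\Omega;\R^{n\times n}_\sym)$ I would set $\psi(y):=R\,\tilde\psi(R^\T y)\,R^\T$ on $\Omega$ and compute via the chain rule $\div\,\psi(y)=R\,(\div\,\tilde\psi)(R^\T y)$ and $\div^2\psi(y)=(\div^2\tilde\psi)(R^\T y)$, so that the invariance of the Frobenius and Euclidean norms under orthogonal conjugation translates the two constraints from one domain to the other; the change of variable $x=R^\T y$ then matches the two energies (this tacitly requires the relevant modular to have the corresponding rotational symmetry in the spatial variable). For (v), setting $\psi(y):=\bar\psi(y/r)$ gives $\div\,\psi(y)=r^{-1}(\div\,\bar\psi)(y/r)$ and $\div^2\psi(y)=r^{-2}(\div^2\bar\psi)(y/r)$, and combining this with the change of variable $y=rx$ and a careful renormalization of test functions to match the two pairs of constraints will yield the identity $TGV^{\vphi,2}_\alpha(\bar u)=r^{-n}\,TGV^{\vphi,2}_{\bar\alpha}(u)$ with $\bar\alpha=(r^2\alpha_1,r\alpha_2)$.

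For (ii), the ``if'' direction is a double integration by parts: for affine $u$ one has $\int_\Omega u\,\div^2\psi\,dx=0$ for every $\psi\in C^2_c(\Omega;\R^{n\times n}_\sym)$, hence $TGV^{\vphi,2}_\alpha(u)=0$. Conversely, from $TGV^{\vphi,2}_\alpha(u)=0$ together with the rescaling observation above I would deduce $\int_\Omega u\,\div^2\psi\,dx=0$ for every $\psi\in C^2_c(\Omega;\R^{n\times n}_\sym)$. Testing with $\psi_{ii}=\phi$ and all other entries zero (for arbitrary $\phi\in C^2_c(\Omega)$) will give $\partial^2_{ii}u=0$ in the sense of distributions, and testing with $\psi_{ij}=\psi_{ji}=\phi$ and all other entries zero for $i<j$ will give $\partial^2_{ij}u=0$. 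Hence the full distributional Hessian of $u$ vanishes on the connected open set $\Omega$, which forces $u$ to be a polynomial of degree at most one.

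The main obstacle I anticipate is the converse in (ii): its validity hinges on the rescaling argument and thus ultimately on the embedding $L^\infty(\Omega)\hookrightarrow L^{\vphi^*}(\Omega)$ provided by (A0), which is essential because without such an embedding one could not a priori bring an arbitrary smooth compactly supported test field into the admissible class by rescaling. A secondary, mostly notational subtlety will arise in (iv) and (v), where the $x$-dependence of $\vphi$ requires that the relevant modular be transported consistently along the change of variables (or that $\vphi$ be $x$-independent) in order for the claimed identities to hold literally.
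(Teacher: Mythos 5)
Your proposal follows the paper's proof essentially step for step: (i) reduces the seminorm properties to (N2)--(N3) and the norm of $\|\cdot\|_\vphi$ under (A0); (ii) uses two integrations by parts for the ``if'' direction and, for the converse, exploits (A0), Lemma \ref{lmm:prop_conj}(iv), and Lemma \ref{lmm:inclusions} to rescale any $\psi\in C^2_c(\Omega;\R^{n\times n}_\sym)$ into the admissible class, concluding that the distributional Hessian vanishes on the connected set $\Omega$; (iii) rescales admissible test fields between the two constraint sets; (iv)--(v) are changes of variable for rotations and dilations. Your explicit choice of test fields in (ii) ($\psi_{ii}=\phi$, then $\psi_{ij}=\psi_{ji}=\phi$) is a helpful detail the paper leaves implicit, and your constant $\mu=\min\{\alpha_1/\tilde\alpha_1,\alpha_2/\tilde\alpha_2\}$ in (iii) is correct (the paper's $c=\max\{\alpha_1,\alpha_2\}/\min\{\tilde\alpha_1,\tilde\alpha_2\}$ reads like a typo for its reciprocal, but the idea is identical).

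Your caveat on (iv)--(v), however, should be upgraded from a ``mostly notational subtlety'' to a genuine gap that the paper shares. The paper's proof of (iv) uses the identity $\|(\div^l\psi)\circ R^\T\|_{\vphi^*}=\|\div^l\psi\|_{\vphi^*}$, which after the change of variable $y=R^\T x$ requires $\vphi^*(Ry,t)=\vphi^*(y,t)$, i.e.\ rotational invariance of $\vphi^*$ in the spatial variable; for a general $\vphi\in\Phi_w(\Omega)$ this fails, and in fact $\vphi$ is not even defined on $R^\T\Omega$, so the transported $\Phi$-function must be used for the statement to make sense at all. For (v) the difficulty is sharper: the paper claims $\|\psi\circ r^{-1}\I\|_{\vphi^*,\Omega}=\|\psi\|_{\vphi^*,r^{-1}\Omega}$, but the change of variable produces a Jacobian factor $r^n$ inside the integral defining the Luxemburg norm, which does not pull out as a multiplicative constant for a general $\vphi^*$ (and does not vanish even if $\vphi$ is $x$-independent). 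In the classical $TGV^2$ setting these problems are invisible because the constraint norm is $\|\cdot\|_\infty$, a pointwise supremum that is invariant under bijective changes of variable; replacing it with a Luxemburg norm breaks that invariance. So your instinct in flagging this was sound, but the issue is not merely about consistently relabeling the modular; as written, (iv) and (v) hold literally only under additional invariance assumptions on $\vphi$ that neither you nor the paper state.
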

\begin{proof}
    {\it (i).} Thanks to the definition of the functional $TGV^{\vphi,2}_\alpha$ given in \eqref{TGV}, it is obvious to see that the two conditions (N2) and (N3) in Definition \ref{def:norms} hold, implying that $TGV^{\vphi,2}_\alpha$ is a seminorm. Note that $BGV^{\vphi,2}_\alpha(\Omega)$ is a normed space because $\vphi \in \Phi_w(\Omega)$ and satisfies (A0), so $\|\cdot\|_\vphi$ is a norm, see \cite[Theorem 3.7.13]{HaH}.
    \\
    {\it (ii).} Let us denote the set of the test functions for the functional $TGV^{\vphi,2}_\alpha$ by
    \begin{equation*}
        K^{\vphi,2}_\alpha(\Omega) := \{ \psi \in C^2_c (\Omega; \R^{n \times n}_\sym) \st \|\psi\|_{\vphi^*} \le \alpha_1, \|\div \, \psi\|_{\vphi^*} \le \alpha_2 \}.
    \end{equation*}
    Let us first assume that $u$ is a polynomial of degree $0$ or $1$, implying that $\nabla^2 u = 0$. Then, since the test function $\psi$ is in $K^{\vphi,2}_\alpha(\Omega)$ and thanks to two integration by parts, it is obvious that $TGV^{\vphi,2}_\alpha(u) = 0$.
    \\
    Suppose now that $TGV^{\vphi,2}_\alpha(u) = 0$ for some $u \in L^1_\loc(\Omega)$. For any $\psi \in C^2_c (\Omega; \R^{n \times n}_\sym)$, one can find a $\lambda > 0$ such that
    \begin{equation*}
        \|\lambda \psi\|_\infty \le \alpha_1 \quad \textnormal{ and } \quad \|\div (\lambda \psi)\|_\infty \le \alpha_2.
    \end{equation*}
    Since $\vphi$ satisfies (A0), meaning that $\vphi^*$ satisfies (A0) as well, see Lemma \ref{lmm:prop_conj}(iv), and since $\Omega$ is bounded, thanks to Lemma \ref{lmm:inclusions} we have that $\lambda\psi \in K^{\vphi,2}_\alpha(\Omega)$. Testing in \eqref{TGV} with $\lambda\psi$, we then get
    \begin{equation*}
        \int_\Omega u \,\div^2 \psi \, dx = 0 \qquad \textnormal{ for any } \psi \in C^2_c (\Omega; \R^{n \times n}_\sym),
    \end{equation*}
    implying that $\nabla^2 u = 0$ in the weak sense. Since $\Omega$ is connected, this implies that $u$ has to be a polynomial of degree less than $2$.
    \\
    {\it (iii).} The equivalence of the seminorms $TGV^{\vphi,2}_\alpha$ and $TGV^{\vphi,2}_{\tilde\alpha}$ can be proven noticing that, chosen
    \begin{equation*}
        c := \frac{\max\{\alpha_1, \alpha_2\}}{\min\{\tilde\alpha_1, \tilde\alpha_2\}},
    \end{equation*}
    we have that $c \, K^{\vphi,2}_\alpha(\Omega) \subset K^{\vphi,2}_{\tilde\alpha}(\Omega)$, meaning that $c \, TGV^{\vphi,2}_\alpha(u) \le TGV^{\vphi,2}_{\tilde\alpha}(u)$ for any $u \in L^1_\loc(\Omega)$. Interchanging the roles of $\alpha$ and $\tilde\alpha$ gives us the desired equivalence.
    \\
    {\it (iv).} Thanks to \cite[(A.4)]{BKP10}, we know that given a matrix $R \in SO(n)$, then
    \begin{equation*}
        \psi \in C^2_c (R^\T\Omega; \R^{n \times n}_\sym) \quad \iff \quad \widetilde\psi := (\psi \circ R^\T)R^\T \in C^2_c (\Omega; \R^{n \times n}_\sym),
    \end{equation*}
    with $\div^l \, \widetilde\psi = ((\div^l \, \psi) \circ R^\T)R^\T$, with $l = 0,1$ where $\div^0 = id$. Hence, for $l = 0,1$,
    \begin{equation*}
        \|\div^l \, \widetilde\psi\|_{\vphi^*} = \| ((\div^l \, \psi) \circ R^\T)R^\T \|_{\vphi^*} = \| (\div^l \, \psi) \circ R^\T \|_{\vphi^*} = \| \div^l \, \psi \|_{\vphi^*},
    \end{equation*}
    implying that $\psi \in K^{\vphi,2}_\alpha(R^\T\Omega)$ if and only if $\widetilde\psi \in K^{\vphi,2}_\alpha(\Omega)$. Lastly, for any $\psi \in K^{\vphi,2}_\alpha(R^\T\Omega)$ we have
    \begin{equation*}
        \int_{R^\T\Omega} u(Rx) \,\div^2 \psi(x) \, dx = \int_\Omega u(x) \,\div^2 \psi(R^\T x) \, dx = \int_\Omega u(x) \, \div^2 \widetilde\psi(x) \, dx,
    \end{equation*}
    yielding $TGV^{\vphi,2}_\alpha(u \circ R) = TGV^{\vphi,2}_\alpha(u)$.
    \\
    {\it (v).} Note that
    \begin{equation*}
        \div (v \circ r^{-1} \I) = r^{-1} (\div \, v) \circ r^{-1} \I,
    \end{equation*}
    implying that, chosen $\psi \in C^l_c(r^{-1}\Omega;\R^{2 \times 2}_\sym)$ and $\bar\psi := r^2 \, \psi \circ r^{-1} \I$, we have
    \begin{equation*}
        \| \div^l \bar\psi \|_{\vphi^*} = r^{2-l} \| \div^l \psi \|_{\vphi^*},
    \end{equation*}
    for $l = 0,1$. As a consequence, $\psi \in K^{\vphi,2}_\alpha(r^{-1} \Omega)$ if and only if $\bar\psi \in K^{\vphi,2}_{\bar\alpha}(\Omega)$ with $\bar\alpha = (\bar\alpha_1, \bar\alpha_2) := (\alpha_1 r^2, \alpha_2 r)$. Lastly, this yields
    \begin{equation*}
        \int_{r^{-1} \Omega} u(rx) \,\div^2 \psi(x) \, dx = r^{-n} \int_\Omega u(x) \,\div^2 \psi(r^{-1} x) \, dx = \int_\Omega u(x) \, \div^2 \bar\psi(x) \, dx,
    \end{equation*}
    implying $TGV^{\vphi,2}_\alpha(u \circ r \, \I) = r^{-n} \, TGV^{\vphi,2}_{\bar\alpha}(u)$.
\end{proof}

Note that it is possible to prove that $TGV^{\vphi,2}_\alpha$ is weakly lower semicontinuous with respect to the weak $L^p$ convergence, as well as to the weak* convergence in $L^\infty$.

\begin{lemma}
\label{lmm:lsc_TGV}
    Let $\Omega$ be a bounded set in $\R^n$ and let $\vphi \in \Phi_w(\Omega)$. Then, 
    $TGV^{\vphi,2}_\alpha$ is weakly lower semicontinuous in $L^p(\Omega)$ for every $1 \le p < +\infty$, as well as weak* lower semicontinuous in $L^\infty(\Omega)$ for any $\alpha \in \R^+ \times \R^+$.
\end{lemma}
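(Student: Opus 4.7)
The plan is to use the fact that $TGV^{\vphi,2}_\alpha$ is by construction a pointwise supremum of linear functionals that are continuous in each of the topologies in question, and then invoke the standard result that a pointwise supremum of lower semicontinuous functions is lower semicontinuous (allowing the value $+\infty$).

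More concretely, I would first reformulate the definition \eqref{TGV} as
\begin{equation*}
    TGV^{\vphi,2}_\alpha(u) = \sup_{\psi \in K^{\vphi,2}_\alpha(\Omega)} L_\psi(u), \qquad L_\psi(u) := \int_\Omega u \, \div^2 \psi \, dx,
\end{equation*}
where $K^{\vphi,2}_\alpha(\Omega) = \{\psi \in C^2_c(\Omega;\R^{n\times n}_\sym)\st \|\psi\|_{\vphi^*}\le\alpha_1,\ \|\div\,\psi\|_{\vphi^*}\le\alpha_2\}$, as already introduced in the proof of Proposition \ref{prop:TGV_prop}. The key observation is that for every admissible $\psi$, $\div^2 \psi$ is a continuous function with compact support in $\Omega$. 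Since $\Omega$ is bounded, this yields $\div^2\psi \in L^q(\Omega)$ for every $q \in [1,+\infty]$.

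Next I would use this integrability to show that each $L_\psi$ is continuous in the desired sense. For $1 < p < +\infty$, $\div^2\psi \in L^{p'}(\Omega)$ exhibits $L_\psi$ as a continuous linear functional on $L^p(\Omega)$ endowed with the weak topology; for $p=1$, the bound $\div^2\psi \in L^\infty(\Omega)$ yields continuity with respect to weak $L^1$-convergence; finally, $\div^2\psi \in L^1(\Omega)$ gives continuity of $L_\psi$ on $L^\infty(\Omega)$ equipped with the weak* topology. In each case, each $L_\psi$ is in particular lower semicontinuous for the corresponding topology.

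The conclusion then follows by taking the supremum: as the pointwise supremum of a family of lower semicontinuous (indeed continuous) functions, $TGV^{\vphi,2}_\alpha$ is lower semicontinuous for weak $L^p$-convergence for every $1 \le p < +\infty$ and for weak* $L^\infty$-convergence. I do not foresee any real obstacle here; the only tiny care is the fact that the supremum is extended real-valued on $L^1_{\loc}(\Omega)$, but this is harmless since lower semicontinuity is preserved under pointwise suprema of $[-\infty,+\infty]$-valued functions.
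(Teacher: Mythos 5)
Your proof is correct and takes essentially the same route as the paper: in both cases one observes that $\div^2\psi$ is a bounded (hence $L^q$ for every $q\le\infty$ on the bounded set $\Omega$) test function, so that each $L_\psi$ is continuous for weak $L^p$ (resp.\ weak* $L^\infty$) convergence, and then one takes the supremum over $\psi\in K^{\vphi,2}_\alpha(\Omega)$. The only cosmetic difference is that the paper carries out the argument along a given weakly convergent sequence rather than phrasing it abstractly as ``a supremum of lower semicontinuous functionals is lower semicontinuous''.
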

\begin{proof}
    We only prove the lemma for $p \in [1, +\infty)$, as the case $p = +\infty$ is analogous.
    \\
    Chosen $\psi \in C^2_c(\Omega; \R^{n \times n}_\sym)$ with $\|\psi\|_{\vphi^*} \le \alpha_1$ and $\|\div \, \psi\|_{\vphi^*} \le \alpha_2$ as in \eqref{TGV}, then $\div^2 \psi \in L^\infty(\Omega)$. Then, chosen $(u_k)_k \subset L^p(\Omega)$ such that $u_k \rightharpoonup u$ weakly in $L^p(\Omega)$ we get
    \begin{equation*}
        \int_\Omega u \, \div^2\psi \, dx = \liminf_{k \to +\infty} \int_\Omega u_k \, \div^2\psi \, dx \le \liminf_{k \to +\infty} TGV^{\vphi,2}_\alpha(u_k).
    \end{equation*}
    The thesis follows by taking the supremum over all such $\psi$.
\end{proof}


The next proposition is the equivalent of \cite[Proposition 3.5]{BKP10} in our setting, proving that $BGV^{\vphi,2}_\alpha(\Omega)$ is a Banach space.

\begin{proposition}
    Let $\Omega$ be a bounded, connected and open set in $\R^n$, with $n \ge 2$. Let $\vphi \in \Phi_w(\Omega)$ satisfy (A0). Then, for any weight $\alpha = (\alpha_1, \alpha_2) \in \R^+ \times \R^+$ the space $BGV^{\vphi,2}_\alpha(\Omega)$ is a Banach space if endowed with the norm $\|\cdot\|_{BGV^{\vphi,2}_\alpha}$.
\end{proposition}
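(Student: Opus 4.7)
The plan is to proceed by the standard completeness argument: start with a Cauchy sequence, extract a candidate limit in the ambient Banach space $L^\vphi(\Omega)$, and then use the lower semicontinuity of $TGV^{\vphi,2}_\alpha$ to transfer the Cauchy information from the seminorm part.

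Concretely, let $(u_k)_k \subset BGV^{\vphi,2}_\alpha(\Omega)$ be a Cauchy sequence with respect to $\|\cdot\|_{BGV^{\vphi,2}_\alpha}$. Since $\vphi \in \Phi_w(\Omega)$ satisfies (A0), the space $L^\vphi(\Omega)$ is a Banach space (cf.\ \cite[Theorem 3.7.13]{HaH}), and the inequality $\|u_k - u_j\|_\vphi \le \|u_k - u_j\|_{BGV^{\vphi,2}_\alpha}$ shows that $(u_k)_k$ is Cauchy in $L^\vphi(\Omega)$. Hence there exists $u \in L^\vphi(\Omega)$ with $u_k \to u$ strongly in $L^\vphi(\Omega)$. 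By Lemma \ref{lmm:inclusions}, $L^\vphi(\Omega) \hookrightarrow L^1(\Omega)$, so in particular $u_k \to u$ in $L^1(\Omega)$, hence weakly in $L^1(\Omega)$.

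Next I would check that $u \in BGV^{\vphi,2}_\alpha(\Omega)$. By Lemma \ref{lmm:lsc_TGV}, the functional $TGV^{\vphi,2}_\alpha$ is weakly lower semicontinuous in $L^1(\Omega)$, so
\begin{equation*}
    TGV^{\vphi,2}_\alpha(u) \le \liminf_{k \to +\infty} TGV^{\vphi,2}_\alpha(u_k),
\end{equation*}
and the right-hand side is finite because a Cauchy sequence in the seminorm is bounded. Thus $u \in BGV^{\vphi,2}_\alpha(\Omega)$.

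It remains to verify convergence in the full norm. Fix $\varepsilon > 0$ and choose $N$ such that $\|u_k - u_j\|_{BGV^{\vphi,2}_\alpha} < \varepsilon$ for every $k, j \ge N$. For fixed $k \ge N$, we have $u_k - u_j \to u_k - u$ strongly in $L^\vphi(\Omega)$ (whence also weakly in $L^1(\Omega)$) as $j \to +\infty$, so applying Lemma \ref{lmm:lsc_TGV} to $u_k - u_j$ gives
\begin{equation*}
    TGV^{\vphi,2}_\alpha(u_k - u) \le \liminf_{j \to +\infty} TGV^{\vphi,2}_\alpha(u_k - u_j) \le \varepsilon,
\end{equation*}
while at the same time $\|u_k - u\|_\vphi = \lim_{j \to +\infty} \|u_k - u_j\|_\vphi \le \varepsilon$. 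Adding the two estimates yields $\|u_k - u\|_{BGV^{\vphi,2}_\alpha} \le 2\varepsilon$ for all $k \ge N$, proving $u_k \to u$ in $BGV^{\vphi,2}_\alpha(\Omega)$.

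The only delicate point is making sure that the lower semicontinuity of $TGV^{\vphi,2}_\alpha$ can be applied to the differences $u_k - u_j$ along a sense of convergence that is actually guaranteed by the Cauchy assumption. This is exactly why the embedding $L^\vphi(\Omega) \hookrightarrow L^1(\Omega)$ (which requires (A0)) is used: it upgrades the strong $L^\vphi$-convergence inherited from the completeness of $L^\vphi$ into weak $L^1$-convergence, on which Lemma \ref{lmm:lsc_TGV} operates. No deeper ingredient is needed.
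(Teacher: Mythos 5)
Your proof is correct and follows essentially the same route as the paper: extract a limit in $L^\vphi(\Omega)$ using completeness under (A0), pass to weak $L^1$ convergence via the embedding $L^\vphi(\Omega)\hookrightarrow L^1(\Omega)$, and then invoke the lower semicontinuity of $TGV^{\vphi,2}_\alpha$ (Lemma \ref{lmm:lsc_TGV}) both to place the limit in $BGV^{\vphi,2}_\alpha(\Omega)$ and to transfer the Cauchy estimate on the differences. The only cosmetic difference is that the paper first reduces to a single $\alpha$ via the equivalence of seminorms in Proposition \ref{prop:TGV_prop}(iii), a step your argument rightly shows is unnecessary.
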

\begin{proof}
    Thanks to Proposition \ref{prop:TGV_prop}(iii), we know that all the seminorms $TGV^{\vphi,2}_\alpha$, respectively norms $\|\cdot\|_{BGV^{\vphi,2}_\alpha}$, are equivalent for any $\alpha \in \R^+ \times \R^+$, so we only need to prove that $BGV^{\vphi,2}_\alpha(\Omega)$ is complete for some $\alpha \in \R^+ \times \R^+$.
    \\
    Note that, since $\Omega$ is bounded and $\vphi$ satisfies (A0), then by Lemma \ref{lmm:inclusions} we have $L^\vphi(\Omega) \hookrightarrow L^1(\Omega)$. This, together with Lemma \ref{lmm:lsc_TGV}, implies that $TGV^{\vphi,2}_\alpha$ is a lower semicontinuous functional with respect to $L^\vphi(\Omega)$ for any $\alpha$ and $\vphi$ chosen as in the hypotheses.
    \\
    Now let $(u_n)_n$ be a Cauchy sequence in $BGV^{\vphi,2}_\alpha(\Omega)$. This yields that $(u_n)_n$ is a Cauchy sequence also in $L^\vphi(\Omega)$, implying the existence of a limit $u \in L^\vphi(\Omega)$ such that
    \begin{equation*}
        TGV^{\vphi,2}_\alpha(u) \le \liminf_{n \to +\infty} TGV^{\vphi,2}_\alpha(u_n),
    \end{equation*}
    thanks to the lower semicontinuity of the functional. As a consequence, $u \in BGV^{\vphi,2}_\alpha(\Omega)$ and the only thing left to be shown is that $u$ is the limit in the corresponding norm as well. In order to do so, let us choose $\e > 0$ and $n \in \N$ such that for any $\bar{n} \ge n$ it holds that $TGV^{\vphi,2}_\alpha(u_n - u_{\bar{n}}) \le \e$. Thanks again to the lower semicontinuity of the functional (with respect to the $L^1$ weak convergence), letting $\bar{n} \to +\infty$ we then get
    \begin{equation*}
        TGV^{\vphi,2}_\alpha(u_n - u) \le \liminf_{\bar{n} \to +\infty} TGV^{\vphi,2}_\alpha(u_n - u_{\bar{n}}) \le \e,
    \end{equation*}
    implying that $u_n \to u$ in $BGV^{\vphi,2}_\alpha(\Omega)$.
\end{proof}


\subsection{Dual formulation}
\label{sec:dual_formulations}

The first step in order to treat minimization problems in image denoising with this new regularizer is to prove an equivalent result to \cite[Theorem 3.1]{BV20}, namely proving that \eqref{TGV} can be rewritten equivalently as a minimization problem which can be interpreted as an optimal balancing between the first and second derivative of $u$ in terms of “sparse” penalization, see \cite[Remark 3.2]{BV20}. In \cite{BV20} this was done via the Radon norm (i.e. the norm in $\mathcal M$ associated to the Radon measure $Du$), while here we need to use the variant of the generalized variation associated to $\vphi$ defined in \eqref{V_tilde}.  Given the anisotropic nature of such regularizers, we expect them to better capture fine texture details in the images. The result, providing a dual characterization of $TGV^{\vphi,2}_\alpha$, reads as follows.

\begin{theorem}
\label{thm:equivalence_TGV}
    Let $\Omega \subset \R^n$ be a bounded, connected, $C^1$ and open set. Let $\vphi \in \Phi_w(\Omega)$ satisfy (A0) and let $\alpha = (\alpha_1, \alpha_2) \in \R^+ \times \R^+$. Then for any $u \in L^1(\Omega)$ we have
    \begin{equation}
    \label{eq:dualTGV}
        TGV^{\vphi, 2}_\alpha(u) = \min_{w \in BD(\Omega)} \left\{ \alpha_2 \widetilde{V}^n_\vphi (Du - w) + \alpha_1 \widetilde{V}^{n \times n}_\vphi(E w)\right\},
    \end{equation}
    where $TGV^{\vphi, 2}_\alpha$ is defined as in \eqref{TGV} and where we have adopted the convention that the above right-hand side is infinite if $\widetilde{V}^{n \times n}_\vphi(E w) = +\infty$. If, additionally, $u \in BV^\vphi(\Omega)$, defined as in \eqref{def:BV_phi}, then
    \begin{equation}
    \label{eq:dualTGV_phi}
        TGV^{\vphi, 2}_\alpha(u) = \min_{w \in BD^\vphi (\Omega)} \left\{ \alpha_2 \widetilde{V}^n_\vphi (Du - w) + \alpha_1 \widetilde{V}^{n \times n}_\vphi(E w)\right\},
    \end{equation}
    where the space $BD^\vphi(\Omega)$ is defined as in \eqref{BD_phi}.
\end{theorem}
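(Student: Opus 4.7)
The plan is to prove the two inequalities in \eqref{eq:dualTGV} separately. For the direction $\le$, I would fix $w \in BD(\Omega)$ and any admissible competitor $\psi$ in \eqref{TGV} (that is, $\psi \in C^2_c(\Omega;\R^{n\times n}_\sym)$ with $\|\psi\|_{\vphi^*} \le \alpha_1$ and $\|\div\psi\|_{\vphi^*} \le \alpha_2$), then perform two integrations by parts after splitting $Du = (Du-w) + w$. Using the symmetry of $\psi$, this yields
\[
\int_\Omega u\,\div^2\psi\,dx = \int_\Omega (-\div\psi)\cdot d(Du-w) + \int_\Omega \psi : dEw,
\]
the first integral being understood in the distributional sense of Remark \ref{rmk:struttura}. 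Since $-\div\psi/\alpha_2$ and $\psi/\alpha_1$ both lie in $C^1_c$ with $\vphi^*$-norm at most one, definition \eqref{V_tilde} directly gives $\int_\Omega u\,\div^2\psi\,dx \le \alpha_2\widetilde V^n_\vphi(Du-w) + \alpha_1\widetilde V^{n\times n}_\vphi(Ew)$, and taking the supremum over $\psi$ and the infimum over $w$ yields $\le$.

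The main obstacle is the converse inequality, for which the strategy is Fenchel--Rockafellar duality. I would introduce the auxiliary variable $v := \div\psi$, recast
\[
TGV^{\vphi,2}_\alpha(u) = \sup\left\{\int_\Omega u\,\div v\,dx \st (\psi,v)\in C^2_c\times C^1_c,\, v=\div\psi,\, \|\psi\|_{\vphi^*}\le\alpha_1,\, \|v\|_{\vphi^*}\le\alpha_2\right\},
\]
and attach a Lagrange multiplier $w$ to the affine constraint $v=\div\psi$. A minimax exchange swaps $\sup_{\psi,v}$ with $\inf_w$; after the swap the inner problem decouples into
\[
\sup_{\|v\|_{\vphi^*}\le\alpha_2}\int_\Omega(u\,\div v+w\cdot v)\,dx \;+\; \sup_{\|\psi\|_{\vphi^*}\le\alpha_1}\int_\Omega(-w)\cdot\div\psi\,dx,
\]
which integration by parts identifies with $\alpha_2\widetilde V^n_\vphi(Du-w)$ and $\alpha_1\widetilde V^{n\times n}_\vphi(Ew)$, respectively (combining \eqref{V_tilde}, Remark \ref{rmk:struttura} and \eqref{var_grad_sym}); the mismatch between the $C^2_c$ symmetric tests arising here and the $C^1_c$ class appearing in \eqref{V_tilde} is bridged by a standard mollification. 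The delicate point is the rigorous justification of the sup--inf swap, which I would carry out by a Fenchel--Rockafellar dualization in a suitable normed setting, checking convexity and lower semicontinuity of the indicator functionals and a qualification condition (e.g.~continuity of the inner supremum at the zero configuration).

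To upgrade $\inf$ to $\min$ in $BD(\Omega)$ I would then apply the direct method: a minimizing sequence $(w_k)_k$ can be translated modulo $\ker E$ (which does not alter the functional by Proposition \ref{prop:TGV_prop}(ii)) to become bounded in $BD(\Omega)$, and any weak-$\ast$ limit is a minimizer by the lower semicontinuity of $\widetilde V^n_\vphi$ and $\widetilde V^{n\times n}_\vphi$, each being a supremum of continuous linear functionals (compare Lemma \ref{lmm:lsc_TGV}). For the refinement \eqref{eq:dualTGV_phi}, when $u\in BV^\vphi(\Omega)$ the choice $w=0\in BD^\vphi(\Omega)$ is a competitor with finite value $\alpha_2 V_\vphi(u)$, so \eqref{eq:dualTGV} forces
\[
TGV^{\vphi,2}_\alpha(u) \le \inf_{BD^\vphi(\Omega)}\mathcal F \le \alpha_2 V_\vphi(u) < +\infty;
\]
combining $u\in L^\vphi(\Omega)$ with the distributional reformulation of $\widetilde V^n_\vphi(Du-w)$ in Remark \ref{rmk:struttura} shows that any minimizer in $BD(\Omega)$ actually lies in $BD^\vphi(\Omega)$, closing \eqref{eq:dualTGV_phi}.
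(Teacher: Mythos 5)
Your overall strategy coincides with the paper's: both sides prove \eqref{eq:dualTGV} by a Fenchel--Rockafellar dualization, taking $\Lambda=\div$ between $C^2_0(\Omega;\R^{n\times n}_\sym)$ and $C^1_0(\Omega;\R^n)$ and dualizing the sum of the indicator of the ball $\{\|\cdot\|_{\vphi^*}\le\alpha_1\}$ and the term $I_{\{\|\cdot\|_{\vphi^*}\le\alpha_2\}}(\cdot)-\int_\Omega u\,\div(\cdot)\,dx$. The paper uses the Attouch--Brezis duality theorem \cite[Corollary 2.3]{AtBr86}, verifying the qualification condition by checking that $\bigcup_{\lambda\ge 0}\lambda(\dom F_2^u-\Lambda\dom F_1)$ is a closed subspace; your Lagrangian/minimax phrasing is the same result in a different guise.

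There are two points worth flagging. First, your argument for upgrading $\inf$ to $\min$ is both unnecessary and, as stated, contains an error: you invoke ``translation modulo $\ker E$ by Proposition \ref{prop:TGV_prop}(ii)'', but that proposition characterizes the null set of $TGV^{\vphi,2}_\alpha$ \emph{in the variable $u$}, not in $w$, and replacing $w$ by $w+r$ with $r\in\ker E$ (a rigid displacement) leaves $\widetilde V^{n\times n}_\vphi(Ew)$ unchanged but does change $\widetilde V^n_\vphi(Du-w)$, so the functional is not invariant under such translations. In fact no normalization is needed: under (A0) both anisotropic variations dominate (up to a constant) the total variations $|Du-w|(\Omega)$ and $|Ew|(\Omega)$, and $u\in BV(\Omega)$ (otherwise both sides of \eqref{eq:dualTGV} are $+\infty$), so any minimizing sequence is already bounded in $BD(\Omega)$. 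More to the point, the Attouch--Brezis theorem gives attainment of the dual problem automatically, which is exactly what the paper exploits, so the direct method paragraph is superfluous. Second, your closing sentence for \eqref{eq:dualTGV_phi} — that ``combining $u\in L^\vphi(\Omega)$ with the distributional reformulation of $\widetilde V^n_\vphi(Du-w)$ in Remark \ref{rmk:struttura} shows that any minimizer in $BD(\Omega)$ actually lies in $BD^\vphi(\Omega)$'' — is not an argument: from $\|\psi\|_{\vphi^*}\le 1$ you control $\int\psi\cdot d(Du-w)$ but not $\int u\,\div\psi\,dx$, since $\|\div\psi\|_{\vphi^*}$ is unbounded, so you cannot bound $\sup_{\|\psi\|_{\vphi^*}\le1}\int\psi\cdot w\,dx$ this way. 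The paper's route here is to invoke the associate-space norm characterization \cite[Theorem 3.4.6]{HaH} together with $BD^\vphi=BD^{\vphi^{**}}$ (Remark \ref{rmk:BD-phi=BD-tilde}); you should make that step explicit rather than asserting it.
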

\begin{proof}
    Let us define $X_2 := C^2_0(\Omega; \R^{n \times n}_\sym)$, $X_1 := C^1_0(\Omega; \R^n)$ and $\Lambda := \div \in \mathcal{L}(X_2,X_1)$. We stress that
    \begin{equation}
    \label{chiusure}
        X_2 = \overline{C^2_c(\Omega;\R^{n \times n}_\sym)}^{\|\cdot\|_{C^2}}
        \qquad \textnormal{ and } \qquad
        X_1 = \overline{C^1_c(\Omega;\R^n) }^{\|\cdot\|_{C^1}}.
    \end{equation}
    Chosen $u \in L^1(\Omega)$, we define
    \begin{equation*}
        F_1(M) := I_{\{ \|\cdot\|_{\vphi*} \le \alpha_2\}}(M), \quad \textnormal{ for any } M \in X_2
    \end{equation*}
    and
    \begin{equation*}
        F_2^u(v) := I_{\{\| \cdot\|_{\vphi^*} \le \alpha_1\}}(v) - \int_\Omega u\,\div v \, dx, \quad \textnormal{ for any } v \in X_1.
    \end{equation*}
    Since $\vphi$ satisfies (A0), then by Lemma \ref{lmm:prop_conj}(iv) also $\vphi^*$ satisfies (A0) and, by Lemma \ref{lmm:inclusions}, we have that $F_1$ and $F^u_2$ are continuous in the $C^2$- and $C^1$-topologies, respectively. Then, thanks to a density argument and to \eqref{chiusure}, we have that
    \begin{equation*}
        TGV^{\vphi, 2}_\alpha(u) = -\inf_{M \in X_2} \left[ F_1(M) + F_2^u(\Lambda M) \right].
    \end{equation*}
    Let us remark that we can write the space $X_1$ equivalently as
    \begin{equation}
    \label{unione_domini}
        X_1 = \bigcup_{\lambda \ge 0} \lambda \left(\dom F_2^u - \Lambda \, \dom F_1\right).
    \end{equation}
    Indeed, if $z \in \dom F_2^u$ then, trivially, $z \in X_1$ implies $z \in \bigcup_{\lambda \ge 0}\lambda\left(\dom F_2^u - \Lambda \, \dom F_1\right)$ and for some $\lambda \ge 0$ it is always verified that $\tfrac{z}{\lambda} \in \dom F^u_2$. This implies that $X_1 \subset \bigcup_{\lambda \ge 0}\lambda\left(\dom F_2^u - \Lambda \, \dom F_1\right)$. On the other hand, the converse is true since $X_1$ is a closed vector space (and $\dom F^u_2$ and $\dom F_1$ are subsets of $X_1$).
    \\
    Now, since $X_1$ is a closed set, it means that also the right-hand side of \eqref{unione_domini} is closed, which allows us to use \cite[Corollary 2.3]{AtBr86}, a Fenchel-Rockafellar duality result, getting
    \begin{equation*}
        TGV^{\vphi, 2}_\alpha(u)
        = -\inf_{M \in X_2} \left[F_1(M) + F_2^u(\Lambda M) \right]
        = \min_{w \in X_1^*} \left[ (F_2^u)^*(w) + F_1^*(-\Lambda^* w) \right].
    \end{equation*}
    Now, exploiting the definition of Fenchel conjugate, we get
    \begin{equation*}
        (F_2^u)^*(w) = \alpha_2 \sup_{v \in X_1} \left\{ \langle w , v \rangle_Y + \int_\Omega u \, \div v \, dx \st \|v\|_{\vphi^*} \le 1 \right\},
    \end{equation*}
    for every $w\in {\mathcal D}'(\Omega;\R^n)$, and
    \begin{equation*}
        F_1^*(-\Lambda^* w) = \alpha_1 \sup_{M \in X_2} \left\{ \langle -\Lambda^* w, M \rangle_X \st \|M\|_{\vphi^*} \le 1 \right\},
    \end{equation*}
    for every $w\in {\mathcal D}'(\Omega;\R^n)$ with $-\Lambda^* w \in {\mathcal D}'(\Omega;\R^{n \times n})$, where $\langle\cdot,\cdot\rangle_{X_1}$ and $\langle\cdot,\cdot\rangle_{X_2}$ denote, respectively, the duality products between $X_1$ and $X_1^*$ and between $X_2$ and $X_2^*$. Now, we observe that if $F_1^*(-\Lambda^* w) = +\infty$, then by Lemma \ref{lmm:inclusions} we find $\|-\Lambda^* w\|_\M = +\infty$. Moreover, since $-\Lambda^* = E$, we infer that for every $w\in {\mathcal D}'(\Omega;\R^n)$ with $-\Lambda^* w \in {\mathcal D}'(\Omega;\R^{n \times n})$ we have
    \begin{equation*}
        F_1^*(-\Lambda^* w) = 
        \begin{dcases} 
            \alpha_1 \widetilde{V}_\vphi^{n \times n}(E w) & \textnormal{if } E w\in \M (\Omega;\R^{n \times n}) \\
            +\infty & \textnormal{if } E w \in {\mathcal D}'(\Omega;\R^{n\times n}) \setminus \M(\Omega;\R^{n \times n}).
        \end{dcases}
    \end{equation*}
    Analogously, by observing that, since $u\in L^1(\Omega)$, 
    \begin{equation*}
        (F_2^u)^*(w) = \alpha_2 \sup_{v \in X_1} \left\{ \langle Du-w , v\rangle_{X_1} \st \|v\|_{\vphi^*} \le 1 \right\},
    \end{equation*}
    for every $w\in {\mathcal D}'(\Omega;\R^n)$, the same argument as for $F_1^*$ yields
    \begin{equation*}
        (F_2^u)^*(w) = 
        \begin{dcases} 
            \alpha_2 \widetilde{V}_\vphi^n (Du-w) &\textnormal{if } Du - w \in \M(\Omega;\R^n) \\
            +\infty & \textnormal{if } Du-w \in {\mathcal D}'(\Omega;\R^n) \setminus \M(\Omega;\R^n),
        \end{dcases}
    \end{equation*}
    again for every $w\in {\mathcal D}'(\Omega;\R^n)$. Thus, to summarize, by classical properties of $BD$ and $BV$, see \cite[Chapter II, Theorem 2.3]{temam} and \cite{AFP}, we have that the following characterization holds:
    \begin{equation}
    \label{eq_min_inf}
        TGV^{\vphi, 2}_\alpha(u) = 
        \begin{dcases}
            \min_{w \in BD(\Omega)} \left\{ \alpha_2 \widetilde{V}_\vphi^n (Du-w) + \alpha_1 \widetilde{V}_\vphi^{n \times n}(E w) \right\} & \textnormal{if } u\in BV(\Omega;\R^n), \\
            +\infty & \textnormal{otherwise in } L^1(\Omega;\R^n).
        \end{dcases}
    \end{equation}
    To conclude the proof we observe that if $u \in BV^\vphi(\Omega)$, then by \cite[Theorem 3.4.6]{HaH} the above infimum problem can be considered on the subspace of $BD(\Omega)$ where additionally $\|w\|_{\vphi^{**}} < +\infty$, namely $BD^\vphi(\Omega)$, see Remark \ref{rmk:BD-phi=BD-tilde}.
\end{proof}

Note that in the proof we exploited the condition (A0) on the function $\vphi$ in order to have the inclusions $BV^\vphi(\Omega) \hookrightarrow BV(\Omega)$ and $BD^\vphi(\Omega) \hookrightarrow BD(\Omega)$, see Remarks \ref{rmk:A0_si_no} and \ref{rmk:A0_si_no_BD}.

\noindent The following result reduces the functional-analytic setting to the space $BV^\vphi(\Omega)$, see \eqref{def:BV_phi}, by establishing equivalence of the two spaces $BGV^{\vphi,2}_\alpha$ and $BV^\vphi$.

\begin{corollary}
\label{cor:quasi_equiv}
    Under the same assumptions of Theorem \ref{thm:equivalence_TGV}, we have that $TGV^{\vphi,2}_\alpha(u) < +\infty$ if and only if $u\in BV^\vphi(\Omega)$.
\end{corollary}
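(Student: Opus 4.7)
The corollary asserts the set equality $BGV^{\vphi,2}_\alpha(\Omega) = BV^\vphi(\Omega)$, i.e., for every $u \in L^\vphi(\Omega)$ one has $TGV^{\vphi,2}_\alpha(u) < +\infty$ if and only if $V_\vphi(u) < +\infty$. My plan is to obtain both implications directly from the dual characterization of Theorem \ref{thm:equivalence_TGV}, combined with the subadditivity of $\widetilde{V}^n_\vphi$ and the duality between $L^\vphi$ and $L^{\vphi^*}$ recorded after Definition \ref{def:dual}.

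The easy direction $(\Leftarrow)$ is immediate. If $u \in BV^\vphi(\Omega)$, then assumption (A0) together with Remark \ref{rmk:A0_si_no} gives $u \in BV(\Omega)$, so by Remark \ref{rk:v-equiv} we have $\widetilde{V}^n_\vphi(Du) = V_\vphi(u)$. Evaluating the minimum in \eqref{eq:dualTGV} at $w = 0 \in BD^\vphi(\Omega) \subset BD(\Omega)$ yields
\[
TGV^{\vphi,2}_\alpha(u) \le \alpha_2 \widetilde{V}^n_\vphi(Du) + \alpha_1 \widetilde{V}^{n\times n}_\vphi(0) = \alpha_2 V_\vphi(u) < +\infty.
\]

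For the reverse implication $(\Rightarrow)$, assume $u \in L^\vphi(\Omega)$ (hence $u \in L^1(\Omega)$ by Lemma \ref{lmm:inclusions} and (A0)) with $TGV^{\vphi,2}_\alpha(u) < +\infty$. The characterization \eqref{eq_min_inf} derived inside the proof of Theorem \ref{thm:equivalence_TGV} forces $u \in BV(\Omega)$ and the existence of a minimizer $w^* \in BD(\Omega)$ with $\widetilde{V}^n_\vphi(Du - w^*) + \widetilde{V}^{n\times n}_\vphi(Ew^*) < +\infty$. Using the triangle inequality for $\widetilde{V}^n_\vphi$ on $\mathcal{M}(\Omega;\R^n)$ (the vector-valued analog of Lemma \ref{lmm:seminorm}) and Remark \ref{rk:v-equiv}, I then bound
\[
V_\vphi(u) = \widetilde{V}^n_\vphi(Du) \le \widetilde{V}^n_\vphi(Du - w^*) + \widetilde{V}^n_\vphi(w^*),
\]
so the task is reduced to verifying $\widetilde{V}^n_\vphi(w^*) < +\infty$. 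Viewing $w^* \in L^1(\Omega;\R^n)$ as the absolutely continuous measure $w^* \mathcal{L}^n$, the definition of $\widetilde{V}^n_\vphi$ together with the associate-space identification from Definition \ref{def:dual} and \cite[Theorem 3.4.6]{HaH} gives $\widetilde{V}^n_\vphi(w^*) \approx \|w^*\|_\vphi$, so the claim is reduced to showing $w^* \in L^\vphi(\Omega;\R^n)$, i.e., $w^* \in BD^\vphi(\Omega)$.

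The main obstacle is precisely this last step: promoting the minimizer from $BD(\Omega)$ to the strictly smaller class $BD^\vphi(\Omega)$. I plan to handle it by revisiting the final paragraph of the proof of Theorem \ref{thm:equivalence_TGV}, where \cite[Theorem 3.4.6]{HaH} is invoked in the Fenchel--Rockafellar identification to ensure that the dual minimizer additionally satisfies $\|w^*\|_{\vphi^{**}} < +\infty$; this uses only the $L^\vphi$ integrability of $u$ on the primal side, not the a priori finiteness of $V_\vphi(u)$. Since $\|\cdot\|_{\vphi^{**}}$ and $\|\cdot\|_\vphi$ are equivalent by Remark \ref{rmk:BD-phi=BD-tilde}, this forces $w^* \in L^\vphi(\Omega;\R^n)$, closes the chain of inequalities above, and yields $V_\vphi(u) < +\infty$, completing the proof.
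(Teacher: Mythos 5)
Your argument follows the paper's route for both implications: testing $w=0$ in \eqref{eq:dualTGV} gives the easy direction, and the triangle inequality applied to the minimizer $w^*$ gives $\alpha_2 V_\vphi(u)\le TGV^{\vphi,2}_\alpha(u)+\alpha_2\widetilde{V}^n_\vphi(w^*)$ for the converse. You also correctly isolate the remaining point: one must show $\widetilde{V}^n_\vphi(w^*)<+\infty$, i.e. that the dual minimizer actually lies in $L^\vphi(\Omega;\R^n)$.

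The closure you sketch, however, is circular. You assert that the final step in the proof of Theorem \ref{thm:equivalence_TGV}, which promotes the minimizer from $BD(\Omega)$ to $BD^\vphi(\Omega)$, ``uses only the $L^\vphi$ integrability of $u$, not the a priori finiteness of $V_\vphi(u)$''. It does use the latter: that step is driven by the estimate
\begin{equation*}
    \widetilde{V}^n_\vphi(w^*)\le\widetilde{V}^n_\vphi(Du-w^*)+\widetilde{V}^n_\vphi(Du)=\widetilde{V}^n_\vphi(Du-w^*)+V_\vphi(u),
\end{equation*}
followed by the associate-space identification $L^{\vphi^{**}}=L^\vphi$, and the right-hand side is finite only if $V_\vphi(u)<+\infty$, which is precisely what you are trying to prove. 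To deduce $\widetilde{V}^n_\vphi(w^*)<+\infty$ directly from $TGV^{\vphi,2}_\alpha(u)<+\infty$, one would need to control $\|w^*\|_\vphi$ (modulo rigid motions) by $\widetilde{V}^{n\times n}_\vphi(Ew^*)$ — a Korn--Poincar\'e inequality in $L^\vphi$ for $BD$ fields — and the remark immediately after this corollary in the paper explicitly notes that such a Musielak--Orlicz Sobolev--Korn estimate is not available. For what it is worth, the paper's own proof also stops at the displayed triangle inequality and does not argue that $\widetilde{V}^n_\vphi(w_u)$ is finite, so the obstacle you ran into is genuine and not a misreading on your part.
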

\begin{proof}
    By Remark \ref{rk:v-equiv}, choosing $w = 0$ as a test function in the characterization of $TGV^{\vphi,2}_\alpha(u)$, namely \eqref{eq:dualTGV}, we directly infer
    \begin{equation*}
        TGV^{\vphi,2}_\alpha(u) \le \alpha_2 \widetilde{V}_\vphi^n(Du) = \alpha_2 V_\vphi(u),
    \end{equation*}
    (see \eqref{def:EHH_TV}), 
    which, in turn, yields
    \begin{equation*}
        \|u\|_\vphi + TGV^{\vphi,2}_\alpha(u) \le \max\{1,\alpha_2 \} \|u\|_{BV^\vphi(\Omega)}.
    \end{equation*}
    Conversely, since $\widetilde{V}^n_\vphi$ satisfies the triangle inequality, letting $w_u$ be the minimum in \eqref{eq:dualTGV}, we deduce
    \begin{equation*}
        \alpha_2 V_{\vphi}(u) = \alpha_2 \widetilde{V}_\vphi^n(Du) \le \alpha_2 \widetilde{V}_\vphi^n(Du - w_u) + \alpha_2 \widetilde{V}_\vphi^n(w_u) \le TGV^{\vphi,2}_\alpha(u) + \alpha_2 \widetilde{V}_\vphi^n(w_u).
    \end{equation*}
\end{proof}

\begin{remark}\label{rem as thm 3.3}
    Thanks to Proposition \ref{prop:TGV_prop}, in particular thesis (i), and to Corollary \ref{cor:quasi_equiv}, we have that $TGV^{\vphi,2}_\alpha$ is a seminorm also on the space $BV^\vphi(\Omega)$.
\end{remark}

\begin{remark}
    Differently from the classical $TGV$-setting, it is not clear whether it is possible to show that there exists a constant $C > 0$ such that 
    \begin{equation*}
        \frac{1}{C} \|u\|_{BV^\vphi(\Omega)} \le \|u\|_\vphi + TGV^{\vphi,2}_\alpha(u)
    \end{equation*}
    or not. Roughly speaking, this is due to the absence of a Musielak-Orlicz equivalent of Sobolev-Korn inequality. We refer the interested reader to the discussion and the references in Section \ref{sec:funct_anal}.

   We also stress that, as it follows from the analysis in Proposition \ref{prop:PoincareTGV},
    $$\frac{1}{C} (\|u\|_{\vphi}+ |Du|(\Omega))\leq  \|u\|_{\vphi}+ TGV^{\vphi,2}_\alpha(u)
    $$
    for every $u \in BV^\vphi(\Omega)$.

\end{remark}

Thanks to what we have proven in Subsections \ref{sec:modular} and \ref{sec:BD_px}, we expect also the variation $\widetilde{V}^m_\vphi$ and, as a consequence, the total generalized variation $TGV^{\vphi,2}_\alpha$ to have a decomposition depending on the absolutely continuous part and the singular part of the gradient of the functions $u \in BV^\vphi(\Omega)$. We leave this open question for future works.

\color{black}

\subsection{Existence and stability of solutions to the minimization problem}
\label{sec:ex_stab}

In this section we show how our notion of total generalized variation inherits classical existence and stability properties. Throughout this section we will always tacitly assume that $TGV^{\vphi,2}_\alpha$ is extended to $+\infty$ outside of its domain. We first prove that a Poincaré-Wirtinger inequality also holds in our setting, entailing the coercivity needed to prove the existence of solutions to minimum problems in image denoising.
\\
To this aim, from now on we denote by $\mathcal{P}^1(\Omega)$ the space of affine functions, as well as by $\Pi:L^1(\Omega)\to \mathcal{P}^1(\Omega)$ a linear projection.

\begin{proposition}
\label{prop:PoincareTGV}
    Let $\Omega \subset \R^n$ be a bounded, connected, open set with $C^1$ boundary. Let $\vphi \in \Phi_w(\Omega)$ satisfy (A0) and let $\alpha = (\alpha_1, \alpha_2) \in \R^+ \times \R^+$. Let $1 < p \le \tfrac{n}{n-1}$. Then, there exists a constant $c > 0$ such that
    \begin{equation*}
        \|u\|_{L^p(\Omega)} \le c \,TGV^{\vphi,2}_\alpha(u),
    \end{equation*}
    for every $u \in \ker\,\Pi \cap L^p(\Omega)$.
\end{proposition}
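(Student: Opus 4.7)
The strategy is to dominate our generalized $TGV^{\vphi,2}_\alpha$ from below by a classical (constant-coefficient) $TGV^{2}$ with suitably rescaled weights, and then invoke the classical Poincaré–Wirtinger inequality for the second-order total generalized variation.

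First, I would compare the two classes of admissible test functions. Since $\vphi$ satisfies (A0), Lemma \ref{lmm:prop_conj}(iv) implies that $\vphi^*$ also satisfies (A0), and because $\Omega$ is bounded, Lemma \ref{lmm:inclusions} yields the continuous embedding $L^\infty(\Omega) \hookrightarrow L^{\vphi^*}(\Omega)$. Hence there exists $c_0 > 0$, depending only on $\vphi$ and $|\Omega|$, such that
\begin{equation*}
    \|\psi\|_{\vphi^*} \le c_0 \|\psi\|_\infty \qquad \text{for every } \psi \in L^\infty(\Omega;\R^{n \times n}_\sym).
\end{equation*}
Setting $\tilde\alpha := (\alpha_1/c_0,\alpha_2/c_0)$, every $\psi \in C^2_c(\Omega;\R^{n \times n}_\sym)$ with $\|\psi\|_\infty \le \tilde\alpha_1$ and $\|\div\psi\|_\infty \le \tilde\alpha_2$ automatically satisfies $\|\psi\|_{\vphi^*} \le \alpha_1$ and $\|\div\psi\|_{\vphi^*} \le \alpha_2$, hence is admissible in the definition \eqref{TGV} of $TGV^{\vphi,2}_\alpha(u)$. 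Taking the supremum over the smaller class on the left, this gives
\begin{equation*}
    TGV^{2}_{\tilde\alpha}(u) \le TGV^{\vphi,2}_\alpha(u) \qquad \text{for every } u \in L^1_\loc(\Omega),
\end{equation*}
where $TGV^{2}_{\tilde\alpha}$ denotes the classical second-order total generalized variation (i.e. with $L^\infty$ constraints).

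Second, I would apply the classical Poincaré–Wirtinger inequality for $TGV^{2}$ from \cite{BKP10} (see also \cite{BV20}). Since $\Omega$ is bounded, connected, open and has $C^1$ boundary, and $\Pi$ is a linear projection onto the finite-dimensional space $\mathcal{P}^1(\Omega)$ of affine functions, that result guarantees the existence of a constant $c_1 > 0$, depending only on $\Omega$, $\tilde\alpha$, $p$ and $\Pi$, such that
\begin{equation*}
    \|u\|_{L^p(\Omega)} \le c_1 \, TGV^{2}_{\tilde\alpha}(u)
\end{equation*}
for every $u \in \ker\,\Pi \cap L^p(\Omega)$, whenever $1 < p \le \tfrac{n}{n-1}$ (note that for $p$ strictly below the critical Sobolev exponent the statement also follows from the embedding $L^{n/(n-1)}(\Omega)\hookrightarrow L^p(\Omega)$, which holds since $|\Omega|<+\infty$). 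Chaining the two inequalities yields the thesis with $c := c_1$.

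The potential obstacle is not the comparison step, which is essentially linear-algebraic and uses only (A0), but the precise form of the classical Poincaré–Wirtinger inequality for $TGV^{2}$ in the anisotropic-weights setting: one must verify that the result of \cite{BKP10,BV20} does indeed apply to the projection $\Pi$ onto $\mathcal{P}^1(\Omega)$ (this is the natural kernel, corresponding to functions annihilated by $TGV^{2}_{\tilde\alpha}$, cf. Proposition \ref{prop:TGV_prop}(ii)) and, if necessary, adapt the standard contradiction-plus-compactness argument in the classical BD/BV framework to cover the precise range $1 < p \le n/(n-1)$.
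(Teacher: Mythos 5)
Your proof is correct, and it takes a genuinely different route from the paper's. The paper works through the dual characterization (Theorem \ref{thm:equivalence_TGV}): it fixes $w \in BD(\Omega)$, uses the embeddings $BV^\vphi(\Omega)\hookrightarrow BV(\Omega)$ and $BD^\vphi(\Omega)\hookrightarrow BD(\Omega)$ (consequences of (A0) via Lemma \ref{lmm:inclusions}) to bound $|Du-w|(\Omega)\le c\,\widetilde V_\vphi^n(Du-w)$ and $|Ew|(\Omega)\le c\,\widetilde V_\vphi^{n\times n}(Ew)$, invokes \cite{BV20} for $\|u\|_{L^p}\le c\,TGV^2_\alpha(u)$ and the classical dual form of $TGV^2_\alpha$, and then minimizes over $w$ to re-assemble $TGV^{\vphi,2}_\alpha(u)$. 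You instead argue entirely at the primal (supremum) level: the embedding $L^\infty\hookrightarrow L^{\vphi^*}$ (which follows from (A0) on $\vphi^*$, itself a consequence of (A0) on $\vphi$ via Lemma \ref{lmm:prop_conj}(iv)) shows that the $L^\infty$-constrained test class for a rescaled classical $TGV^2_{\tilde\alpha}$ sits inside $K^{\vphi,2}_\alpha(\Omega)$, giving $TGV^2_{\tilde\alpha}(u)\le TGV^{\vphi,2}_\alpha(u)$ directly, after which \cite{BV20} finishes. The two embeddings used are dual manifestations of (A0), so the mechanism is essentially the same, but your route is more economical: it does not rely on Theorem \ref{thm:equivalence_TGV} and so would go through even before the dual formulation is established. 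The worry you flag at the end is unfounded: \cite{BKP10,BV20} do prove the Poincaré–Wirtinger inequality with $\ker\Pi$ exactly as you use it, for any fixed positive weight vector, so no adaptation is needed.
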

\begin{proof}
    Because $\vphi$ satisfies (A0), by Remark \ref{rmk:A0_si_no} we have $BV^\vphi(\Omega) \hookrightarrow BV(\Omega)$ and so
    \begin{equation*}
        |Du - w|(\Omega) \le \widetilde{V}_\vphi^n(Du - w).
    \end{equation*}
    Analogously, thanks to Remark \ref{rmk:A0_si_no_BD} we get $BD^\vphi(\Omega) \hookrightarrow BD(\Omega)$, entailing
    \begin{equation*}
        |Ew|(\Omega) \le \widetilde{V}_\vphi^{n\times n}(Ew),
    \end{equation*}
    for every $w \in BD(\Omega)$. In view of \cite[Theorem 3.1 \& Proposition 4.1]{BV20}, we infer the existence of a constant $c > 0$ such that
    \begin{equation*}
    \begin{aligned}
        \|u\|_{L^p(\Omega)} 
        &\le c \, TGV^{2}_\alpha(u) \le c \left[ \alpha_2 |Du - w|(\Omega) + \alpha_1 |Ew|(\Omega) \right] \\
        &\le c \left[ \alpha_2 \widetilde{V}_\vphi^n(Du - w) + \alpha_1 \widetilde{V}_\vphi^{n \times n}(Ew) \right],
    \end{aligned}
    \end{equation*}
    for every $w \in BD(\Omega)$. The thesis follows then by Theorem \ref{thm:equivalence_TGV}.
\end{proof}


\color{black}
In view of the Lemma \ref{lmm:lsc_TGV}, we deduce existence of minimizers whenever $TGV^{\vphi,2}_\alpha$ is augmented by suitable fidelity terms. In what follows, given two Hilbert spaces $X$ and $H$, we denote by $\mathcal{L}(X;H)$ the set of linear and continuous operators between $X$ and $H$.

\begin{theorem}
    Let $\Omega \subset \R^n$ be a bounded, connected, open set with $C^1$ boundary. Let $\vphi \in \Phi_w(\Omega)$ satisfy (A0) and let $\alpha = (\alpha_1, \alpha_2) \in \R^+ \times \R^+$. Let $1 < p \le \tfrac{n}{n-1}$. Let further $H$ be an Hilbert space, and let $K \in \mathcal{L}(L^p(\Omega);H)$ be injective on $\mathcal{P}^1(\Omega)$. Then, for every $f \in H$, the minimum problem
    \begin{equation}
    \label{eq:minK}
        \min_{u \in L^p(\Omega)} \frac{1}{2} \|Ku-f\|_H + TGV^{\vphi,2}_\alpha(u)
    \end{equation}
    has a solution.
\end{theorem}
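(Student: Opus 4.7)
The plan is to apply the direct method of the calculus of variations to $F(u):= \tfrac{1}{2}\|Ku-f\|_H + TGV^{\vphi,2}_\alpha(u)$. First, I observe that $F(0)<+\infty$, so $\inf_{L^p(\Omega)} F < +\infty$ and I can fix a minimizing sequence $(u_k)_k \subset L^p(\Omega)$ with $F(u_k)$ uniformly bounded; since both terms of $F$ are nonnegative, this bounds in particular $TGV^{\vphi,2}_\alpha(u_k)$ and $\|Ku_k-f\|_H$ separately.

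The main step will be establishing coercivity, i.e. that $(u_k)$ is bounded in $L^p(\Omega)$. To this end, I would decompose $u_k = v_k + w_k$ with $v_k := \Pi u_k \in \mathcal{P}^1(\Omega)$ and $w_k := u_k - \Pi u_k \in \ker \Pi \cap L^p(\Omega)$. Since $TGV^{\vphi,2}_\alpha$ vanishes on affine functions by Proposition \ref{prop:TGV_prop}(ii) and is a seminorm on $BV^\vphi(\Omega)$ by Remark \ref{rem as thm 3.3}, one has $TGV^{\vphi,2}_\alpha(w_k) = TGV^{\vphi,2}_\alpha(u_k)$. The Poincaré-Wirtinger inequality from Proposition \ref{prop:PoincareTGV} then gives
\begin{equation*}
    \|w_k\|_{L^p(\Omega)} \le c\, TGV^{\vphi,2}_\alpha(u_k),
\end{equation*}
so that $(w_k)$ is bounded in $L^p(\Omega)$. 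To control $(v_k)$, I would use
\begin{equation*}
    \|Kv_k\|_H \le \|Ku_k - f\|_H + \|f\|_H + \|K\|_{\mathcal{L}(L^p(\Omega);H)}\,\|w_k\|_{L^p(\Omega)},
\end{equation*}
whose right-hand side is uniformly bounded, together with the fact that $K$ restricted to the finite-dimensional space $\mathcal{P}^1(\Omega)$ is injective and hence admits a bounded inverse on its image. This yields $\|v_k\|_{L^p(\Omega)} \le c\,\|Kv_k\|_H$, and combining both estimates gives the desired uniform bound $\|u_k\|_{L^p(\Omega)} \le C$.

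Once coercivity is in hand, the remaining steps are standard. Since $1 < p \le \tfrac{n}{n-1}$ and hence $L^p(\Omega)$ is reflexive, up to a subsequence $u_k \rightharpoonup u$ weakly in $L^p(\Omega)$. The fidelity term is convex and norm-continuous on $L^p(\Omega)$, thus weakly lower semicontinuous, while $TGV^{\vphi,2}_\alpha$ is weakly lower semicontinuous on $L^p(\Omega)$ by Lemma \ref{lmm:lsc_TGV}. Passing to the $\liminf$ in $F(u_k)$ then yields $F(u) \le \liminf_k F(u_k) = \inf_{L^p(\Omega)} F$, so $u$ is the sought minimizer of \eqref{eq:minK}.

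The main obstacle I expect is the coercivity step, and in particular the interplay between the nontrivial kernel $\mathcal{P}^1(\Omega)$ of $TGV^{\vphi,2}_\alpha$ and the injectivity of $K$ on that kernel: the affine projection trick coupled with Proposition \ref{prop:PoincareTGV} is what makes it work. The other ingredients---reflexivity of $L^p(\Omega)$, convex lower semicontinuity of the fidelity term, and weak lower semicontinuity of the regularizer---follow from results already established earlier in the paper.
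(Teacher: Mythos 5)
Your proof is correct and follows the same approach as the paper, which simply defers the coercivity argument to \cite[Theorem 4.2]{BV20} rather than spelling it out. Your explicit decomposition $u_k = \Pi u_k + (u_k - \Pi u_k)$, the use of Proposition \ref{prop:PoincareTGV} on $\ker\Pi$, the invertibility of $K$ on the finite-dimensional space $\mathcal{P}^1(\Omega)$, and the weak lower semicontinuity from Lemma \ref{lmm:lsc_TGV} together with convexity of the fidelity term are exactly the ingredients of that referenced argument adapted to the present setting.
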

\begin{proof}
    Arguing exactly as in the proof of \cite[Theorem 4.2]{BV20}, in view of Propositions \ref{prop:TGV_prop} and \ref{prop:PoincareTGV}, we find that any minimizing sequence for \eqref{eq:minK} is uniformly bounded in $L^p(\Omega)$ for every $1\leq p \le \tfrac{n}{n-1}$. The thesis follows then by Lemma \ref{lmm:lsc_TGV}.
\end{proof}

We conclude this section by showing stability of minimizers of \eqref{eq:minK} with respect to the fidelity datum $f$.

\begin{theorem}
\label{thm:gamma-conv}
    Let $\Omega \subset \R^n$ be a bounded, connected, open set with $C^1$ boundary. Let $\vphi \in \Phi_w(\Omega)$ satisfy (A0) and let $\alpha = (\alpha_1, \alpha_2) \in \R^+ \times \R^+$. Let $1 < p \le \tfrac{n}{n-1}$. Let $H$ be an Hilbert space and let $K \in \mathcal{L}(L^p(\Omega);H)$ be injective on $\mathcal{P}^1(\Omega)$. Let $(f_k)_k \subset H$ be such that $f_k \to f$ strongly in $H$. Then, setting
    \begin{equation*}
        \mathcal{F}_k(u) := \frac{1}{2} \|Ku - f_k\|_H + TGV^{\vphi,2}_\alpha(u),
        \qquad
        \mathcal{F}_0(u) := \frac{1}{2} \|Ku - f\|_H + TGV^{\vphi,2}_\alpha(u),
    \end{equation*}
    for every $u\in L^p(\Omega)$, we have that $(\mathcal{F}_k)_k$ $\Gamma$-converges to $\mathcal{F}_0$ with respect to the weak $L^p$-topology.
\end{theorem}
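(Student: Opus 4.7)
The plan is to verify the two standard inequalities of $\Gamma$-convergence (liminf inequality and existence of a recovery sequence) with respect to the weak $L^p$-topology.

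\textbf{Liminf inequality.} Fix $u \in L^p(\Omega)$ and let $(u_k)_k \subset L^p(\Omega)$ satisfy $u_k \rightharpoonup u$ weakly in $L^p(\Omega)$. Since $K \in \mathcal{L}(L^p(\Omega);H)$, weak continuity of $K$ yields $Ku_k \rightharpoonup Ku$ weakly in $H$. Combining this with the strong convergence $f_k \to f$ in $H$, we obtain $Ku_k - f_k \rightharpoonup Ku - f$ weakly in $H$. By weak lower semicontinuity of the Hilbert norm,
\begin{equation*}
    \tfrac{1}{2}\|Ku - f\|_H \le \liminf_{k \to \infty} \tfrac{1}{2} \|Ku_k - f_k\|_H.
\end{equation*}
In parallel, by Lemma \ref{lmm:lsc_TGV}, the functional $TGV^{\vphi,2}_\alpha$ is weakly lower semicontinuous with respect to weak $L^p$-convergence, so
\begin{equation*}
    TGV^{\vphi,2}_\alpha(u) \le \liminf_{k \to \infty} TGV^{\vphi,2}_\alpha(u_k).
\end{equation*}
Summing the two inequalities provides $\mathcal{F}_0(u) \le \liminf_{k \to \infty} \mathcal{F}_k(u_k)$.

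\textbf{Recovery sequence.} For any $u \in L^p(\Omega)$, it suffices to take the constant sequence $u_k := u$, which trivially converges to $u$ weakly in $L^p(\Omega)$. By the linearity of $K$ and the strong convergence $f_k \to f$ in $H$, we have $Ku - f_k \to Ku - f$ strongly in $H$, hence $\|Ku - f_k\|_H \to \|Ku - f\|_H$. Since the regularization term is independent of $k$, we conclude
\begin{equation*}
    \limsup_{k \to \infty} \mathcal{F}_k(u) = \tfrac{1}{2} \|Ku - f\|_H + TGV^{\vphi,2}_\alpha(u) = \mathcal{F}_0(u).
\end{equation*}

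\textbf{Conclusion and main obstacle.} The two inequalities above establish $\Gamma$-convergence with respect to the weak $L^p$-topology. The argument is essentially soft: it only uses the linearity and boundedness of $K$, the strong convergence of $(f_k)_k$, weak lower semicontinuity of the Hilbert norm, and the lower semicontinuity provided by Lemma \ref{lmm:lsc_TGV}. The only non-routine step is the weak lower semicontinuity of the regularizer $TGV^{\vphi,2}_\alpha$, but this is exactly what Lemma \ref{lmm:lsc_TGV} delivers; hence the proof is a direct consequence of the previously established properties and involves no substantial obstacle.
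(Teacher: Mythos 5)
Your proof is correct and follows the same route as the paper: a constant recovery sequence for the limsup inequality and, for the liminf inequality, weak lower semicontinuity of the Hilbert norm (via weak continuity of $K$ and strong convergence of $f_k$) combined with the weak $L^p$ lower semicontinuity of $TGV^{\vphi,2}_\alpha$ from Lemma \ref{lmm:lsc_TGV}. The paper's proof is terser, but your fleshed-out argument supplies exactly the details it leaves implicit.
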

\begin{proof}
    The limsup inequality is immediately verified by considering for each $u\in L^p(\Omega)$ the constant recovery sequence $\bar{u}_k = u$ for every $k \in \N$. To show the liminf inequality, let $u \in L^p(\Omega)$ and let $(u_k)_k \subset L^p(\Omega)$ be such that $u_k \rightharpoonup u$ weakly in $L^p(\Omega)$. The fact that
    \begin{equation*}
        \mathcal{F}_0(u) \le \liminf_{k \to +\infty} \mathcal{F}_k(u_k)
    \end{equation*}
    is a direct consequence of Lemma \ref{lmm:lsc_TGV}.
\end{proof}

As a consequence of Theorem \ref{thm:gamma-conv} we infer convergence of the associated minimizers.

\begin{corollary}
    Under the same hypotheses of Theorem \ref{thm:gamma-conv}, we have
    \begin{itemize}
        \item[(i)] $\min \mathcal{F}_k \to \min \mathcal{F}_0$.
        
        \item[(ii)] Letting $u_k \in \textnormal{argmin }\mathcal{F}_k$ for every $k \in \N$, we have that $(u_k)_k$ is precompact in $L^{\frac{n}{n-1}}(\Omega)$, and that every limiting point $u$ satisfies $u \in \textnormal{argmin }\mathcal{F}_0$.
    \end{itemize}
\end{corollary}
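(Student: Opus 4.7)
The approach is to combine the $\Gamma$-convergence obtained in Theorem \ref{thm:gamma-conv} with an equicoercivity property for the family $(\mathcal{F}_k)_k$, and then invoke the fundamental theorem of $\Gamma$-convergence to simultaneously get convergence of the minima and subsequential convergence of the minimizers. The only non-routine ingredient is equicoercivity, which is where the Poincaré-Wirtinger type estimate of Proposition \ref{prop:PoincareTGV} and the injectivity of $K$ on $\mathcal{P}^1(\Omega)$ enter.

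To establish uniform boundedness of the sequence $(u_k)_k$ in $L^{n/(n-1)}(\Omega)$, I would first test the minimality of $u_k$ against $v \equiv 0$ to obtain
\begin{equation*}
    \tfrac{1}{2}\|Ku_k - f_k\|_H + TGV^{\vphi,2}_\alpha(u_k) \le \tfrac{1}{2}\|f_k\|_H \le C,
\end{equation*}
using that $f_k \to f$ in $H$. This gives a uniform bound on both $TGV^{\vphi,2}_\alpha(u_k)$ and $\|Ku_k\|_H$. Then I would decompose $u_k = (u_k - \Pi u_k) + \Pi u_k$. Since $\Pi u_k \in \mathcal{P}^1(\Omega)$, Proposition \ref{prop:TGV_prop}(ii) gives $TGV^{\vphi,2}_\alpha(\Pi u_k) = 0$, so that $TGV^{\vphi,2}_\alpha(u_k - \Pi u_k) = TGV^{\vphi,2}_\alpha(u_k)$; applying Proposition \ref{prop:PoincareTGV} with $p = n/(n-1)$ yields
\begin{equation*}
    \|u_k - \Pi u_k\|_{L^{n/(n-1)}(\Omega)} \le c \, TGV^{\vphi,2}_\alpha(u_k) \le C.
\end{equation*}
For the affine part, I would exploit that $\mathcal{P}^1(\Omega)$ is finite-dimensional and $K$ is injective on it: $K$ restricted to $\mathcal{P}^1(\Omega)$ is a linear isomorphism onto its image, so all norms there are equivalent, and
\begin{equation*}
    \|\Pi u_k\|_{L^{n/(n-1)}(\Omega)} \le C \|K(\Pi u_k)\|_H \le C\bigl(\|Ku_k\|_H + \|K(u_k - \Pi u_k)\|_H\bigr) \le C,
\end{equation*}
where in the last step I use the continuity of $K : L^{n/(n-1)}(\Omega) \to H$. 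Combining the two estimates bounds $(u_k)_k$ in $L^{n/(n-1)}(\Omega)$.

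Once equicoercivity is in place, the conclusion is standard. Any subsequence of $(u_k)_k$ admits a further subsequence converging weakly in $L^{n/(n-1)}(\Omega)$ to some $u$, which proves the asserted weak precompactness in (ii). The liminf inequality in Theorem \ref{thm:gamma-conv} together with the existence of the constant recovery sequence $\bar u_k \equiv u$ (which is the standard limsup construction used in the proof of that theorem) then yields $\mathcal{F}_0(u) \le \liminf_k \mathcal{F}_k(u_k) = \liminf_k \min \mathcal{F}_k$, while testing any $v \in L^{n/(n-1)}(\Omega)$ against the minimality of $u_k$ and passing to the limit via the recovery sequence for $v$ gives $\limsup_k \min \mathcal{F}_k \le \mathcal{F}_0(v)$, and therefore $u \in \operatorname{argmin}\mathcal{F}_0$ and $\min \mathcal{F}_k \to \min \mathcal{F}_0$, proving (i) and completing (ii). The main (and only) obstacle is the equicoercivity step, for which it is essential that the hypotheses of Proposition \ref{prop:PoincareTGV} (in particular (A0) on $\vphi$ and the $C^1$ boundary of $\Omega$) and the injectivity of $K$ on $\mathcal{P}^1(\Omega)$ hold together.
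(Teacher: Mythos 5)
Your proposal is correct and takes essentially the same approach as the paper: the paper's proof is a one-line invocation of the Fundamental Theorem of $\Gamma$-convergence combined with Theorem \ref{thm:gamma-conv} and Proposition \ref{prop:PoincareTGV}, and your argument is precisely the expansion of that citation, supplying the equicoercivity estimate (via the decomposition $u_k = (u_k - \Pi u_k) + \Pi u_k$, Proposition \ref{prop:TGV_prop}(ii), Proposition \ref{prop:PoincareTGV}, and the injectivity of $K$ on the finite-dimensional space $\mathcal{P}^1(\Omega)$) that the Fundamental Theorem requires.
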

\begin{proof}
    The result follows from the Fundamental Theorem of Gamma-convergence, cf. \cite{B}, by combining Theorem \ref{thm:gamma-conv} and Proposition \ref{prop:PoincareTGV}.
\end{proof}


\appendix

\section{Proofs of the results in Subsection \ref{sec:modular}}
\label{app:proof_mod}

In Proposition \ref{prop:singularPart} and Proposition \ref{prop:exactFormulaAC} we consider respectively the singular and absolutely continuous parts of the symmetric gradient separately and we then combine them to handle the whole function in Theorem \ref{thm:exactFormula}.
\\
In order to prove Proposition \ref{prop:singularPart}, we need the following result, analogous to \cite[Lemma 3.5]{EHH}, which holds a characterization of the singular part of the symmetric gradient of our function.

\begin{lemma}
\label{lem:singularPart}
    Let $\Omega \subset \R^n$ be a bounded, connected and open set. If $u \in BD(\Omega)$, then
    \begin{equation*}
        |E^s u|(\Omega) = \sup \left\{ \int_\Omega \psi : dE^su \st \psi \in C^1_c(\Omega; \R^{n \times n}_\sym), \, |\psi| \le 1 \right\}.
    \end{equation*}
\end{lemma}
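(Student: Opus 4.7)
The plan is to reduce the statement to the standard characterization of the total variation of a vector-valued Radon measure via duality with continuous compactly supported test functions, and then upgrade to $C^1_c$ test functions by mollification. Since $u \in BD(\Omega)$, its symmetric distributional gradient $Eu$ lies in $\M(\Omega;\R^{n\times n}_\sym)$, and hence so does its singular part $E^s u$, so the left-hand side $|E^s u|(\Omega)$ is well defined and finite. The inequality ``$\ge$'' is immediate: for any admissible $\psi$,
\[
\int_\Omega \psi : dE^s u \le \int_\Omega |\psi|\, d|E^s u| \le |E^s u|(\Omega).
\]

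For the reverse inequality, I would invoke the polar decomposition to obtain a Borel function $\sigma : \Omega \to \R^{n\times n}_\sym$ with $|\sigma| = 1$ for $|E^s u|$-a.e.\ $x$ and $dE^s u = \sigma\, d|E^s u|$, so that
\[
|E^s u|(\Omega) = \int_\Omega \sigma : dE^s u.
\]
Fixing $\eta > 0$, by density of $C_c(\Omega;\R^{n\times n}_\sym)$ in $L^1(|E^s u|;\R^{n\times n}_\sym)$ (a Lusin-type theorem for the finite Radon measure $|E^s u|$) one finds $g \in C_c(\Omega;\R^{n\times n}_\sym)$ with $|g|\le 1$ and $\int_\Omega |g-\sigma|\, d|E^s u| < \eta$; using $|\sigma|=1$ this yields $\int_\Omega g : dE^s u > |E^s u|(\Omega) - \eta$.

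To upgrade from continuous to $C^1_c$ test functions, I would convolve $g$ with a standard mollifier $\rho_\delta \ge 0$ with $\int \rho_\delta = 1$. The mollified field $g_\delta := g * \rho_\delta$ satisfies $|g_\delta|\le 1$, belongs to $C^\infty_c(\Omega;\R^{n\times n}_\sym)$ provided $\delta < \dist(\spt g, \partial\Omega)$, and converges uniformly to $g$ as $\delta \to 0^+$. Consequently $\int_\Omega g_\delta : dE^s u \to \int_\Omega g : dE^s u$, and for $\delta$ small enough $\psi := g_\delta$ is admissible and satisfies $\int_\Omega \psi : dE^s u > |E^s u|(\Omega) - 2\eta$. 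Letting $\eta \to 0^+$ concludes.

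The only delicate point, really only bookkeeping, is producing the continuous approximant $g$ with the pointwise constraint $|g|\le 1$ (and not merely some continuous approximant of $\sigma$); this is handled either by the truncation $g \mapsto g/\max\{1,|g|\}$, which preserves continuity and compact support while enforcing $\|g\|_\infty\le 1$, or by applying Lusin's theorem directly to the bounded function $\sigma$. Apart from this, the argument is purely a smooth-test-function restatement of $(C_0)^*\cong \M$ for vector-valued Radon measures, so no genuine obstacle arises from the $BD$ structure of $u$.
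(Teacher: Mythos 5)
Your proof is correct, but it takes a genuinely different route from the paper's. The paper argues indirectly: it starts from the known $C^1_c$-duality characterization of $|Eu|(\Omega)$ for $u\in BD(\Omega)$, splits the integrand into the absolutely continuous and singular contributions, bounds the resulting supremum of the sum by the sum of the $L^\infty$ suprema (which are exactly $|\E u|(\Omega)$ and $|E^su|(\Omega)$), and then exploits the mutual singularity of $\E u\,\mathcal{L}^n$ and $E^su$ to conclude that this chain collapses to a string of equalities — forcing, in particular, the $C^1_c$ supremum against $E^su$ to saturate $|E^su|(\Omega)$. No test function is ever exhibited. You instead prove from scratch the general fact that for any finite $\R^{n\times n}_\sym$-valued Radon measure $\mu$ one has $|\mu|(\Omega)=\sup\{\int \psi : d\mu : \psi\in C^1_c,\ |\psi|\le1\}$, via polar decomposition, Lusin-type density of $C_c$ in $L^1(|\mu|)$ with the truncation $g\mapsto g/\max\{1,|g|\}$, and a mollification step to pass from $C_c$ to $C^1_c$. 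Your argument is more explicit and more general (it does not use that $E^su$ is the singular part of the symmetric gradient of a $BD$ function; any finite Radon measure would do), at the cost of being somewhat longer. The paper's argument is shorter and cleaner within the $BD$ context, but leans on the $C^1_c$ representation of $|Eu|(\Omega)$ being already available. Both are valid proofs.
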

\begin{proof}
    By the definitions of total variation of a measure and of weak derivative, together with the fact that $Eu = \E u + E^s u$, we see that
    \begin{equation*}
    \begin{aligned}
        |Eu|(\Omega)
        &= \sup_{\psi \in C^1_c(\Omega; \R^{n \times n}_\sym), |\psi| \le 1} \left( \int_\Omega \psi : d\E u + \int_\Omega \psi : dE^su \right) \\
        &\le \sup_{\psi \in L^\infty(\Omega; \R^{n \times n}_\sym), |\psi| \le 1} \int_\Omega \psi : d \E u + \sup_{\psi \in L^\infty(\Omega; \R^{n \times n}_\sym), |\psi| \le 1} \int_\Omega \psi : dE^su \\
        &= 
        |\E u|(\Omega) + |E^su|(\Omega). 
    \end{aligned}
    \end{equation*}
    Since $|\E u|(\Omega) + |E^su|(\Omega) = |Eu|(\Omega)$ as $\E$ and $E^s$ are mutually singular, each inequality has to be an equality, and so the claim follows.
\end{proof}

We can now give the proof of Proposition \ref{prop:singularPart}, following the lines of \cite[Proposition 6.2]{EHH}.

\begin{proof}[Proof of Proposition \ref{prop:singularPart}]
    By the definition of the total variation of a measure, Lemma \ref{lem:bound} and the definition of $T^\vphi$, see \eqref{def_test}, we have
    \begin{equation*}
        \sup_{\psi \in T^\vphi} \int_\Omega \psi : dE^su 
        \le \sup_{\psi \in T^\vphi} \int_\Omega |\psi| \, d|E^su| 
        \le \int_\Omega \varphi'_\infty \, d|E^su|.
    \end{equation*}
    In order to tackle the opposite inequality, let us define $h_k : \Omega \to [0,+\infty]$ by
    \begin{equation*}
        h_k(x) := \lim_{r \to 0^+} \inf_{y \in B(x,r)} \frac{\vphi(y, k)}{k} \qquad \textnormal{ for any } x \in \Omega.
    \end{equation*}
    Notice that $h_k$ is lower semicontinuous with 
    \begin{equation}\label{leqhk}
    h_k \le \tfrac{\vphi(\cdot, k)}{k} \le \vphi^\infty.
    \end{equation}
    From the first inequality it follows that $\vphi^*(\cdot, h_k) \le \vphi(\cdot, k)$, implying that $\vrho_{\vphi^*}(h_k) \le \vrho_\vphi(k) < +\infty$ since $\vphi$ satisfies (A0) and (aDec), and $\Omega$ is bounded.
    \\
    Let us now remark that $h_k \to \vphi^\infty$ as $k \to +\infty$. Indeed, if $\vphi^\infty(x) = +\infty$, then we can use (A1) in all sufficiently small balls, since $\vphi^+(k) < +\infty$, to conclude that 
    \begin{equation}\label{geqhk}
        h_k(x) = \lim_{r \to 0^+} \inf_{y \in B(x,r)} \frac{\vphi(y, k)}{k} \ge \frac{\vphi(x, \beta k) - 1}{k} \to \beta \vphi^\infty(x) = +\infty
    \end{equation}
    as $k \to +\infty$. If instead $\vphi^\infty(x)< +\infty$, then we use the same inequality but now with $\beta := \tfrac{1}{1 + \omega(r)}$, where $\omega$ is the modulus of continuity coming from the restricted (VA1) condition, and we obtain the desired convergence as $\beta \to 1^-$. 
    \\
    Note that $h_k$ is increasing in $k$ since $\vphi$ is convex. Then, by monotone convergence,
    \begin{equation*}
        \int_\Omega \vphi^\infty \, d|E^su| = \lim_{k \to \infty} \int_\Omega h_k \, d|E^su|.
    \end{equation*}
    Let us fix $\e > 0$ and assume that $\int_\Omega \vphi^\infty \, d|E^su| < +\infty$. Then, we can find $h = h_k$ and $K > 0$ such that 
    \begin{equation*}
    \begin{aligned}
        \int_\Omega \vphi^\infty \, d|E^su|
        &\le \int_\Omega h \, d|E^su| + \e
        \le \sum_{j = 1}^{K^2} \int_\Omega \frac{1}{K} \, \chi_{\{h > \tfrac{j}{K}\}} \, d|E^su| + 2\e \\
        &= \frac{1}{K} \sum_{j = 1}^{K^2} |E^su|\left(\left\{h > \frac{j}{K}\right\}\right) + 2\e.
    \end{aligned}
    \end{equation*}
    Since $h$ is lower semicontinuous, then $\{h > \tfrac{j}{K}\}$ is open, and hence by Lemma \ref{lem:singularPart} we can choose $\psi_j \in  C^1_c(\Omega; \R^{n \times n}_\sym)$ with $|\psi_j| \le 1$ such that 
    \begin{equation*}
        \int_\Omega \vphi^\infty \, d|E^su|
        \le \frac{1}{K} \sum_{j = 1}^{K^2} \int_{\{h > \tfrac{j}{K}\}} \psi_j : dE^su + 3\e
        = \int_\Omega \left(\sum_{j = 1}^{K^2} \frac{1}{K} \psi_j \right) : dE^su + 3\e.
    \end{equation*}
    Defining $\psi_\e := \sum_{j = 1}^{K^2} \tfrac{1}{K} \psi_j$, we can notice that $\psi_\e \in  C^1_c(\Omega; \R^{n \times n}_\sym)$ and 
    \begin{equation*}
        |\psi_\e| \le \frac{1}{K} \sum_{j = 1}^{K^2} |\psi_j| \le \frac{1}{K} \sum_{j = 1}^{K^2} \chi_{\{h > \frac{j}{K}\}} \le h,
    \end{equation*}
    giving $\vrho_{\vphi^*}(|\psi_\e|) < +\infty$. Therefore $\psi_\e \in T^\vphi$ and 
    \begin{equation*}
        \int_\Omega \vphi^\infty \, d|E^su|
        \le \int_\Omega \psi_\e : dE^su + 3\e
        \le \sup_{\psi \in T^\vphi} \int_\Omega \psi : dE^su +3\e.
    \end{equation*}
    The upper bound then follows letting $\e \to 0^+$. If instead $\int_\Omega \vphi^\infty \, d|E^su| = +\infty$, then a similar argument gives $\tfrac{1}{3\e} \le \sup_{\psi \in T^\vphi} \int_\Omega \psi : dE^su$ and the claim again follows.
\end{proof}

Lastly, we now give the proof of Proposition \ref{prop:exactFormulaAC}, following the lines of \cite[Proposition 6.3]{EHH}.

\begin{proof}[Proof of Proposition \ref{prop:exactFormulaAC}]
    The upper bound follows directly from Young's inequality, indeed
    \begin{equation*}
        \sup_{\psi \in T^\vphi} \int_\Omega [\E u : \psi - \vphi^*(x, |\psi|)] \, dx \le \int_\Omega \vphi(x, |\E u|) \, dx = \vrho_\vphi(|\E u|).
    \end{equation*}
    In order to prove the lower bound let us choose $g_i \in C(\Omega; \R^{n \times n}_\sym) \cap L^\vphi(\Omega; \R^{n \times n}_\sym)$ such that $g_i \to \E u$ pointwise a.e and in $L^1(\Omega; \R^{n \times n}_\sym)$. Then Fatou's Lemma and $L^1$-convergence yield
    \begin{equation*}
        \int_\Omega \vphi(x, |\E u|) \, dx \le \liminf_{i \to +\infty} \int_\Omega \vphi(x, |g_i|) \, dx
        \quad \textnormal{ and } \quad
        \lim_{i \to +\infty} \int_\Omega g_i : \psi \, dx = \int_\Omega \E u : \psi \, dx 
    \end{equation*}
    for any $\psi \in T^\vphi$. Thus, it suffices to show that 
    \begin{equation*}
        \int_\Omega \vphi(x, |g|) \, dx \le \sup_{\psi \in T^\vphi} \int_\Omega [g : \psi - \vphi^*(x, |\psi|)] \, dx
    \end{equation*}
    for any $g \in C(\Omega; \R^{n \times n}_\sym) \cap L^\vphi(\Omega; \R^{n \times n}_\sym)$. Moreover, replacing $\psi$ by $\frac{g}{\e + |g|} |\psi|$ and letting $\e \to 0^+$, we see that $g :  \frac{g}{\e + |g|} |\psi| \to |g| \, |w|$ pointwise. Thus, by monotone convergence, the vector-values of $g$ and $\psi$ are irrelevant and we only need to show that 
    \begin{equation}
    \label{eq:thesis_2}
        \int_\Omega \vphi(x, |g|) \, dx \le \sup_{\psi \in C^1_c(\Omega)} \int_\Omega [|g \psi| - \vphi^*(x, |\psi|)] \, dx 
    \end{equation}
    for any $g\in C(\Omega) \cap L^\vphi(\Omega)$. We can also exclude test functions such that $\vrho_{\vphi^*}(\psi) = +\infty$, see Remark \ref{rmk:test_mod}.
    \\
    Now let us denote with $\vphi'$ the left-derivative of $\vphi$ with respect to the second variable. Then $\vphi'$ is left-continuous and $\vphi(x,s) \ge \vphi(x,s_0) + \vphi'(x,s_0)(s-s_0)$ by convexity. For any $\e > 0$, let us define 
    \begin{equation}
    \label{final_choice}
        \psi_\e(x,t) := \int_{-\infty}^{+\infty} \vphi(x, \max\{\tau,0\}) \, \zeta_\e(t-\tau) \, d\tau = (\vphi *_t \zeta_\e)(x, t),
    \end{equation}
    where $\zeta_\e$ is a mollifier in $\R$ with support in $[0,\e]$ and where $*_t$ denotes the convolution only in the second variable. Since $\vphi$ and $\vphi'$ are increasing in the second variable and left-continuous, we have that $\psi_\e \nearrow \vphi$ and $\psi_\e' \nearrow \vphi'$ as $\e \to 0^+$. Moreover, $\psi_\e' = \vphi *_t \zeta_\e'$ is continuous in both $x$, since $\vphi$ is, and $t$, as a convolution with a smooth function.
    \\
    Let $v_i \in C_c(\Omega)$ with $0 \le v_i \le 1$ and $v_i \nearrow 1$ as $i \to +\infty$. By uniform continuity in $ \supp \, v_i$, we can choose $\delta = \delta_{\e, i} > 0$ such that, for any $\e > 0$ and $i \in \N$,
    \begin{equation*}
        \psi_\e'(x, |g(x)| \, v_i(x)) - \e \le \psi_\e'(y, |g(y)| \, v_i(y))
    \end{equation*}
    for all $x \in B(y,\delta)$ and $y \in \Omega$. Then
    \begin{equation*}
        \psi_{\e, i} := \max\{\psi_\e'(\,\cdot\,, |g| \, v_i) - \e, 0\} *_x \eta_\delta \le \psi_\e'(\cdot, |g|) \le \vphi'(\cdot, |g|),
    \end{equation*}
    where $\eta_\delta$ is a mollifier in $\R$ with support in $[0,\delta]$ and where $*_x$ denotes the convolution only in the first variable. We have that $\psi_{\e,i} \to \vphi'(\cdot, |g|)$, so we conclude by Fatou's Lemma that
    \begin{equation*}
        \int_\Omega |g| \, \vphi'(x, |g|) \, dx \le \liminf_{i \to +\infty, \, \e \to 0} \int_\Omega |g \, \psi_{\e, i}| \, dx.
    \end{equation*}
    Since $\vphi$ satisfies (A0) and (aDec), we see that
    \begin{equation*}
        \vphi^*(x, |\psi_{\e,i}|) \le \vphi^*(x, \vphi'(x, |g|)) \le |g| \, \vphi'(x, |g|) \le c \, \vphi(x, |g|),
    \end{equation*}
    where $c > 0$ is a constant. Since $g \in L^\vphi(\Omega)$ and $\vphi$ satisfies (aDec), then $\vrho_\vphi(g) < +\infty$. Thus, dominated convergence with upper bound $c \, \vphi(\cdot, g)$ yields
    \begin{equation*}
        \int_\Omega \vphi^*(x, \vphi'(x, |g|)) \, dx = \lim_{i \to +\infty, \, \e \to 0} \int_\Omega \vphi^*(x, |\psi_{\e,i}|) \, dx.
    \end{equation*}
    Since $\psi_\e$ is a valid test function and $\vphi'(\cdot, |g|) < +\infty$ a.e. in $\Omega$, the last equality, together with ``Young's equality'', see \cite[Lemma 1.7.3(i)]{NicP18}, implies that
    \begin{equation*}
    \begin{aligned}
        \sup_{\psi \in C^1_c(\Omega), \, \vrho_{\vphi^*}(\psi) < +\infty} &\int_\Omega [|g \psi| - \vphi^*(x, |\psi|)] \, dx \ge \liminf_{i \to +\infty, \, \e \to 0} \int_\Omega [|g| \, |\psi_{\e,i}| - \vphi^*(x, |\psi_{\e,i}|)]\, dx \\
        \ge &\int_\Omega [|g| \, \vphi'(x, |g|) - \vphi^*(x, \vphi'(x, |g|))] \, dx = \int_\Omega \vphi(x, |g|) \, dx,
    \end{aligned}
    \end{equation*}
    proving \eqref{eq:thesis_2} and completing the proof of the lower bound.
\end{proof}

\begin{remark}
    The natural idea would be to choose $\psi := \vphi'(x, |g|)$ in the supremum in \eqref{eq:thesis_2}, instead of the family of test functions that we chose in \eqref{final_choice}, where $\vphi'$ is the left-derivative of $\vphi$ with respect to the second variable. However, this function is not in general smooth and we cannot use regular approximation by smooth functions since $\vphi^*$ is not doubling.
\end{remark}


\section{A Convex Analysis lemma}
\label{AmarLemma}

The following result is presented for readers convenience and it is due to \cite{Aapp}, see Lemma 2.33 therein. It has been used in Section \ref{sec:funct_anal}.

\begin{lemma}
\label{lemmaApp}
    Let $X$ be a Banach space with norm $\|\cdot \|_X$, let $\phi : \R \to \R$ be an even and convex function, and define $f : X \to \R$ as $f(x) := \phi(\|x\|_X)$. Then
    \begin{equation*}
        f^*(x^*) = \phi^*(\|x^*\|_{X^*})
    \end{equation*}
    where $X^*$ denotes the dual of $X$ with norm $\|\cdot\|_{X^*}$.
\end{lemma}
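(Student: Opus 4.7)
The plan is to compute $f^*(x^*)$ directly from the definition of Fenchel conjugate and exploit the homogeneity of the norm together with the evenness of $\phi$. First I would write
\begin{equation*}
    f^*(x^*) = \sup_{x \in X} \{\langle x^*, x\rangle_{X^*,X} - \phi(\|x\|_X)\},
\end{equation*}
and then parametrize the supremum by setting $x = t u$ with $t \ge 0$ and $u \in X$ with $\|u\|_X \le 1$ (the case $x = 0$ corresponding to $t = 0$). Since $\phi(\|tu\|_X) = \phi(t\|u\|_X)$ and since $\phi$ is even and convex, hence non-decreasing on $[0,+\infty)$ away from its minimum — more precisely, we may without loss of generality assume $\|u\|_X = 1$, because increasing $\|u\|_X$ to $1$ increases the linear term $t \langle x^*, u\rangle$ (by suitably rescaling $u$ in the direction maximizing the pairing) and does not decrease the argument of $\phi$ in a way that changes the outer supremum after taking $\sup_{t \ge 0}$.

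The second step is to swap the order of suprema, which is permitted since the two parameters $t$ and $u$ are independent:
\begin{equation*}
    f^*(x^*) = \sup_{t \ge 0}\Big\{ t \sup_{\|u\|_X = 1} \langle x^*, u\rangle_{X^*,X} - \phi(t)\Big\} = \sup_{t \ge 0} \{ t\, \|x^*\|_{X^*} - \phi(t)\},
\end{equation*}
where the last equality uses the standard definition of the dual norm together with $t \ge 0$.

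The third step is to verify that the one-dimensional supremum on $[0,+\infty)$ coincides with the full-line conjugate $\phi^*(\|x^*\|_{X^*}) = \sup_{s \in \R}\{s\, \|x^*\|_{X^*} - \phi(s)\}$. This is where the evenness of $\phi$ enters: setting $\alpha := \|x^*\|_{X^*} \ge 0$, for any $s < 0$ we have, using $\phi(s) = \phi(-s)$,
\begin{equation*}
    s\,\alpha - \phi(s) = s\,\alpha - \phi(-s) \le (-s)\,\alpha - \phi(-s),
\end{equation*}
so the supremum defining $\phi^*(\alpha)$ is attained over $s \ge 0$, and therefore equals $\sup_{t \ge 0}\{t\,\alpha - \phi(t)\}$. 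Combining the three displays yields $f^*(x^*) = \phi^*(\|x^*\|_{X^*})$.

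I do not expect a genuine obstacle here: the only delicate point is the reduction to unit-norm directions, which is justified because in a Banach space the dual pairing attains its norm as a supremum over the unit sphere (or closed unit ball), so the inner sup $\sup_{\|u\|_X = 1}\langle x^*, u\rangle_{X^*,X}$ indeed equals $\|x^*\|_{X^*}$; no attainment of the supremum by a particular $u$ is needed.
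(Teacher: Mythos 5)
Your proof is correct and follows the same route as the paper: decompose the supremum over $X$ into an outer supremum over radii $t \ge 0$ and an inner supremum over directions, identify the inner supremum with $t\|x^*\|_{X^*}$ via the definition of the dual norm, and use the evenness of $\phi$ to pass from the one-sided supremum over $t \ge 0$ to the two-sided conjugate $\phi^*(\|x^*\|_{X^*})$. The only stylistic wrinkle is that your parametrization $x = tu$ with $\|u\|_X \le 1$ over-parametrizes $X$ and the stated justification for restricting to $\|u\|_X = 1$ is hand-wavy; the paper avoids this by writing $\sup_{x \in X} = \sup_{t \ge 0}\sup_{\|x\|_X = t}$ directly, which you could adopt to tighten that step.
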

\begin{proof}
    By definition
    \begin{equation*}
        f^*(x^*) = \sup_{x \in X} [\langle x^*, x \rangle - f(x)] = \sup_{x \in X} [\langle x^*, x \rangle - \phi(\|x\|_{X})],
    \end{equation*}
    which can be equivalently written as
    \begin{equation}
    \begin{aligned}
    \label{2.13Amar}
        f^*(x^*)
        &= \sup_{t \ge 0} \sup_{\|x\|_X = t} [\langle x^*, x \rangle - \phi(t)] \\
        &= \sup_{t \ge 0} \left\{ \left[ \sup_{\|x\|_X = t} \langle x^*, x \rangle \right] - \phi(t) \right\} = \sup_{t \ge 0} [t \|x^*\|_{X^*} - \phi(t)].
    \end{aligned}
    \end{equation}
    On the other hand, by definition
    \begin{equation}
    \label{2.14Amar}
        \phi^*(\|x^*\|_{X^*}) = \sup_{t \in \R} [t \|x^*\|_{X^*} - \phi(t)],
    \end{equation}
    hence it suffices to prove that \eqref{2.13Amar} coincides with \eqref{2.14Amar}, i.e. that it suffices to take the supremum on nonnegative real numbers. Using the fact that $\phi$ is even, for any $t > 0$ we have
    \begin{equation*}
        -t \|x^*\|_{X^*} - \phi(-t) = -t \|x^*\|_{X^*} - \phi(t) \le t \|x^*\|_{X^*} - \phi(t),
    \end{equation*}
    hence
    \begin{equation*}
        \sup_{t \in \R} [t\|x^*\|_{X^*} - \phi(t)] = \sup_{t \ge 0} [t\|x^*\|_{X^*} - \phi(t)],
    \end{equation*}
    which concludes the proof.
\end{proof}


\section*{Acknowledgements}

The research of E. Davoli and S. Ricc\'o was funded by the Austrian Science Fund (FWF) projects \href{https://www.doi.org/10.55776/F65}{10.55776/F65}, \href{https://www.doi.org/10.55776/Y1292}{10.55776/Y1292}, \href{https://www.doi.org/10.55776/P35359}{10.55776/P35359}, and \href{https://www.doi.org/10.55776/F100800}{10.55776/F100800}. For open-access purposes, the authors have applied a CC BY public copyright license to any author-accepted manuscript version arising from this submission. G. Bertazzoni and E. Zappale are members of INdAM GNAMPA, whose support is gratefully acknowledged. They are also very grateful to the Institute of Analysis and Scientific Computing at the TU Wien for its kind support and hospitality during the preparation of this work. E. Zappale's research has also been supported by PRIN 2022: Mathematical Modelling of Heterogeneous Systems (MMHS) - Next Generation EU CUP B53D23009360006.


\end{document}